\documentclass{article}
\usepackage[utf8]{inputenc}

\title{The over-topos at a model}
\author{Olivia Caramello \and  Axel Osmond}
\date{April 12, 2021}

\usepackage{amsmath}

\usepackage{comment}
\usepackage{leftidx}
\usepackage{multicol}
\usepackage{amssymb}
\usepackage{stmaryrd}
\usepackage{accents}

\usepackage{xfrac}
\usepackage{amsthm}
\usepackage{xcolor}
\usepackage{sectsty}
\usepackage{relsize}

\usepackage{indentfirst}
\usepackage{tikz}
\tikzset{shorten <>/.style={shorten >=#1,shorten <=#1}}

\usepackage{hyperref}
\hypersetup{
    colorlinks=true,
    linkcolor=blue,
    filecolor=magenta,      
    urlcolor=cyan,
}
\usepackage{cleveref}

\usetikzlibrary{matrix,arrows,decorations.pathmorphing}
\usepackage{tikz-cd}
\newcounter{nodemaker}
\setcounter{nodemaker}{0}

\tikzset{%
    symbol/.style={%
        draw=none,
        every to/.append style={%
            edge node={node [sloped, allow upside down, auto=false]{$#1$}}}
    }
}

\makeatletter
\newcommand{\bigdoublevee}{\big@doubleop{\bigvee}}
\newcommand{\bigdoublewedge}{\big@doubleop{\bigwedge}}
\newcommand{\big@doubleop}[1]{%
  \DOTSB\mathop{\mathpalette\big@doubleop@aux{#1}}\slimits@
}

\newcommand\big@doubleop@aux[2]{%
  \sbox\z@{$\m@th#1#2$}%
  \makebox[1.35\wd\z@][s]{$\m@th#1#2\hss#2$}%
}
\makeatother

\makeatletter
\newcommand*{\doublerightarrow}[2]{\mathrel{
  \settowidth{\@tempdima}{$\scriptstyle#1$}
  \settowidth{\@tempdimb}{$\scriptstyle#2$}
  \ifdim\@tempdimb>\@tempdima \@tempdima=\@tempdimb\fi
  \mathop{\vcenter{
    \offinterlineskip\ialign{\hbox to\dimexpr\@tempdima+1em{##}\cr
    \rightarrowfill\cr\noalign{\kern.5ex}
    \rightarrowfill\cr}}}\limits^{\!#1}_{\!#2}}}
\newcommand*{\triplerightarrow}[1]{\mathrel{
  \settowidth{\@tempdima}{$\scriptstyle#1$}
  \mathop{\vcenter{
    \offinterlineskip\ialign{\hbox to\dimexpr\@tempdima+1em{##}\cr
    \rightarrowfill\cr\noalign{\kern.5ex}
    \rightarrowfill\cr\noalign{\kern.5ex}
    \rightarrowfill\cr}}}\limits^{\!#1}}}
\makeatother

\usepackage{geometry}
\geometry
 {a4paper,
 total={150mm,257mm},
 left=30mm,
 top=20mm}

\newtheorem{theorem}{Theorem}[section]
\theoremstyle{proposition}
\newtheorem{proposition}[theorem]{Proposition}
\newtheorem{corollary}[theorem]{Corollary}
\newtheorem{corollary'}[theorem]{Corollary}
\newtheorem{lemma}[theorem]{Lemma}
\theoremstyle{definition}
\newtheorem{definition}[theorem]{Definition}

\theoremstyle{remark}
\newtheorem*{remark}{Remark}

\theoremstyle{remarks}
\newtheorem*{remarks}{Remarks}

 \newcommand{\Lex}
 {{\bf Lex}}

 \newcommand{\GTop}
 {{\bf GTop}}
 
 \newcommand{\Geom}
 {{\bf Geom}}
 
 \newcommand{\Cat}
 {{\bf Cat}}

\newcommand{\cod}
 {{\rm cod}}

\newcommand{\ac}  
{`}

\newcommand{\comp}
 {\circ}

\newcommand{\Cont}
 {{\bf Cont}}

\newcommand{\dom}
 {{\rm dom}}
 
\newcommand{\fp}
{{\rm fp }}

\newcommand{\y}
{{\rm y }}

\newcommand{\li}
{{\textup{lim } }}

\newcommand{\oplaxlim}
{{\textup{oplaxlim } }}

\newcommand{\oplaxcolim}
{{\textup{oplaxcolim } }}

\newcommand{\lan}
{{\textup{lan } }}

\newcommand{\ran}
{{\textup{ran } }}

\newcommand{\colim}
{{\textup{colim}}}

\newcommand{\comma}[2]
{\mbox{$(#1\!\downarrow\!#2)$}}

\newcommand{\empstg}
 {[\,]}

\newcommand{\epi}
 {\twoheadrightarrow}

\newcommand{\hy}
 {\mbox{-}}

\newcommand{\im}
 {{\rm Im}}

\newcommand{\imp}
 {\!\Rightarrow\!}

\newcommand{\Ind}
 {{\rm Ind}}
 
 \newcommand{\Pro}
 {{\rm Pro}}

\newcommand{\mono}
 {\rightarrowtail}

\newcommand{\ob}
 {{\rm ob}}
 
 \newcommand{\Hom}
 {{\rm Hom}}

\newcommand{\op}
 {^{\rm op}}
 
 \newcommand{\pt}
 {{\bf pt}}

\newcommand{\Set}
 {{\bf Set }}

\newcommand{\Sh}
 {{\bf Sh}}
 
 \newcommand{\St}
 {{\bf St}}

\newcommand{\Desc}
 {{\rm Desc}}

\newcommand{\Sub}
 {{\rm Sub}}

\sectionfont{\large}
\subsectionfont{\normalsize}

\begin{document}
\maketitle

\begin{abstract}
    With a model of a geometric theory in an arbitrary topos, we associate a site obtained by endowing a category of generalized elements of the model with a Grothendieck topology, which we call the antecedent topology. Then we show that the associated sheaf topos, which we call the over-topos at the given model, admits a canonical totally connected morphism to the given base topos and satisfies a universal property generalizing that of the colocalization of a topos at a point. We first treat the case of the base topos of sets, where global elements are sufficient to describe our site of definition; in this context, we also introduce a geometric theory classified by the over-topos, whose models can be identified with the model homomorphisms towards the (internalizations of the) model. Then we formulate and prove the general statement over an arbitrary topos, which involves the stack of generalized elements of the model. Lastly, we investigate the geometric and $2$-categorical aspects of the over-topos construction, exhibiting it as a bilimit in the bicategory of Grothendieck toposes.
\end{abstract}

\section*{Notation}

The notation employed in the paper will be standard; in particular,

\begin{itemize}
    \item We shall denote by $\Set$ the category of sets (within a fixed model of set theory).
    
    \item Given a geometric theory $\mathbb T$, we shall denote by $({\cal C}_{\mathbb T}, J_{\mathbb T})$ its geometric syntactic site and by   $\Set[{\mathbb T}]$ its classifying topos, which, as is well-known, can be represented as $\Sh({\cal C}_{\mathbb T}, J_{\mathbb T})$ (for background on classifying toposes, the reader may refer to \cite{OCBook}).
    
    \item For any geometric theory $\mathbb T$, we denote by ${\mathbb T}[{\cal E}]$ the category of $\mathbb T$-models in a geometric category $\cal E$ and model homomorphisms between them. For any $\cal E$, we have an equivalence ${\mathbb T}[{\cal E}]\simeq \textup{\bf Cart}_{J_{\mathbb T}}({\cal C}_{\mathbb T}, {\cal E})$ between ${\mathbb T}[{\cal E}]$ and the category of cartesian (that is, finite-limit-preserving) functors ${\cal C}_{\mathbb T}\to {\cal E}$ which are $J_{\mathbb T}$-continuous (that is, which send $J_{\mathbb T}$-covering families to covering families in $\cal E$). The functor ${\cal C}_{\mathbb T}\to {\cal E}$ corresponding to a $\mathbb T$-model $M$ in $\cal E$ will be denoted by $F_{M}$; it sends any geometric formula $\{\vec{x}. \phi\}$ over the signature of $\mathbb T$ to its interpretation $[[\vec{x}. \phi]]_{M}$ in $M$, and acts accordingly on arrows (for more details, see, for instance, Chapter 1 of \cite{OCBook}).
\end{itemize}

\section*{Introduction}

The main topic of this paper is the construction of a topos classifying model homomorphisms towards (the internalizations of) a fixed model of a geometric theory in a given Grothendieck topos. More precisely, the desired universal property can be formulated as follows. Let $ \mathbb{T}$ a geometric theory over a signature $\mathcal{L}$, whose geometric syntactic site will be denoted $ (\mathcal{C}_\mathbb{T}, J_\mathbb{T})$, $\mathcal{E}$ a Grothendieck topos and $M$ a $\mathbb{T}$-model in $\mathcal{E}$. We want to construct the \emph{$\mathbb{T}$-over-topos} associated to $M$, that is, a geometric morphism $ u_{M} : \mathcal{E}[M] \rightarrow \mathcal{E} $ satisfying the universal property that for any $ \mathcal{E}$-topos $ g : \mathcal{G} \rightarrow \mathcal{E} $ one has an equivalence of categories
\[ \textup{\bf Geom}_\mathcal{E} [g, u_{M}] \simeq \mathbb{T}[\mathcal{G}]/g^*(M). \]
In words, we want $ \mathcal{E}[M]$ to classify the theory of $ \mathcal{L}$-structures homomorphisms from a $\mathbb{T} $-model to (internalizations of) $M$. In some sense, this is a way of forcing $M$ to become terminal amongst $ \mathbb{T}$-models. In this paper, we explicitly construct a site of definition for the over-topos from model-theoretic data, and describe both the logical and geometrical aspects of this construction.\\

In section $1$, we restrict to the case of a set-based model, where the construction of our site of definition for the over-topos simplifies thanks to specific properties of the terminal object of the topos $\Set$ of sets.\\

In section $2$, in preparation for the generalization of our construction to an arbitrary topos, we introduce a number of stacks that will be used in the sequel. Of particular relevance for our purposes is the notion of \emph{lifted topology} on a category of the form $(1_{\cal F}\downarrow f^{\ast})$, where $f:{\cal F}\to {\cal E}$ is a geometric morphism, as the smallest topology which makes both projection functors to $\cal E$ and $\cal F$ comorphisms to the canonical sites on $\cal E$ and $\cal F$; we provide a fully explicit description of this topology by providing a basis for it.\\

In section $3$, we generalize the construction of the over-topos to a model in an arbitrary Grothendieck topos. For this, we construct a canonical stack associated with this model and apply Giraud's general construction of the classifying topos of a stack to prove the desired universal property. \\ 

In section $4$ we investigate the over-topos construction from a bicategorical point of view, relating it with classical constructions in the bicategory of Grothendieck toposes, as well as with the notion of totally connected topos. In the particular case of a set-based model, that is, of a point $f_M: \Set \rightarrow \Set[\mathbb{T}]$ of the classifying topos for $\mathbb T$, the $\mathbb{T}$-over-topos at $M$ has as category of points the comma category $(\mathbb{T}[\Set]\downarrow M)$ and is shown to coincide with the colocalization of $\Set[\mathbb{T}]$ at $f_{M}$. This construction is dual, in a sense, to the notion of \emph{Grothendieck-Verdier localization} at a point, which classifies morphisms whose \emph{domain} is a fixed model. \\

\section{Over-topos of a set-based $\mathbb{T}$-model}

We suppose in this section that $ \mathcal{E}$ is equal to the topos $\Set$ of sets (within a fixed model of set theory). We first list some specific properties of the terminal object $1$ of $ \Set$, we are going to make use of in this section, and which may fail in arbitrary toposes, as we are going to see in section 3 which will addres s the general case. First, $ \Set$ is generated under coproducts by $ 1$, that is, for any set $ X $ one has
\[ X \simeq \coprod_{\textup{Hom}_{\Set}(1,X)} 1  \]
Moreover, 1 is \emph{projective}, that is, any epimorphism $ X \twoheadrightarrow 1$ admits a section $ 1 \rightarrow X$, and \emph{indecomposable}, which means that for any arrow from $1$ to a coproduct $ \coprod_{i \in I} X_i$, there is a section for at least one $i \in I$:
\[ 
\begin{tikzcd}
X_i \arrow[r, tail]                       & \underset{i \in I}{\coprod} X_i  \\
1 \arrow[r, equal] \arrow[u, dashed] & 1 \arrow[u]               
\end{tikzcd} \]

We are going to use these properties in order to define a cartesian, subcanonical site for the $ \mathbb{T}$-over-topos associated with $M$. This will involve the \emph{category of elements} of $M$, and a certain topology related to the syntactic topology $J_{\mathbb T}$ of $ \mathbb{T}$. Let $ \mathcal{C}_\mathbb{T}$ denote the geometric syntactic of $\mathbb{T}$, and, for any object $ \{\vec{x}^{\vec{A}}. \phi\}$ of $\mathcal{C}_\mathbb{T}$, that is, a geometric formula $ \phi $ in the signature $ \mathcal{L}$ in the context $\vec{x}$ of sort $\vec{A}$, $ \llbracket \vec{x}^{\vec{A}}.\phi \rrbracket_M$ the interpretation of $\{\vec{x}^{\vec{A}}. \phi\}$ in $ M $. In particular the category $ \int M$ of elements of $M$ - seen as a geometric functor $F_{M}:{\cal C}_{\mathbb T}\to \Set$ - has as objects the pairs $(\{ \vec{x}^{\vec{A}}.\phi \}, \vec{a})$, where $ \vec{a} \in \llbracket \vec{x}^{\vec{A}}.\phi\rrbracket_M $ is a global element of $M\vec{A}$ satisfying the formula $\phi$, and as arrows $(\{ \vec{x}_1^{\vec{a_1}}.\phi_1 \}, \vec{a_1}) \rightarrow (\{ \vec{x}_2^{\vec{a_2}}.\phi_2 \}, \vec{a_2}) $ the arrows $ [\theta] : \{ \vec{x}_1^{\vec{a_1}}.\phi_1 \} \rightarrow \{ \vec{x}_2^{\vec{a_2}}.\phi_2 \}_1$ in ${\cal C}_{\mathbb T}$ such that $\llbracket \theta \rrbracket_M (\vec{a_1}) = \vec{a_2}$, that is diagramatically in $ \Set$:
\[ \begin{tikzcd}
& 1 \arrow[]{ld}[swap]{ \vec{a_1}  }  \arrow[]{rd}{ \vec{a_2} } & \\
\llbracket \vec{x}_1^{A_1}.\phi_1\rrbracket_M \arrow[]{rr}{\llbracket\theta \rrbracket_M} & & \llbracket \vec{x}_2^{A_2}.\phi_2\rrbracket_M 
\end{tikzcd}\]

We shall find it convenient to present our Grothendieck topologies in terms of bases generating them. Recall that a basis $\cal B$ for a Grothendieck topology on a category $\cal C$ is a collection of \emph{presieves} on objects of $\cal C$ (by a presieve we simply mean a small family of arrows with common codomain) satisfying the following properties (where we denote by ${\cal B}(c)$, for an object $c$ of $\cal C$, the collection of presieves in $\cal B$ on the object $c$):

\begin{enumerate}
		\item[(a)] If $f$ is the identity then $\{f\}$ lies in ${\cal B}(\cod(f))$.
		
		\item[(b)] If $R\in {\cal B}(c)$ then for any arrow $g:d\rightarrow c$ in $\cal C$ there exists a presieve $T$ in ${\cal B}(d)$ such that for each $t\in T$, $g\circ t$ factors through some arrow in $R$.
		
		\item[(c)] $\cal B$ is closed under ``multicomposition'' of families; that is, given a presieve $\{f_{i}:c_{i}\rightarrow c \mid i\in I \}$ in ${\cal B}(c)$ and for each $i\in I$ a presieve $\{g_{ij}:d_{ij}\rightarrow c_{i} \mid j\in I_{i} \}$ in ${\cal B}(c_{i})$, the ``multicomposite'' presieve $\{\,f_{i}\circ g_{ij}:d_{ij}\rightarrow c \mid i\in I, j\in I_{i} \}$ belongs to ${\cal B}(c)$.
\end{enumerate}

The Grothendieck topology generated by a basis has as covering sieves precisely those which contain a presieve in the basis.

As it is well known, the Grothendieck topology $J_{\mathbb T}$ on ${\cal C}_{\mathbb T}$ has as a basis the collection ${\cal B}_{\mathbb T}$ of small families $\{ [\theta_i]:\{\vec{x_i}. \phi_i\} \to \{\vec{x}. \phi\} \mid i\in I\}$ such that the sequent 
\[
(\phi \vdash_{\vec{x}} \bigvee_{i\in I} (\exists \vec{x_{i}})\theta_i(\vec{x_{i}}, \vec{x}))
\]
is provable in $\mathbb T$.  

The fact that the following definition is well-posed in ensured by the subsequent Lemma.

\begin{definition}\label{antecedents topology}
Let $M$ be a set-based model of a geometric theory $\mathbb T$. We define the \emph{antecedents topology at $M$} as the Grothendieck topology $J^{\textup{ant}}_M$ on $ \int M$ generated by the basis ${\cal B}_{\mathbb T}^{\textup{ant}}$ consisting of the families 
\[  \big{(}(\vec{b},\{ \vec{x}_i^{\vec{A}_i}. \phi_i \}) \stackrel{\lbrack \theta_i \rbrack }{\longrightarrow} (\vec{a}, \{ \vec{x}^{\vec{A}}. \phi \})\big{)}_{i\in I, \,\vec{b} \textup{ }\mid \textup{ } \llbracket \theta_i \rrbracket_M(\vec{b})=\vec{a} }  
\]
(indexed by the objects $(\vec{a}, \{\vec{x}. \phi\})$ of $\int M$ and the families $( \lbrack \theta_i \rbrack)_{i \in I} $ in ${\cal B}_{\mathbb T}$ on $\{\vec{x}. \phi\}$) consisting of all the ``antecedents'' $\vec{b}$ of a given $\vec{a} \in \llbracket \vec{x}^{\vec{A}}. \phi \rrbracket_M $ with respect to some $\lbrack \theta_i \rbrack$. 
\end{definition}

\begin{lemma}
The collection ${\cal B}_{\mathbb T}^{\textup{ant}}$ of sieves in $ \int M$ is a basis for a Grothendieck topology.
\end{lemma}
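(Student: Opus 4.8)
The plan is to verify directly that $\mathcal{B}_{\mathbb T}^{\textup{ant}}$ satisfies axioms (a), (b), (c) for a basis. Throughout, the key observation is that a presieve in $\mathcal{B}_{\mathbb T}^{\textup{ant}}$ on an object $(\vec a, \{\vec x^{\vec A}.\phi\})$ is obtained from a presieve $(\lbrack\theta_i\rbrack)_{i\in I}$ in $\mathcal{B}_{\mathbb T}$ on $\{\vec x^{\vec A}.\phi\}$ by, informally, ``pulling back along $\vec a : 1 \to \llbracket\vec x^{\vec A}.\phi\rrbracket_M$'' — i.e.\ taking, for each $i$, all antecedents $\vec b$ with $\llbracket\theta_i\rrbracket_M(\vec b)=\vec a$. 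So each step will reduce to the corresponding property of $\mathcal{B}_{\mathbb T}$ together with some bookkeeping about antecedents, and it is here that the indecomposability and projectivity of $1$ in $\Set$ enter.

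For (a): the identity presieve $\{[1_{\{\vec x^{\vec A}.\phi\}}]\}$ lies in $\mathcal{B}_{\mathbb T}$ (axiom (a) for $\mathcal{B}_{\mathbb T}$), and its associated antecedent family on $(\vec a,\{\vec x^{\vec A}.\phi\})$ consists of all $\vec b$ with $\llbracket 1\rrbracket_M(\vec b)=\vec a$, i.e.\ just $\vec b = \vec a$ with the identity arrow; so this family is $\{1_{(\vec a,\{\vec x^{\vec A}.\phi\})}\}$, as required. For (b): given an antecedent family $R$ on $(\vec a,\{\vec x^{\vec A}.\phi\})$ coming from $(\lbrack\theta_i\rbrack)_{i\in I}\in\mathcal{B}_{\mathbb T}$, and an arrow $[\eta]:(\vec c,\{\vec y^{\vec B}.\psi\})\to(\vec a,\{\vec x^{\vec A}.\phi\})$ in $\int M$ (so $\llbracket\eta\rrbracket_M(\vec c)=\vec a$), apply axiom (b) for $\mathcal{B}_{\mathbb T}$ to get a presieve $(\lbrack\xi_k\rbrack)_{k\in K}\in\mathcal{B}_{\mathbb T}$ on $\{\vec y^{\vec B}.\psi\}$ such that each $[\eta]\circ[\xi_k]$ factors through some $[\theta_{i(k)}]$, say $[\eta]\circ[\xi_k]=[\theta_{i(k)}]\circ[\zeta_k]$. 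Take $T$ to be the antecedent family on $(\vec c,\{\vec y^{\vec B}.\psi\})$ associated to $(\lbrack\xi_k\rbrack)_{k\in K}$; for $t=[\xi_k]$ with antecedent $\vec d$ (so $\llbracket\xi_k\rrbracket_M(\vec d)=\vec c$), one checks that $[\zeta_k]$ sends $\vec d$ to an antecedent $\vec b:=\llbracket\zeta_k\rrbracket_M(\vec d)$ of $\vec a$ via $\theta_{i(k)}$ — indeed $\llbracket\theta_{i(k)}\rrbracket_M(\vec b)=\llbracket\eta\rrbracket_M(\llbracket\xi_k\rrbracket_M(\vec d))=\llbracket\eta\rrbracket_M(\vec c)=\vec a$ — so $[\eta]\circ t$ factors through the arrow $[\theta_{i(k)}]:(\vec b,\{\vec x_{i(k)}^{\vec A_{i(k)}}.\phi_{i(k)}\})\to(\vec a,\{\vec x^{\vec A}.\phi\})$ of $R$, via $[\zeta_k]$, which is a well-defined arrow of $\int M$ by the above computation. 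For (c): given an antecedent family on $(\vec a,\{\vec x^{\vec A}.\phi\})$ from $(\lbrack\theta_i\rbrack)_{i\in I}\in\mathcal{B}_{\mathbb T}$, with members indexed by pairs $(i,\vec b)$, and for each such member $(\vec b,\{\vec x_i^{\vec A_i}.\phi_i\})\xrightarrow{[\theta_i]}(\vec a,\{\vec x^{\vec A}.\phi\})$ an antecedent family from $(\lbrack\sigma_{ij}\rbrack)_{j\in I_i}\in\mathcal{B}_{\mathbb T}$ on $\{\vec x_i^{\vec A_i}.\phi_i\}$, the multicomposite presieve $(\lbrack\theta_i\circ\sigma_{ij}\rbrack)_{i,j}$ lies in $\mathcal{B}_{\mathbb T}$ by axiom (c) for $\mathcal{B}_{\mathbb T}$, and one verifies that its associated antecedent family on $(\vec a,\{\vec x^{\vec A}.\phi\})$ is exactly the multicomposite of the given families in $\int M$ — the point being that $\vec e$ is an antecedent of $\vec a$ via $\theta_i\circ\sigma_{ij}$ iff $\llbracket\sigma_{ij}\rrbracket_M(\vec e)$ is an antecedent of $\vec a$ via $\theta_i$ and $\vec e$ is an antecedent of $\llbracket\sigma_{ij}\rrbracket_M(\vec e)$ via $\sigma_{ij}$, which is a routine unravelling.

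The step I expect to require the most care is (c), specifically the claim that taking antecedents commutes with multicomposition on the nose, since one must be careful about which intermediate element plays the role of the ``$\vec b$'' in the definition, and confirm that no member of the multicomposite antecedent family is missed or duplicated; however, no subtlety beyond careful bookkeeping arises, since all the families involved are genuine antecedent families of $\mathcal{B}_{\mathbb T}$-presieves and the indexing is determined by the global elements of the relevant interpretations. It is worth noting that the statement as phrased speaks of ``sieves in $\int M$'', whereas $\mathcal{B}_{\mathbb T}^{\textup{ant}}$ consists of presieves; this is harmless, as a basis is by definition a collection of presieves, and one may equally work with the sieves they generate. I would present the proof in three short paragraphs, one per axiom, each opening by invoking the corresponding axiom for $\mathcal{B}_{\mathbb T}$ and then transporting it through the antecedent construction.
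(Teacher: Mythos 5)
Your overall strategy---verifying (a), (b), (c) by transporting the corresponding properties of ${\cal B}_{\mathbb T}$ through the antecedent construction---is the same as the paper's, and your treatment of (a) and (b) is correct. On (b), however, you take a genuinely different route: the paper does not invoke the abstract axiom (b) for ${\cal B}_{\mathbb T}$, but forms the pullbacks of the arrows $[\theta_i]$ along $[\theta]$ inside $\int M$, using that ${\cal B}_{\mathbb T}$ is stable under pullback and that $F_{M}$ is cartesian (so that $\llbracket \theta^\ast\theta_i \rrbracket_M = \llbracket \theta \rrbracket_M^\ast \llbracket \theta_i \rrbracket_M$); the required refinement is then exhibited canonically as the full antecedent family at $\vec{c}$ of the pulled-back syntactic cover $\{[\theta^\ast\theta_i]\}$. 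Your argument uses only the weaker axiom (b) for ${\cal B}_{\mathbb T}$ together with the (correctly verified) observation that the factorizing arrows $[\zeta_k]$ define morphisms of $\int M$ landing on antecedents of $\vec{a}$; this is more elementary and would work for any basis in place of ${\cal B}_{\mathbb T}$, while the paper's pullback formulation has the advantage of being canonical and of matching the fibre-product picture used throughout the rest of the paper.

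On (c), which you rightly single out as the delicate step, your claim that the antecedent family of the multicomposite is \emph{exactly} the multicomposite of the given families is only valid under the assumption, silently built into your notation $([\sigma_{ij}])_{j\in I_i}$, that the refining ${\cal B}_{\mathbb T}$-presieve depends only on $i$. Axiom (c) requires a choice of basis presieve on the domain of \emph{each member} of the antecedent family, that is, for each pair $(i,\vec{b})$, and these choices may differ for different antecedents $\vec{b}$ of $\vec{a}$ along the same $\theta_i$; in that case an arrow $\sigma$ chosen at $\vec{b}'$ may have antecedents lying over a different antecedent $\vec{b}''$, which are counted by the full antecedent family of the composite but not by the multicomposite, so the multicomposite need not itself be of the form prescribed in Definition \ref{antecedents topology}. (For instance, take $\mathbb T$ with axioms $(\top \vdash (\exists x)\top)$ and $(\top \vdash_{x} P(x)\vee Q(x))$, $M$ a two-element model with $P$ interpreted as everything and $Q$ as a singleton, the cover $\{x. \top\}\to \{[]. \top\}$, the identity family chosen at one antecedent and the family $\{[P(y)\wedge y=x], [Q(y)\wedge y=x]\}$ at the other: the resulting multicomposite is not a full antecedent family, nor does it contain one.) To be fair, the paper's own proof of (c) is a one-line appeal to ${\cal B}_{\mathbb T}$ being a basis and does not address this point either, so you are not below its level of detail; but since you explicitly assert exactness and that ``no subtlety arises'', you should either restrict to uniform choices and then argue separately that the collection of sieves containing a family of ${\cal B}_{\mathbb T}^{\textup{ant}}$ is closed under the transitivity axiom, or weaken the exactness claim and supply an argument at the level of generated sieves; as written, this is the one step of your proof that can fail.
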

\begin{proof}
\begin{itemize}
    \item Condition (a) is trivially satisfied. 
    
    \item Condition (b): given an arrow $\lbrack \theta \rbrack : (\vec{c},\{ y^{\vec{B}}. \psi \}) \rightarrow  (\vec{a}, \{ \vec{x}^{\vec{A}}. \phi \})$ in $ \int M$ and a family $ \lbrack \theta_i \rbrack_\mathbb{T } : \{ \vec{x}_i^{\vec{A}_i}. \phi_i \} \rightarrow \{ \vec{x}^{\vec{A}}. \phi \})_{i \in I} $ in ${\cal B}_{\mathbb T}$ on $\{ \vec{x}^{\vec{A}}. \phi \}$, we have the following pullback squares in $\int M$ for each antecedent $ \vec{b}$ of $\vec{a}$: 
    \[ \begin{tikzcd}
    ((\vec{c},\vec{b}),\{ \vec{y}^{\vec{B}},\vec{x}_i^{\vec{A}_i}. \theta(\vec{y})=\theta_i(\vec{x}_i) \})  \arrow[phantom, very near start]{rd}{\lrcorner} \arrow[]{r}{} \arrow[]{d}[swap]{\lbrack \theta^*\theta_i \rbrack} & (\vec{b},\{ \vec{x}_i^{\vec{A}_i}. \phi_i \}) \arrow[]{d}{\lbrack \theta_i \rbrack}) \\  (\vec{c},\{ \vec{y}^{\vec{B}}. \psi \}) \arrow[]{r}{\lbrack \theta \rbrack} & (\vec{a}, \{  \vec{x}^{\vec{A}}. \phi  \})
    \end{tikzcd}\]
    
    Note that the family $\{[\theta^*\theta_i] \mid i\in I\}$ lies in ${\cal B}_{\mathbb T}(\{\vec{y}^{\vec{B}}. \psi\})$ (as ${\mathcal B}_{\mathbb T}$ is stable under pullback). So the family
	\[
	\big{(}[\theta^*\theta_i]:((\vec{c},\vec{b}),\{ \vec{y}^{\vec{B}},\vec{x}_i^{\vec{A}_i}. \theta(\vec{y})=\theta_i(\vec{x}_i) \}) \to (\vec{c},\{ \vec{y}^{\vec{B}}. \psi \})\big{)}_{i\in I} 
	\] 
	is the family of antecedents of $\vec{c}$ indexed by it, whence it lies in ${\mathcal B}_{\mathbb T}^{\textup{ant}}$, as desired; indeed,
	$ \llbracket \theta^*\theta_i \rrbracket_M = \llbracket \theta \rrbracket_M ^* \llbracket \theta_i \rrbracket_M $ since $F_{M}$ is cartesian.
	
    \item Condition (c) follows immediately from the fact that ${\mathcal B}_{\mathbb T}$ is a basis for $J_{\mathbb T}$. 
\end{itemize}\end{proof}

\begin{remark}
For a family $ ( \lbrack \theta_i \rbrack)_{i \in I} $ in ${\cal B}_{\mathbb T}(\{\vec{x}. \phi\})$ and a global element $ \vec{a}$ of the interpretation $\llbracket \vec{x}^{\vec{A}}. \phi \rrbracket_M$ of its codomain, the fiber at $\vec{a}$ of some $\llbracket\theta_i\rrbracket_M$ can be categorically characterized as the following pullback:
\[ 
\begin{tikzcd}[column sep=large]
 \llbracket \theta_i \rrbracket_M ^{-1}(\vec{a}) \arrow[d] \arrow[r] \arrow[rd, "\lrcorner", very near start, phantom]         & 1 \arrow[d, "\vec{a}"]                                \\
 \llbracket \vec{x}_i^{\vec{A}_i}. \phi_i \rrbracket_M \arrow[r, " \llbracket \theta_i \rrbracket_M"] & \llbracket \vec{x}^{\vec{A}}. \phi \rrbracket_M
\end{tikzcd} \]
As $F_{M}$ is a $J_\mathbb{T}$-continuous cartesian functor, it sends $J_\mathbb{T}$-covering families to jointly surjective families in $\Set$, so by the stability of epimorphisms under pullback the global fiber of the cover at $\vec{a}$ is also an epimorphism:
\[ 
\begin{tikzcd}
\langle \llbracket \theta_i \rrbracket_M \rangle_{i \in I}^{-1}(\vec{a}) \arrow[d] \arrow[rr, two heads] \arrow[rrd, "\lrcorner", very near start, phantom] &        & 1 \arrow[d, "\vec{a}"]                                \\
\underset{i \in I}{\coprod} \llbracket \vec{x}_i^{\vec{A}_i}. \phi_i \rrbracket_M \arrow[rr, "\langle \llbracket \theta_i \rrbracket_M \rangle_{i \in I}", two heads] & & \llbracket \vec{x}^{\vec{A}}. \phi \rrbracket_M
\end{tikzcd} \]
Note that, by the stability of colimits under pullback, the global fiber of $\vec{a}$ decomposes as the coproduct 
\[ \langle \llbracket \theta_i \rrbracket_M \rangle_{i \in I}^{-1}(\vec{a}) \simeq \coprod_{i \in I}\llbracket \theta_i \rrbracket_M ^{-1}(\vec{a}). \]

One can then identify the set of antecedents of $ \vec{a}$ with the set of global elements $ \vec{b} : 1 \rightarrow  \langle \llbracket \theta_i \rrbracket_M \rangle_{i \in I}^{-1}(\vec{a})$, which decomposes as the disjoint union of the sets of global elements of the fibers $ \llbracket \theta_i \rrbracket_M ^{-1}(\vec{a})$ (since $1$ is indecomposable and coproducts are disjoint and stable under pullback). 

Note however that the projection $ \int M \rightarrow \mathcal{C}_\mathbb{T}$ does \emph{not} send $J^{\textup{ant}}_{M}$-covers to $ J_\mathbb{T}$-covers as $\vec{a} \in \llbracket \vec{x}^{\vec{A}}. \phi \rrbracket_M$ may have antecedents in only some of the $ \llbracket \vec{x}_i^{\vec{A}_i}. \phi_i \rrbracket_M$.
\end{remark}

\begin{proposition}
The category $ \int M $ of elements of $M$ is geometric.
\end{proposition}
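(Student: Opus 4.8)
The plan is to transfer the geometric structure of $\mathcal{C}_{\mathbb{T}}$ to $\int M$ along the canonical projection $\pi:\int M\to\mathcal{C}_{\mathbb{T}}$, which, being the projection from the category of elements of a covariant $\Set$-valued functor, is a discrete opfibration and hence creates connected colimits, and which moreover creates finite limits because $\mathcal{C}_{\mathbb{T}}$ has them and $F_M$ is cartesian. Explicitly, the terminal object of $\int M$ is $(1,\ast)$ with $1$ terminal in $\mathcal{C}_{\mathbb{T}}$ (well defined as $F_M(1)\cong 1$), and the pullback of $(c_1,\vec{a_1})\to(c,\vec a)\leftarrow(c_2,\vec{a_2})$ is $(c_1\times_c c_2,(\vec{a_1},\vec{a_2}))$, which makes sense since $F_M(c_1\times_c c_2)\cong F_M(c_1)\times_{F_M(c)}F_M(c_2)$. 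In particular $\pi$ preserves finite limits and reflects equalities of parallel arrows.

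The key step is to identify the monomorphisms and regular epimorphisms of $\int M$. Since being a monomorphism means that the two projections of the kernel pair coincide, and $\pi$ creates kernel pairs while reflecting equality of arrows, $\pi$ preserves and reflects monomorphisms: an arrow of $\int M$ is monic iff it lies over a monomorphism of $\mathcal{C}_{\mathbb{T}}$. Dually, as $\pi$ creates connected colimits it creates coequalizers of parallel pairs, and together with the previous point this yields that an arrow of $\int M$ is a regular epimorphism iff it lies over a regular epimorphism of $\mathcal{C}_{\mathbb{T}}$ (in one direction apply $\pi$ to the coequalizer of the kernel pair; in the other, the coequalizer in $\int M$ of the kernel pair of $g$ is created from the one in $\mathcal{C}_{\mathbb{T}}$ and is readily seen to be $g$ again). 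It then follows that the (regular epi, mono) factorization of the underlying arrow in $\mathcal{C}_{\mathbb{T}}$ lifts along $\pi$ to a (regular epi, mono) factorization in $\int M$, and that regular epimorphisms of $\int M$ are stable under pullback, since pullbacks in $\int M$ are computed over $\mathcal{C}_{\mathbb{T}}$ and regular epimorphisms are pullback-stable there. Hence $\int M$ is regular.

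It remains to treat the subobject posets. By the description of monomorphisms, together with the fact that $F_M$ preserves monomorphisms, $\Sub_{\int M}((c,\vec a))$ is the full sub-poset of $\Sub_{\mathcal{C}_{\mathbb{T}}}(c)$ on the subobjects $c'\rightarrowtail c$ with $\vec a\in F_M(c')\subseteq F_M(c)$. For a family $(c'_i\rightarrowtail c)_i$ of such subobjects, the union $\bigvee_i c'_i$ computed in $\mathcal{C}_{\mathbb{T}}$ again passes through $\vec a$: indeed $F_M$, being $J_{\mathbb{T}}$-continuous, sends the covering family $\{c'_i\rightarrowtail\bigvee_j c'_j\}_i$ to a jointly surjective family (as in the Remark above), so $F_M(\bigvee_j c'_j)=\bigcup_j F_M(c'_j)\ni\vec a$. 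This union, being a least upper bound in the sub-poset, is therefore the union of the $c'_i$ in $\Sub_{\int M}((c,\vec a))$, and such unions are pullback-stable because pullbacks and unions in $\int M$ are both computed over $\mathcal{C}_{\mathbb{T}}$, where stability holds. Together with regularity this exhibits $\int M$ as a geometric category. The only genuine work is in the second step: once the monomorphisms and regular epimorphisms of $\int M$ are recognised as precisely the arrows lying over such arrows of $\mathcal{C}_{\mathbb{T}}$ — which hinges on each class being cut out by a finite limit together with a connected colimit, both created by $\pi$ — everything else is a routine transfer of structure.
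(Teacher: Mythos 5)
Your proof is correct, and while it rests on the same underlying idea as the paper's (everything is inherited from the geometricity of $\mathcal{C}_{\mathbb{T}}$ through the projection, using that $F_M$ is cartesian and $J_{\mathbb{T}}$-continuous), the implementation is genuinely different. The paper argues by explicit syntactic computation: finite limits are written down as a conjunctive formula, the image of $[\theta]$ is exhibited as $(\vec{a},\{\vec{x}.\exists\vec{y}\,\theta\})$, and meets and joins of subobjects are given by conjunction and disjunction of formulas. You never touch the syntax: you use that $\pi\colon \int M\to\mathcal{C}_{\mathbb{T}}$ is a discrete opfibration which creates the finite limits preserved by $F_M$ and creates coequalizers (in particular coequalizers of kernel pairs, which exist downstairs by regularity of $\mathcal{C}_{\mathbb{T}}$), and from this you characterize monomorphisms and regular epimorphisms of $\int M$ as precisely the arrows lying over such arrows of $\mathcal{C}_{\mathbb{T}}$; your kernel-pair criterion is indeed the right tool for the direction ``mono upstairs implies mono downstairs'', since arbitrary test pairs in $\mathcal{C}_{\mathbb{T}}$ need not lift, whereas the kernel pair always carries the element $(\vec{a},\vec{a})$. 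After that, existence and pullback-stability of images, and of unions via the order-embedding $\Sub_{\int M}((c,\vec{a}))\hookrightarrow\Sub_{\mathcal{C}_{\mathbb{T}}}(c)$, are formal transfers (your appeal to $J_{\mathbb{T}}$-continuity for unions is not even needed for inhabited families, since $\vec{a}$ already lies in one of the $F_M(c'_i)$). What the paper's computation buys is the explicit formula-level description of limits, images and subobjects in $\int M$; what yours buys is the recognition that the statement is purely formal and holds for the category of elements of any cartesian cover-preserving functor from a geometric category to $\Set$. One shared caveat: both your construction of unions and the paper's formula $(\vec{a},\{\vec{x}.\bigvee_{i}\phi_i\})$ only make sense for nonempty families, since $\Sub_{\int M}((c,\vec{a}))$ need not have a bottom element (there is in general no least geometric formula satisfied by $\vec{a}$), so ``arbitrary joins'' must be read as joins of inhabited families; your argument neither worsens nor removes this implicit restriction of the paper's proof.
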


\begin{proof}
Actually all the properties we have to check are inherited from the geometricity of $\mathcal{C}_\mathbb{T}$ and the fact that $M$ is a model of $\mathbb{T}$: \begin{itemize}
    \item $\int M$ is cartesian: for any finite diagram $ D \rightarrow \int M$, the underlying diagram in $\mathcal{C}_\mathbb{T}$ has a limit \[ \underset{d \in D}{\lim} \{ \vec{x}_{d}^{\vec{A}_{d}}. \phi_{d} \} = \{ (\vec{x}_d^{\vec{A}_d})_{d \in D}, \bigwedge_{\delta : d \rightarrow d'} \theta_{\delta}(\vec{a}_d)=\vec{a}_{d'} \} \] which is sent to a limit in $\Set$ by the cartesian functor $F_{M}$; note that an element of $ \underset{d \in D}{\lim} \; \llbracket \vec{x}_{d}^{\vec{A}_{d}}. \phi_{d} \rrbracket_M  $ is a family $ (\vec{a}_d)_{d \in D} $ with $ \vec{a}_d \in \llbracket \vec{x}_{d}^{\vec{A}_{d}}. \phi_{d} \rrbracket_M$ and $ \llbracket \theta_\delta \rrbracket_M(\vec{a}_d) = \vec{a}_{d'}$ for each transition morphism $ \delta : d \rightarrow d'$ in $D$. This exactly says that 
    \[ ((\vec{a}_d)_{d \in D}, \underset{d \in D}{\lim} \; \{ \vec{x}_{d}^{\vec{A}_{d}}. \phi_{d} \}) = \underset{d \in D}{\lim} \; (a_d, \{ \vec{x}_{d}^{\vec{A}_{d}}. \phi_{d} \}). \]
    \item The image factorization in $ \int M$ of an arrow $ \lbrack \theta \rbrack : (\vec{b},\{ \vec{y}^{\vec{B}}. \psi \}) \rightarrow  (\vec{a}, \{ \vec{x}^{\vec{A}}. \phi \})$ is given by
    \[ \begin{tikzcd} 
    (\vec{b},\{ \vec{y}^{\vec{B}}. \psi \}) \arrow[]{rr}{\lbrack \theta \rbrack} \arrow[two heads]{rd}{} && (\vec{a}, \{ \vec{x}^{\vec{A}}. \phi \}) \\ & (\vec{a}, \{\exists \vec{y} \theta(\vec{y},\vec{x})\}) \arrow[tail]{ru}{} &  
    \end{tikzcd}\]
    and is easily seen to be pullback stable.
    \item Subobjects in $ \int M$ are arrows of the form $ (\vec{a}, \{ \vec{x}^{\vec{A}}. \phi \}) \hookrightarrow (\vec{a}, \{ \vec{x}^{\vec{A}}. \psi \}) $ with $(\phi \vdash_\mathbb{T} \psi )$. It thus follows at once that lattices of subobjects are frames, as their finite meets (resp. arbitrary joins) are given by 
    \[ (\vec{a}, \{ \vec{x}^{\vec{A}}. \bigwedge_{i \in I} \phi_i \}) \textup{ } (\textrm{resp. } (\vec{a}, \{ \vec{x}^{\vec{A}}. \bigvee_{i \in I} \phi_i \}) ) \]
    for any finite family (resp. arbitrary family)  $\{ (\vec{a}, \{ \vec{x}^{\vec{A}}. \phi_i \}) \hookrightarrow (\vec{a}, \{ \vec{x}^{\vec{A}}. \phi \}) \mid i\in I\}$ of subobjects of a given $(\vec{a}, \{\vec{x}^{\vec{A}}. \phi \})$. 
\end{itemize}\end{proof}

\begin{definition}
We define the \emph{ $ \mathbb{T}$-over-topos at $M$} as the sheaf topos $ \Sh(\int M, J^{\textup{ant}}_{M})$, and denote it by $ \Set[M]$. 
\end{definition}

We shall now define a geometric theory $ \mathbb{T}_M$ whose models in any Grothendieck topos $ \mathcal{G}$ will be all homomorphisms of $\mathbb{T}$-models $ g : N \rightarrow \gamma^*M$, where $\gamma$ is the unique geometric morphism ${\cal G}\to \Set$; this theory will be classified by the $ \mathbb{T}$-over-topos at $M$.

\begin{definition}
Let $\Set_M$ be the language with a sort $ S_{(\vec{a}, \{ \vec{x}^{\vec{A}}. \phi \})}$ for each object $(\vec{a}, \{ \vec{x}^{\vec{A}}. \phi \})$ of the category of elements of $M$ and a function symbol 
    \[ S_{(\vec{a_1}, \{ \vec{x_1}^{\vec{A_1}}. \phi_1 \})} \stackrel{f^{\vec{a_1},\vec{a_2}}_\theta}{\longrightarrow} S_{(\vec{a_2}, \{ \vec{x_2}^{\vec{A_2}}. \phi_2 \})} \]
    for each $ [\theta(\vec{x}_1^{\vec{a_1}},\vec{x}_2^{\vec{a_2}}]_\mathbb{T} : \{ \vec{x}_1^{\vec{a_1}}. \phi_1 \} {\longrightarrow} \{ \vec{x}_2^{\vec{a_2}}. \phi_2 \}  $ such that $\llbracket\theta \rrbracket_M(\vec{a_1}) = \vec{a_2}$.
    
    Let $\mathcal{L}^{c}_M$ be the extension of the language $\cal L$ with a tuple of constant symbols $c_{(\vec{a}, \{ x^{\vec{A}}. \phi \})}$ for each $(\vec{a}, \{ x^{\vec{A}}. \phi \}) \in \int M$. There is a canonical $\mathcal{L}^{c}_M$-structure $M^{c}$ extending $M$, obtained by interpreting each constant by the corresponding element of $M$. 

    We can naturally interpret $\Set_M$ in $\mathcal{L}^c_M$ by replacing, in the obvious way, each variable of sort $S_{(\vec{a}, \{ \vec{x}^{\vec{A}}. \phi \})}$ appearing freely in a formula with the corresponding tuple of constants $c_{(\vec{a}, \{ x^{A}. \phi \})}$ and each function symbol with the corresponding $\mathbb T$-provably functional formula. This yields, for each formula-in-context $ \{ \vec{z}^{\vec{S}}.\psi \} $ over $\Set_M$ a closed formula $\psi^\sharp$.  
    
    Let $\mathbb{T}_M$ be the theory over $\Set_M$ having as axioms all the geometric sequents 
    \[ (\phi \vdash_{\vec{x}^{\vec{S}_{(\vec{a}, \{ \vec{x}^{\vec{A}}. \phi \})}}}  \psi)
    \]
    such that the corresponding sequent
    \[
    (\phi^\sharp \vdash  \psi^\sharp)
    \]
    is valid in $M^{c}$. 
\end{definition}

Given a Grothendieck topos $ \mathcal{G}$ with global section functor $ \gamma$, the model $ M$ is sent in $\mathcal{G}$ to a $\mathbb{T}$-model $ \gamma^*M$ and for each $\{ \vec{x}^A.\phi \}$, one has 
\[  \llbracket \vec{x}^{\vec{A}}.\phi\rrbracket_{\gamma^*M} =  \gamma^*(\llbracket \vec{x}^{\vec{A}}.\phi\rrbracket_M) = \coprod_{ \vec{a}  : 1 \rightarrow  \llbracket \vec{x}^{\vec{A}}.\phi \rrbracket_M} 1 \]
so any global element $ \vec{a}  : 1 \rightarrow  \llbracket \vec{x}^{\vec{A}}.\phi \rrbracket_M$ is sent into a global element  $ \gamma^*(\vec{a}) : 1 \rightarrow \llbracket \vec{x}^{\vec{A}}.\phi \rrbracket_{\gamma^*M}$ in $ \mathcal{G}$. Note that each element of $\llbracket \vec{x}^{\vec{A}}.\phi \rrbracket_{\gamma^*M}$ is counted exactly once in this coproduct. (since coproducts are disjoint in a topos).

\begin{theorem}
Let $ \mathbb{T}$ be a geometric theory and $M$ a model of $ \mathbb{T}$ in $ \Set$. 
\begin{enumerate}
    \item[(i)] The theory $\mathbb{T}_M$ axiomatizes the $\mathbb T$-model homomorphisms to (internalizations of) $M$; that is, for any Grothendieck topos $ \mathcal{G}$ with global section functor $ \gamma : \mathcal{G} \rightarrow \Set$, we have an equivalence of categories 
\[ {\mathbb{T}_M}[{\cal G}] \simeq \mathbb{T}[\mathcal{G}]/\gamma^*M. \]

\item[(ii)] There is a geometric functor 
\[
F_{U}:{\cal C}_{\mathbb{T}_M} \to {\textstyle \int M }
\]
classifying a $\mathbb{T}_M$-model $U$ internal to the category ${\int M }$:
    \[ \begin{array}{ccc}
     S_{(\vec{a}, \{ \vec{x}^{\vec{A}}. \phi \})} & \mapsto & (\vec{a}, \{ \vec{x}^{\vec{A}}. \phi \})  \\
     f^{\vec{a_1},\vec{a_2}}_{\theta}  & \mapsto & [\theta]:(\vec{a_{1}}, \{ \vec{x_{1}}^{\vec{A_{1}}}. \phi_{1} \}) \to (\vec{a_{2}}, \{ \vec{x_{2}}^{\vec{A_{2}}}. \phi_{2} \})
\end{array}  \]

\item[(iii)]
The sheaf topos $\Set[M]=\Sh(\int M, J^{\textup{ant}}_{M})$ is the classifying topos of $ \mathbb{T}_M$; that is, for any Grothendieck topos $\mathcal{G}$ with global section functor $ \gamma : \mathcal{G} \rightarrow \Set$, we have an equivalence of categories 
\[ \textup{\bf Geom}[\mathcal{G}, \Set[M]] \simeq \mathbb{T}[\mathcal{G}]/\gamma^*M. \]

\item[(iv)]  The $\mathbb{T}_M$-model $U$ in $\int M$ as in (ii) is sent by the canonical functor ${\int M}\to \Sh({\int M}, J^{\textup{ant}}_{M})$ to \ac the' universal model of $ \mathbb{T}_M$ inside its classifying topos.

    \item[(v)] There is a full and faithful canonical functor
    \[ \begin{array}{ccc}
    \int M & \stackrel{V}{\rightarrow} & {\cal C}_{\mathbb{T}_M} \\
    (\vec{a}, \{ \vec{x}^{\vec{A}}. \phi \}) & \mapsto & \{x^{S_{(\vec{a}, \{ \vec{x}^{\vec{A}}. \phi \})}}. \top\} \\
     \lbrack\theta\rbrack  & \mapsto & f^{\vec{a_1},\vec{a_2}}_{\theta}
\end{array}  \]

which is a dense (cartesian but not geometric) morphism of sites $({\int M}, J^{\textup{ant}}_{M})\to ({\cal C}_{{\mathbb{T}_M}}, J_{{\mathbb{T}_M}})$ such that $F_{U}\circ V=1_{\int M}$; in particular, $J^{\textup{ant}}_{M}$ is the topology induced by $J_{{\mathbb{T}_M}}$ via $V$ and, $V$ being full and faithful, it is subcanonical.
\end{enumerate}

\end{theorem}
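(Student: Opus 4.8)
The plan is to prove the five statements more or less in the stated order, with (i) and (ii) forming the technical core and (iii)–(v) following fairly formally. For (i), I would work with the $\mathcal L^c_M$-structure $M^c$ and the identification of $\mathbb T_M$-models with certain families of objects indexed by $\int M$. Given a Grothendieck topos $\mathcal G$ with global sections $\gamma$, a $\mathbb T_M$-model $N$ in $\mathcal G$ assigns to each sort $S_{(\vec a,\{\vec x.\phi\})}$ an object, and to each function symbol $f^{\vec a_1,\vec a_2}_\theta$ a morphism; the axioms of $\mathbb T_M$ (those geometric sequents whose $\sharp$-translations hold in $M^c$) are designed to force exactly the data of a cocone. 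More precisely, I would show that giving such a model is the same as giving a $\mathbb T$-model $N'$ in $\mathcal G$ together with, for each global element $\vec a$ of $\llbracket\vec x.\phi\rrbracket_M$, a global element of $\llbracket\vec x.\phi\rrbracket_{N'}$, compatibly with the syntactic maps $[\theta]$; using the decomposition $\llbracket\vec x.\phi\rrbracket_{\gamma^*M}\simeq\coprod_{\vec a}1$ recalled before the theorem, such a compatible family of elements is precisely a $\mathbb T$-model homomorphism $N'\to\gamma^*M$, i.e. an object of $\mathbb T[\mathcal G]/\gamma^*M$. The translation $\psi\mapsto\psi^\sharp$ and the definition of $\mathbb T_M$'s axioms make this a genuine equivalence of categories rather than just a bijection on objects, since a $\mathbb T_M$-model homomorphism unwinds to a commuting triangle of homomorphisms over $\gamma^*M$.

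For (ii), I would exhibit $U$ by checking that the assignment $S_{(\vec a,\{\vec x.\phi\})}\mapsto(\vec a,\{\vec x.\phi\})$, $f^{\vec a_1,\vec a_2}_\theta\mapsto[\theta]$ validates every axiom of $\mathbb T_M$ when interpreted in $\int M$; but this is exactly the content of applying part (i) with $\mathcal G=\Sh(\int M,J^{\mathrm{ant}}_M)$ — or, more directly, one observes that $\int M$ is geometric (by the Proposition above) and that under the $\sharp$-translation the validity of a sequent in $U$ inside $\int M$ reduces to its validity in $M^c$, which holds by construction. Functoriality extends $U$ to a cartesian $J_{\mathbb T_M}$-continuous functor $F_U:\mathcal C_{\mathbb T_M}\to\int M$. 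Then (iv) is the standard fact that composing the classifying functor with the canonical morphism to the sheaf topos yields the universal model, provided we know $\int M$ "with $J^{\mathrm{ant}}_M$" is the classifying topos — which is (iii).

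Statement (iii) I would deduce from (i) together with a site-theoretic comparison: by (i), $\mathbb T_M[\mathcal G]\simeq\mathbb T[\mathcal G]/\gamma^*M$ naturally in $\mathcal G$, and the left-hand side is represented by $\Set[\mathbb T_M]=\Sh(\mathcal C_{\mathbb T_M},J_{\mathbb T_M})$; it then suffices to identify this topos with $\Sh(\int M,J^{\mathrm{ant}}_M)$, which is where (v) enters. For (v), I would define $V$ on objects by $(\vec a,\{\vec x.\phi\})\mapsto\{x^{S_{(\vec a,\{\vec x.\phi\})}}.\top\}$ and on arrows by $[\theta]\mapsto f^{\vec a_1,\vec a_2}_\theta$, check $F_U\circ V=1_{\int M}$ directly from the definitions in (ii), and verify that $V$ is full and faithful — faithfulness and the retraction are immediate, and fullness requires showing that every morphism $\{x^{S_1}.\top\}\to\{x^{S_2}.\top\}$ in $\mathcal C_{\mathbb T_M}$ is $\mathbb T_M$-provably equal to some $f_\theta$, which follows from the axioms of $\mathbb T_M$ (they force each sort to behave like a "point", so the only provably-functional formulas between the $\top$-formulas on single sorts are the function symbols). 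Density of $V$ as a morphism of sites then comes from the fact that every sort $S_{(\vec a,\{\vec x.\phi\})}$ is, by construction, covered in $J_{\mathbb T_M}$ by (images of) the $\int M$-objects — indeed, the generators $c_{(\vec a,\{\vec x.\phi\})}$ exhibit the universal model as generated by the image of $\int M$. Given $V$ dense, full and faithful, the Comparison Lemma gives $\Sh(\int M,J_V)\simeq\Sh(\mathcal C_{\mathbb T_M},J_{\mathbb T_M})$ where $J_V$ is the induced topology; the last point is then to check $J_V=J^{\mathrm{ant}}_M$, which I would do by comparing covering sieves: a $J_{\mathbb T_M}$-cover of $\{x^{S}.\top\}$ pulls back along $V$ to a family of $f_\theta$'s that is jointly "epimorphic modulo $M^c$", and unwinding what that means recovers exactly the antecedent families of Definition \ref{antecedents topology}. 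Subcanonicity of $J^{\mathrm{ant}}_M$ is then automatic since $J_{\mathbb T_M}$ is subcanonical and $V$ is full and faithful.

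The main obstacle I anticipate is the careful bookkeeping in (i): making the $\sharp$-translation and the axiomatization of $\mathbb T_M$ do precisely the work of cutting out cocones over $M$, and upgrading the object-level correspondence to a functorial equivalence, will require some attention to how geometric sequents over $\Set_M$ constrain the interpretation — in particular verifying that no "extra" models sneak in and that the $\mathbb T$-structure $N'$ underlying a $\mathbb T_M$-model genuinely satisfies $\mathbb T$ (not just $\mathbb T_M$'s reduct axioms). A secondary subtlety is the proof of fullness of $V$ in (v), where one must rule out exotic provably-functional formulas between the single-sort $\top$-contexts; this again reduces to a close reading of the axioms of $\mathbb T_M$.
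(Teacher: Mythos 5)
There is a genuine gap at the heart of your argument for (i), and it propagates to (v). You claim that a $\mathbb{T}_M$-model in $\mathcal{G}$ amounts to a $\mathbb{T}$-model $N'$ together with a compatible family of \emph{global elements} of the $\llbracket\vec{x}.\phi\rrbracket_{N'}$, one for each object of $\int M$, and that such a family ``is precisely a homomorphism $N'\to\gamma^*M$''. It is not: by the universal property of the coproduct $\llbracket\vec{x}.\phi\rrbracket_{\gamma^*M}\simeq\coprod_{\vec{a}}1$, a compatible family of global elements $1\to\llbracket\vec{x}.\phi\rrbracket_{N'}$ indexed by $\int M$ assembles into a homomorphism $\gamma^*M\to N'$, i.e.\ it describes morphisms \emph{out of} the internalized model — the Grothendieck--Verdier localization side that the introduction explicitly contrasts with the over-topos. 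A homomorphism $g:N\to\gamma^*M$ corresponds instead to a \emph{fiberwise decomposition}: each $\llbracket\vec{x}.\phi\rrbracket_N$ splits as $\coprod_{\vec{a}\in\llbracket\vec{x}.\phi\rrbracket_M}N^{\vec{a}}_{\{\vec{x}.\phi\}}$, where $N^{\vec{a}}_{\{\vec{x}.\phi\}}$ is the pullback of $g_{\{\vec{x}.\phi\}}$ along $\gamma^*(\vec{a})$, and it is these fibers — arbitrary objects of $\mathcal{G}$, not points — that interpret the sorts $S_{(\vec{a},\{\vec{x}.\phi\})}$ of $\mathbb{T}_M$. The actual proof identifies such data with $J^{\textup{ant}}_{M}$-continuous flat functors $\int M\to\mathcal{G}$ (flatness giving the terminal object and pullback conditions, continuity giving the covering condition coming from ${\cal B}_{\mathbb T}^{\textup{ant}}$), reconstructs $N$ in the other direction as the coproduct of the fibers, checks the two processes are quasi-inverse using disjointness and pullback-stability of coproducts, and extracts the explicit axioms of $\mathbb{T}_M$ precisely as the conditions making $V$ cartesian and cover-preserving. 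None of this fiberwise analysis — which is the technical core of the theorem — survives under your set-up, because your correspondence points the wrong way.

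The same misconception undermines your fullness argument in (v): the axioms of $\mathbb{T}_M$ do \emph{not} force each sort to behave like a point (only the sort over the empty context is forced to be terminal), so you cannot rule out ``exotic'' provably functional formulas by that route. In the paper, fullness and the identification $J^{\textup{ant}}_{M}=$ the topology induced by $J_{\mathbb{T}_M}$ along $V$ are not checked by hand at the syntactic level; they are deduced from the equivalence of toposes $\Sh(\int M,J^{\textup{ant}}_{M})\simeq\Sh({\cal C}_{\mathbb{T}_M},J_{\mathbb{T}_M})$ induced by $V$ and $F_U$ (via the denseness criterion of Proposition 5.3 of the cited denseness paper), together with $F_U\circ V\cong 1_{\int M}$; subcanonicity of $J^{\textup{ant}}_{M}$ then follows from that of $J_{\mathbb{T}_M}$. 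Your overall architecture (establish (i) naturally in $\mathcal{G}$, then get (iii) by comparing sites via $V$, then (ii), (iv)) matches the paper's, but the two load-bearing steps — the fiberwise equivalence in (i) and the justification of fullness/density in (v) — are respectively incorrect and unsubstantiated as written.
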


\begin{proof}

The proof will proceed as follows. We shall first establish, for any Grothendieck topos $\cal G$, an equivalence, natural in $\cal G$, between the category $\mathbb{T}[\mathcal{G}]/\gamma^*M$ of $\mathbb T$-model homomorphisms to $\gamma^*M$ and the category $\textbf{Flat}_{J^{\textup{ant}}_{M}}({\int M}, {\cal G})$ of $J^{\textup{ant}}_{M}$-continuous flat functors ${\int M}\to {\cal G}$. Next, we shall establish an equivalence between $\mathbb{T}[\mathcal{G}]/\gamma^*M$ and $\mathbb{T}_M[\mathcal{G}]$ (natural in $\cal G$), obtaining in the process an explicit axiomatization for the theory ${\mathbb T}_{M}$; moreover, we will show that the resulting equivalence
\[
\textup{\bf Flat}_{J^{\textup{ant}}_{M}}({\textstyle\int M}, {\cal G})\simeq \mathbb{T}_M[\mathcal{G}]\simeq \textup{\bf Cart}_{J_{{\mathbb T}_{M}}}({\cal C}_{{\mathbb T}_{M}}, {\cal G})
\]
is induced, on the one hand, by composition with the cartesian cover-preserving functor $V$, and on the other hand, by composition with the geometric functor $F_{U}$. From this we shall deduce that we have an equivalence of toposes
\[
\Sh({\textstyle\int M}, J^{\textup{ant}}_{M})\simeq \Sh({\cal C}_{{\mathbb T}_{M}}, J_{{\mathbb T}_{M}})
\]
whose two functors are induced by the morphisms of sites $V:({\int M}, J^{\textup{ant}}_{M})\to ({\cal C}_{{\mathbb T}_{M}}, J_{{\mathbb T}_{M}})$ and $F_{U}:({\cal C}_{{\mathbb T}_{M}}, J_{{\mathbb T}_{M}}) \to ({\int M}, J^{\textup{ant}}_{M})$. This in turn implies (by Proposition 5.3 \cite{OCdenseness}) that the morphism of sites $V$ is dense and that $J^{\textup{ant}}_{M}$ is the Grothendieck topology on $\int M$ induced by the syntactic topology $J_{{\mathbb T}_{M}}$. Since, as it is easily seen, $F_{U}\circ V\cong 1_{\int M}$, the functor $V$ is full and faithful and therefore the subcanonicity of $J_{{\mathbb T}_{M}}$ entails that of $J^{\textup{ant}}_{M}$. The above equivalence of toposes also implies, by the syntactic construction of \ac the' universal model of a geometric theory inside its classifying topos, that the ${\mathbb T}_{M}$-model $U$ is sent by the canonical functor ${\int M}\to \Sh({\int M}, J^{\textup{ant}}_{M})$ to \ac the' universal model for ${\mathbb T}_{M}$ inside this topos, thus completing the proof of the theorem.

Let $N$ be in $\mathbb{T}[\mathcal{G}]$ and 
\[ N \stackrel{g}{\longrightarrow} \gamma^*M  \]
be a $ \mathcal{L}$-structure homomorphism in $\mathcal{G}$. By the categorical equivalence between models of a geometric theory and cartesian cover-preserving functors on its syntactic site, $g$ is the same thing as a natural transformation 
\[ \begin{tikzcd}
\llbracket \vec{x}^{\vec{A}}.\phi\rrbracket_N \arrow[]{r}{g_{\{ \vec{x}^{\vec{A}}.\phi \}}} & \llbracket \vec{x}^{\vec{A}}.\phi\rrbracket_{\gamma^*(M)}
\end{tikzcd}   \]
(for $\{\vec{x}^{\vec{A}}.\phi\}\in {\cal C}_{\mathbb T}$). 

Note that $\llbracket \vec{x}^{\vec{A}}.\phi\rrbracket_N $ is the disjoint union 
\[ \llbracket \vec{x}^{\vec{A}}.\phi\rrbracket_N = \coprod_{\vec{a} \in \llbracket \vec{x}^{\vec{A}}.\phi\rrbracket_M} N_{ \{ \vec{x}^{\vec{A}}.\phi \}}^{\vec{a}}\]
of the fibers:
\[ \begin{tikzcd}
N_{ \{ \vec{x}^{\vec{A}}.\phi \}}^{\vec{a}} \arrow[phantom, very near start]{rd}{\lrcorner} \arrow[]{r}{} \arrow[]{d}{} & 1 \arrow[]{d}{\gamma^*(\vec{a})} \\ \llbracket \vec{x}^{\vec{A}}.\phi\rrbracket_N \arrow[]{r}{g_{\{ \vec{x}^{\vec{A}}.\phi \}}} & \llbracket \vec{x}^{\vec{A}}.\phi\rrbracket_{\gamma^*(M)}
\end{tikzcd}
\]

Thus $g_{\{ \vec{x}^{\vec{A}}.\phi \}}$ yields a family of objects $(N_{ \{ \vec{x}^{\vec{A}}.\phi \}}^{\vec{a}})_{(\{ \vec{x}^{\vec{A}}.\phi \}, \vec{a}) \in \int M}$ indexed by the category of elements of $M$. 

The naturality of $g$ implies that for any morphism in $\int M$ corresponding to a $ [\theta]$ in $ \mathcal{C}_\mathbb{T}$, one has a unique arrow 
\[ N_{ \{ \vec{x}_1^{\vec{a_1}}.\phi_1 \}}^{\vec{a_1}} \stackrel{\vec{N}^{\vec{a_1},\vec{a_2}}_{\lbrack\theta\rbrack}}{\rightarrow} N_{ \{ \vec{x}_2^{\vec{a_2}}.\phi_2 \}}^{\vec{a_2}} \]
making the following diagram commute:

\[  \begin{tikzcd}[row sep=small, column sep=small]
N_{ \{ \vec{x}_1^{\vec{a_1}}.\phi_1 \}}^{\vec{a_1}} \arrow[dashed]{rd}{\vec{N}^{\vec{a_1},\vec{a_2}}_{\lbrack\theta\rbrack}} \arrow[phantom, very near start]{rdd}{\lrcorner} \arrow[]{rr}{} \arrow[]{dd}{} & & 1  \arrow[]{dd}[near start, swap]{\gamma^*(\vec{a_1})} \arrow[equal]{rd}{} & \\ 
& N_{ \{ \vec{x}_2^{\vec{a_2}}.\phi_2 \}}^{\vec{a_2}} \arrow[phantom, very near start]{rd}{\lrcorner} \arrow[crossing over]{rr}{} \arrow[]{dd}{} & & 1  \arrow[]{dd}{\gamma^*(\vec{a_2})} \\
\llbracket \vec{x}_1^{\vec{a_1}}.\phi_1\rrbracket_N \arrow[]{rd}[swap]{N([\theta])} \arrow[no head]{r}{g_\{ \vec{x}_1^{\vec{a_1}}.\phi_1 \}} & \arrow[]{r}{} & \llbracket \vec{x}_1^{\vec{a_1}}.\phi_1\rrbracket_{\gamma^*(M)} \arrow[]{rd}{\gamma^*M([\theta])} & \\
& \llbracket x_2^{A_2}.\phi_2\rrbracket_N \arrow[]{rr}{g_{\{ \vec{x}_2^{\vec{a_2}}.\phi_2 \}}} & & \llbracket \vec{x}_2^{\vec{a_2}}.\phi_2\rrbracket_{\gamma^*(M)} 
\end{tikzcd} \]

Let us show that $g$ defines a $J^{\textup{ant}}_{M}$-continuous flat functor:
\[ \begin{array}{ccc}
    \int M & \stackrel{\overline{g}}{\rightarrow}& \mathcal{G} \\
    (\vec{a}, \{ \vec{x}^{\vec{A}}. \phi \}) & \mapsto & N_{\{ \vec{x}^{\vec{A}}. \phi \}}^{\vec{a}} \\
     \lbrack\theta\rbrack  & \mapsto & N^{\vec{a_1},\vec{a_2}}_{\lbrack\theta\rbrack}
\end{array}  \]

We have to check that $\overline{g}$ preserves the terminal object and pullbacks, and that it sends $J^{\textup{ant}}_{M}$-covering families to jointly epimorphic families. These are actually consequences of $N$ being a $\mathbb{T}$-model:

\begin{itemize}
    \item For the terminal object, observe that both $ \llbracket [].\top \rrbracket_N $ and $ \llbracket [].\top\rrbracket_{\gamma^*M} $ are equal to the terminal object $1$, whence $ N_{\{ []. \top \}}^* = 1$ too.
    \item For pullbacks, one knows that 
    \[\llbracket \vec{x}_1^{\vec{a_1}},\vec{x}_2^{\vec{a_2}}. \theta_1(\vec{x}_1)=\theta_2(\vec{x}_2) \rrbracket_N \simeq 
\llbracket \vec{x}_1^{\vec{a_1}}. \phi_1 \rrbracket_N \times_{\llbracket  \vec{x}^{\vec{A}}. \phi  \rrbracket_N} \llbracket \vec{x}_2^{\vec{a_2}}. \phi_2 \rrbracket_N \] as $N$ is a model; then for $\vec{a_1} \in  \llbracket \vec{x}_1^{\vec{a_1}}.\phi_1\rrbracket_M $, $\vec{a_2} \in  \llbracket \vec{x}_2^{\vec{a_2}}.\phi_2\rrbracket_M$ such that $\llbracket \theta_1 \rrbracket_M(\vec{a_1})=\vec{c}=\llbracket \theta_2 \rrbracket_M(\vec{a_2})$, one has 
\begin{equation*}
\begin{split}
     &N_{\{ \vec{x}_1^{\vec{a_1}},\vec{x}_2^{\vec{a_2}}. \theta_1(\vec{x}_1)=\theta_2(\vec{x}_2) \}}^{\vec{a_1},\vec{a_2}} = 1 \times^{ \gamma^*(\vec{a_1}),\gamma^*(\vec{a_2})}_{\llbracket \vec{x}_1^{\vec{a_1}},\vec{x}_2^{\vec{a_2}}. \theta_1(\vec{x}_1)=\theta_2(\vec{x}_2) \rrbracket_{\gamma^*M}} \llbracket \vec{x}_1^{\vec{a_1}},\vec{x}_2^{\vec{a_2}}. \theta_1(\vec{x}_1)=\theta_2(\vec{x}_2) \rrbracket_N \\ 
     &\simeq 1 \times^{ \gamma^*(\vec{a_1}),\gamma^*(\vec{a_2})}_{\llbracket \vec{x}_1^{\vec{a_1}},\vec{x}_2^{\vec{a_2}}. \theta_1(\vec{x}_1)=\theta_2(\vec{x}_2) \rrbracket_{\gamma^*M}} ( \llbracket  \vec{x}_1^{\vec{a_1}}. \phi_1 \rrbracket_N \times_{\llbracket   \vec{x}^{\vec{A}}. \phi  \rrbracket_N}
\llbracket \vec{x}_2^{\vec{a_2}}. \phi_2 \rrbracket_N
 ) \\ &\simeq (1 \times^{\gamma^{\ast}(\vec{a_1})}_{\llbracket  \vec{x}_1^{\vec{a_1}}. \phi_1 \rrbracket_{\gamma^*M}} \llbracket  \vec{x}_1^{\vec{a_1}}. \phi_1 \rrbracket_N) \times_{1 \times^{\gamma^*(\vec{c})}_{ \llbracket  \vec{x}^{\vec{A}}. \phi \rrbracket_{\gamma^*M}} \llbracket \vec{x}^{\vec{A}}. \phi \rrbracket_N} (1 \times^{\gamma^{\ast}(\vec{a_2})}_{\llbracket  \vec{x}_2^{\vec{a_2}}. \phi_2 \rrbracket_{\gamma^*M}} \llbracket  \vec{x}_2^{\vec{a_2}}. \phi_2 \rrbracket_N)\\
 &\simeq N^{\vec{a_1}}_{\{ \vec{x}_1^{\vec{a_1}}. \phi_1 \}} \times_{N^{\vec{c}}_{\{  \vec{x}^{\vec{A}}. \phi  \}}} N^{\vec{a_2}}_{\{ \vec{x}_2^{\vec{a_2}}. \phi_2 \}}.
\end{split}
\end{equation*}

\item For $J^{\textup{ant}}_{M}$-continuity: let 
\[ ([\theta_i] : (\vec{b},\{ \vec{x}_i^{A_i}.\phi_i \}) {\rightarrow} (\vec{a},\{  x^{\vec{A}}. \phi  \}))_{\vec{b} \in \langle \llbracket\theta_i \rrbracket_M \rangle_{i \in I}^{-1}(\vec{a}) } \] be a family in ${\cal B}_{\mathbb T}^{\textup{ant}}$. As $\gamma^*M$ and $ N$ are $\mathbb{T}$-models, they both send $ ([\theta_i])_{i \in I}$ to epimorphic families in $\mathcal{G}$. On the other hand, one can express the fiber of $a$ along the coproduct map $\langle \llbracket\theta_i \rrbracket_M \rangle_{i \in I}^{-1}$ as \[ \coprod_{\langle \llbracket\theta_i \rrbracket_M \rangle_{i \in I}^{-1}(\vec{a})} 1 \simeq \coprod_{i \in I}  \llbracket\theta_i \rrbracket_M^{-1}(\vec{a})  \rightarrow \coprod_{i \in I} \llbracket \vec{x}_i^{\vec{A}_i}.\phi_i \rrbracket_M. \] 

Moreover, this pullback is preserved by $ \gamma^*$, which sends it to 
    \[ \gamma^*( \langle \llbracket\theta_i \rrbracket_M \rangle_{i \in I}^{-1}(\vec{a})) \simeq \coprod_{i \in I} \gamma^*( \llbracket\theta_i \rrbracket_M^{-1}(\vec{a}) ) \simeq \coprod_{\langle \llbracket\theta_i \rrbracket_M \rangle_{i \in I}^{-1}(\vec{a})} \gamma^*(1) \simeq \coprod_{\langle \llbracket\theta_i \rrbracket_M \rangle_{i \in I}^{-1}(\vec{a})} 1 \]
which is the $  \langle \llbracket\theta_i \rrbracket_M \rangle_{i \in I}^{-1}(\vec{a})$-indexed coproduct of the terminal object of $ \mathcal{G}$.    
Then by the stability of coproducts under pullbacks one has 
     \begin{equation*}
        \begin{split}
            \gamma^*(\langle \llbracket\theta_i \rrbracket_M \rangle_{i \in I}^{-1}(\vec{a})) \times_{ \llbracket  \vec{x}^{\vec{A}}. \phi  \rrbracket_{\gamma^*M}} \llbracket  \vec{x}^{\vec{A}}. \phi  \rrbracket_N 
            &\simeq (\coprod_{ \vec{b} \in \langle \llbracket\theta_i \rrbracket_M \rangle_{i \in I}^{-1}(\vec{a}) } 1) \times_{ \llbracket  \vec{x}^{\vec{A}}. \phi  \rrbracket_{\gamma^*M}} \llbracket  \vec{x}^{\vec{A}}. \phi  \rrbracket_N  \\ &= \coprod_{\vec{b} \in \langle \llbracket\theta_i \rrbracket_M \rangle_{i \in I}^{-1}(\vec{a})} N^{\vec{b}}_{\{ \vec{x}_i^{\vec{A}_i}. \phi_i \}}. 
        \end{split}
    \end{equation*} 
   
    So in the diagram
    \[ \begin{tikzcd}[row sep=small, column sep=small]
\coprod_{\vec{b} \in \langle \llbracket\theta_i \rrbracket_M \rangle_{i \in I}^{-1}(\vec{a})} N^{\vec{b}}_{\{ \vec{x}_i^{\vec{A}_i}. \phi_i \}} \arrow[]{rr}{} \arrow[]{rd}{} \arrow[]{dd}{} & & \gamma^*(\langle \llbracket\theta_i \rrbracket_M\rangle_{i \in I}^{-1}(\vec{a})) \arrow[]{rd} \arrow[]{dd}{} \\
& N^{\vec{a}}_{\{ \vec{x}^{\vec{A}}. \phi \}} \arrow[]{dd}{} \arrow[crossing over]{rr}{} & & 1 \arrow[]{dd}{\gamma^*(\vec{a})} \\ 
\coprod_{i \in I} \llbracket \vec{x}_i^{\vec{A}_i}.\phi_i\rrbracket_N \arrow[two heads]{rd} \arrow[no head]{r}{\coprod_{i\in I} g_{\{ \vec{x}_i^{\vec{A}_i}.\phi_i \}}} & \arrow[]{r}{} & \coprod_{i \in I} \llbracket \vec{x}_i^{\vec{A}_i}.\phi_i\rrbracket_{\gamma^*(M)} \arrow[two heads]{rd} & 
\\
& \llbracket \vec{x}^{\vec{A}}.\phi\rrbracket_N \arrow[]{rr}{g_{\{ \vec{x}^{\vec{A}}.\phi \}}} & & \llbracket \vec{x}^{\vec{A}}.\phi\rrbracket_{\gamma^*(M)} 
    \end{tikzcd}\]
    the front, right and back squares are pullbacks, whence the left square is a pullback too: but this forces the upper left arrow 
    \[ \coprod_{\vec{b} \in \langle \llbracket\theta_i \rrbracket_M \rangle_{i \in I}^{-1}(\vec{a})} N^{\vec{b}}_{\{ \vec{x}_i^{\vec{A}_i}. \phi_i \}} \rightarrow  N^{\vec{a}}_{\{ \vec{x}^{\vec{A}}. \phi \}} \]
    to be an epimorphism by the stability of epimorphisms under pullback in $\mathcal{G}$. 
\end{itemize}

The data of the $N^a_{\{ \vec{x}^A. \phi \}}$'s with their transitions morphisms define a $\Set_{M}$-structure $S_{g}$: 
\\
    \[ \begin{array}{ccc}
     S_{(\vec{a}, \{ \vec{x}^{\vec{A}}. \phi \})} & \mapsto & N_{\{ \vec{x}^{\vec{A}}. \phi \}}^{\vec{a}}  \\
     f^{\vec{a_1},\vec{a_2}}_{\theta}  & \mapsto & N^{\vec{a_1},\vec{a_2}}_{\lbrack\theta\rbrack}:  N_{ \{ \vec{x}_1^{\vec{a_1}}.\phi_1 \}}^{\vec{a_1}} \to N_{ \{ \vec{x}_2^{\vec{a_2}}.\phi_2 \}}^{\vec{a_2}}
\end{array}  \]

This structure is actually a ${\mathbb T}_{M}$-model. Indeed, this follows at once from the fact that the interpretation in $S_{g}$ of any geometric formula $\psi$ over the signature $S_{M}$ can be expressed as a pullback of the interpretation of the corresponding formula $\psi^{\sharp}$ in the ${\cal L}^{c}_{M}$-structure $M^{c}$, as in the following diagram:

\[\begin{tikzcd}
	{[[z_{1}^{S_{(\vec{a_1}, \{ \vec{x_1}^{\vec{A_1}}. \phi_1 \})}}, \ldots , z_{n}^{S_{(\vec{a}_n, \{ \vec{x_n}^{\vec{A_n}}. \phi_n \})}}. \psi]]_{S_{g}}} && {[[[]. \psi^\sharp]]_{M^{c}}} \\
	{[[z_{1}^{S_{(\vec{a_1}, \{ \vec{x_1}^{\vec{A_1}}. \phi_1 \})}}, \ldots , z_{n}^{S_{(\vec{a}_n, \{ \vec{x_n}^{\vec{A_n}}. \phi_n \})}}. \top]]_{S_{g}}} && 1 \\
	\\
	{[[\vec{x_{1}}^{\vec{A_{1}}}, \ldots, \vec{x_{n}}^{\vec{A_{n}}}.\top]]_{N}} && {[[\vec{x_{1}}^{\vec{A_{1}}}, \ldots, \vec{x_{n}}^{\vec{A_{n}}}. \top]]_{\gamma^{\ast}(M)}}
	\arrow[from=1-1, to=1-3]
	\arrow[tail, from=1-3, to=2-3]
	\arrow["{g\vec{A_1}\times \ldots \times g\vec{A_n}}", from=4-1, to=4-3]
	\arrow["\lrcorner"{anchor=center, pos=0.125}, draw=none, from=1-1, to=4-3]
	\arrow["{<\vec{a_1}, \ldots, \vec{a_n}>}"{description}, tail, from=2-3, to=4-3]
	\arrow[tail, from=1-1, to=2-1]
	\arrow[tail, from=2-1, to=4-1]
	\arrow[from=2-1, to=2-3]
	\arrow["\lrcorner"{anchor=center, pos=0.125}, draw=none, from=2-1, to=4-3]
\end{tikzcd}\]

This analysis shows that a simple axiomatization for the theory ${\mathbb T}_{M}$ may be obtained by phrasing in logical terms the property that the functor $V:{\int M}\to {\cal C}_{{\mathbb T}_{M}}$ in the statement of the theorem be cartesian and cover-preserving. Recalling the well-known characterizations of pullbacks and terminal objects in the internal language of a topos, this leads to the following axioms for ${\mathbb T}_{M}$: 
\[
\big{(}\top \vdash_{[]} (\exists x^{S_{(\vec{a}, \{[]. \top \})}}) \top\big{)};
\]
\[
\big{(}\top \vdash_{x^{S_{(\vec{a}, \{[]. \top \})}}, x'^{S_{(\vec{a}, \{[]. \top \})}}} (x=x')\big{)};
\]
\[
\big{(}\top \vdash_{z^{S_{({\vec{a_1},\vec{a_2}}}, \{\vec{x_1'}^{\vec{a_1}},\vec{x_2'}^{\vec{a_2}}. \theta_1(\vec{x_1'})=\theta_2(\vec{x_2'})\})}}   f^{\vec{a_{1}}, \vec{c}}_{\theta_1}(f^{(\vec{a_1}, \vec{a_2}), \vec{a_{1}}}_{\vec{x_{1}}=\vec{x_{1}'}}(z))= f^{\vec{a_{2}}, \vec{c}}_{\theta_1}(f^{(\vec{a_1}, \vec{a_2}), \vec{a_{2}}}_{\vec{x_{2}}=\vec{x_{2}'}}(z)))\big{)},
\]
\[
\big{(}f^{(\vec{a_1}, \vec{a_2}), \vec{a_{1}}}_{\vec{x_{1}}=\vec{x_{1}'}}(z)=f^{(\vec{a_1}, \vec{a_2}), \vec{a_{1}}}_{\vec{x_{1}}=\vec{x_{1}'}}(z')) \wedge (f^{(\vec{a_1}, \vec{a_2}), \vec{a_{2}}}_{\vec{x_{2}}=\vec{x_{2}'}}(z)=f^{(\vec{a_1}, \vec{a_2}), \vec{a_{2}}}_{\vec{x_{2}}=\vec{x_{2}'}}(z')) \vdash_{z, z'} z=z'\big{)},
\]
\[
\big{(}f^{\vec{a_{1}}, \vec{c}}_{\theta_1}(y_1)=f^{\vec{a_{2}}, \vec{c}}_{\theta_2}(y_2) \vdash_{y_1^{S_{(\vec{a_1}, \{ \vec{x_1}^{\vec{A_1}}. \phi_1 \})}}, y_2^{S_{(\vec{a_2}, \{ \vec{x_2}^{\vec{A_2}}. \phi_2 \}) } }} (\exists z)((f^{(\vec{a_1}, \vec{a_2}), \vec{a_{1}}}_{\vec{x_{1}}=\vec{x_{1}'}}(z)=y_1) \wedge (f^{(\vec{a_1}, \vec{a_2}), \vec{a_{2}}}_{\vec{x_{2}}=\vec{x_{2}'}}(z)=y_2))\big{)} 
\]
for any $\vec{a_1} \in  \llbracket \vec{x}_1^{\vec{a_1}}.\phi_1\rrbracket_M $, $\vec{a_2} \in  \llbracket \vec{x}_2^{\vec{a_2}}.\phi_2\rrbracket_M$ such that $\llbracket \theta_1 \rrbracket_M(\vec{a_1})=\vec{c}=\llbracket \theta_2 \rrbracket_M(\vec{a_2})$; 
\[
\big{(} \top \vdash_{y^{S_(\vec{a}, {\{ \vec{x}^{\vec{A}}. \phi \}}))}} 
\bigvee_{i\in I, \vec{b_i} \in \llbracket\theta_i \rrbracket_M^{-1}(\vec{a})} (\exists z^{S_{(\vec{b_i},\{\vec{x}_i^{\vec{A}_i}. \phi_i \})}} ) (f_{\theta_i}^{\vec{b_i}, \vec{a}}(z)=y)\big{)}
\]
for any family $\{[\theta_i]:  \{ \vec{x_i}^{\vec{A_i}}. \phi_i \} \to \{ \vec{x}^{\vec{A}}. \phi \} \mid i\in I \}\in {\cal B}_{\mathbb T}(\{ \vec{x}^{\vec{A}}. \phi \})$ and any $\vec{a}\in \llbracket \vec{x}^{\vec{A}}.\phi\rrbracket_M$.

The first two axioms express the fact that $N^{\ast}_{\{[]. \top\}}=1$, the following three express the fact that for any $\vec{a_1} \in  \llbracket \vec{x}_1^{\vec{a_1}}.\phi_1\rrbracket_M $, $\vec{a_2} \in  \llbracket \vec{x}_2^{\vec{a_2}}.\phi_2\rrbracket_M$ such that $\llbracket \theta_1 \rrbracket_M(\vec{a_1})=\vec{c}=\llbracket \theta_2 \rrbracket_M(\vec{a_2})$, we have a pullback square
\[\begin{tikzcd}
	{N_{\{ \vec{x_1'}^{\vec{a_1}},\vec{x_2'}^{\vec{a_2}}. \theta_1(\vec{x_1'})=\theta_2(\vec{x_2'}) \}}^{\vec{a_1},\vec{a_2}}} && {N^{\vec{a_1}}_{\{ \vec{x}_1^{\vec{a_1}}. \phi_1 \}}} \\
	\\
	{N^{\vec{a_2}}_{\{ \vec{x}_2^{\vec{a_2}}. \phi_2 \}}} && {N^{\vec{c}}_{\{  \vec{x}^{\vec{A}}. \phi  \}}}
	\arrow["{N^{\vec{a_{1}}, \vec{c}}_{\theta_1}}", from=1-3, to=3-3]
	\arrow["{N^{\vec{a_{2}}, \vec{c}}_{\theta_2}}", from=3-1, to=3-3]
	\arrow["{N^{(\vec{a_1}, \vec{a_2}), \vec{a_{2}}}_{\vec{x_{2}}=\vec{x_{2}'}}}", from=1-1, to=3-1]
	\arrow["{N^{(\vec{a_1}, \vec{a_2}), \vec{a_{1}}}_{\vec{x_{1}}=\vec{x_{1}'}}}", from=1-1, to=1-3]
\end{tikzcd}\]

and the last one corresponds to the property of $V$ being cover-preserving.

It is immediate to see that the $J_{{\mathbb T}_{M}}$-continuous cartesian functor $F_{S_{g}}: {\cal C}_{{\mathbb T}_{M}}\to {\cal G}$ corresponding to the ${\mathbb T}_{M}$-model $S_{g}$ is given by $\overline{g}\circ F_{U}$ and that, conversely, the flat functor $\overline{g}$ can be recovered from the ${\mathbb T}_{M}$-model $S_{g}$ as the composite $F_{S_{g}}\circ V$.

Let us now show that, in the other direction, given families 
\[
{\big{(} N^{\vec{a}}_{ \{\vec{x}_1^{\vec{A}}.\phi\}}\big{)}}_{ (\vec{a}, \{ \vec{x}^{\vec{A}}.\phi \})\in {\int M}}
\]
and 
\[
\big{(}f^{\vec{a_1},\vec{a_2}}_{\theta}: N_{ \{ \vec{x}_1^{\vec{a_1}}.\phi_1 \}}^{\vec{a_1}} \to N_{ \{ \vec{x}_2^{\vec{a_2}}.\phi_2 \}}^{\vec{a_2}} \big{)}_{[\theta]:\{\vec{x_1}. \phi_1\} \to \{\vec{x_1}. \phi_1\}  \in \mathcal{C}_\mathbb{T}\textup{ with } [[\theta]]_{M}(\vec{a_1})=\vec{a_2}}
\]
respectively of objects and arrows in $\mathcal{G}$ defining a $J^{\textup{ant}}_{M}$-continuous flat (equivalently, cartesian) functor $G:{\int M}\to {\cal G}$, or, equivalently a ${\mathbb T}_{M}$-model in $\cal G$, we can associate with it a $\mathbb T$-model $N$ in $\cal G$ and a homomorphism of $\mathbb T$-models $g:N \to \gamma^{\ast}(M)$. 

First, for each each $\{ \vec{x}^A.\phi \}$ and $\vec{a} \in \llbracket \vec{x}^{A}.\phi\rrbracket_M $, we set $g_{\{ \vec{x}^A.\phi\}}^{\vec{a}}$ equal to the composite arrow
\[ 
\begin{tikzcd}
N_{\{\vec{x}^{\vec{A}}.\phi\}}^{\vec{a}} \arrow[]{r}{!} \arrow[]{rd}[swap]{g_{\{ \vec{x}^A.\phi\}}^{\vec{a}}} & 1 \arrow[]{d}{\gamma^*(\vec{a})} \\ & \llbracket \vec{x}^{\vec{A}}.\phi\rrbracket_{\gamma^*(M)}  
\end{tikzcd}
\]

Then we define, for each object $\{ \vec{x}^A.\phi \}$ of $ \mathcal{C}_\mathbb{T}$, $g_{\{ \vec{x}^{ \vec{A}}.\phi \}}$ as the arrow determined by the universal property of the coproduct as in the following diagrams (where the vertical arrows are the canonical coproduct inclusions):
\[ \begin{tikzcd} 
N_{ \{ \vec{x}^{ \vec{A}}.\phi \}}^{ \vec{a}} \arrow[]{rd}{g_{ \{ \vec{x}^{ \vec{A}}.\phi \}}^{ \vec{a}}} \arrow[]{d}{} & \\ 
\underset{\vec{a} \in \llbracket \vec{x}^A.\phi\rrbracket_M}{\coprod} N_{ \{ \vec{x}^{ \vec{A}}.\phi \}}^{ \vec{a}} \arrow[dashed]{r}[swap]{g_{\{ \vec{x}^{ \vec{A}}.\phi \}}} & \llbracket \vec{x}^{{ \vec{A}}}.\phi\rrbracket_{\gamma^*(M)}
\end{tikzcd}\]

We need to ensure that: \begin{itemize}
    \item we have a $J_\mathbb{T}$-continuous cartesian functor: 
    \[ \begin{array}{rcl}
        \mathcal{C}_\mathbb{T} & \stackrel{F_{N}}{\longrightarrow} & \mathcal{G}  \\
        \{ \vec{x}^{ \vec{A}}.\phi \} & \mapsto & N_{\{ \vec{x}^{ \vec{A}}.\phi \}} = \underset{a \in \llbracket x^{\vec{A}}.\phi\rrbracket_M}{\coprod} N_{ \{ \vec{x}^{ \vec{A}}.\phi \}}^{ \vec{a}} \\
        \theta:\{\vec{x_1}^{\vec{A_1}}. \phi_1\} \to \{\vec{x_2}^{\vec{A_2}}. \phi_2\} & \mapsto & \underset{\vec{a_1} \in \llbracket x_1^{\vec{A_1}}.\phi\rrbracket_M}{\coprod} f^{\vec{a_1}, [[\theta]]_{M}(\vec{a_1})}:  \underset{\vec{a_1} \in \llbracket x_1^{\vec{A_1}}.\phi\rrbracket_M}{\coprod}     N_{ \{ \vec{x_1}^{ \vec{A_1}}.\phi_1 \}}^{ \vec{a_1}} \to \underset{\vec{a_2} \in \llbracket x_2^{\vec{A_1}}.\phi\rrbracket_M}{\coprod} N_{ \{ \vec{x_2}^{ \vec{A_2}}.\phi_2 \}}^{\vec{a_2}}
    \end{array} \]
    and hence a $\mathbb T$-model $N$ in $\cal G$;
    
    \item we have a natural transformation 
    \[ g = (g_{\{ \vec{x}^{ \vec{A}}.\phi \}})_{\{ \vec{x}^{ \vec{A}}.\phi \} \in \mathcal{C}_\mathbb{T}} : N \rightarrow \gamma^*M. \]
\end{itemize}

This amounts to the following conditions:

\begin{itemize}
    \item If $ \{ []. \top \} $ is the terminal object of the syntactic site, then its interpretation in $M$ has exactly one global element $ 1 \rightarrow \llbracket [], \top  \rrbracket_M = 1 $ so requiring $F_{N}$ to preserve the terminal object is equivalent to demanding $ N^*_{ \{ []. \top \} } = 1$, which is ensured by the fact that the functor $G$ preserves the terminal object by our hypotheses.
    
    \item Pullbacks in $ \mathcal{C}_\mathbb{T}$ 
    \[ \begin{tikzcd}
    \{ \vec{x_1}^{\vec{A_1}},\vec{x_2}^{\vec{A_2}}. \theta_1(\vec{x_1})=\theta_2(\vec{x_2}) \}  \arrow[phantom, very near start]{rd}{\lrcorner} \arrow[]{r}{} \arrow[]{d}{} & \{ \vec{x_1}^{\vec{A_1}}. \phi_1 \} \arrow[]{d}{\lbrack \theta_1 \rbrack} \\  \{ \vec{x_2}^{\vec{A_2}}. \phi_2 \} \arrow[]{r}{\lbrack \theta_2 \rbrack} & \{  \vec{x}^{\vec{A}}. \phi  \}
    \end{tikzcd}\]
    are sent by $F_{M}$ to pullbacks in $\Set$
    \[ \begin{tikzcd}
   \llbracket  \vec{x_1}^{\vec{A_1}},\vec{x_2}^{\vec{A_2}}. \theta_1(\vec{x_1})=\theta_2(\vec{x_2}) \rrbracket_M \arrow[]{r}{} \arrow[phantom, very near start]{rd}{\lrcorner}  \arrow[]{d}{} & \llbracket  \vec{x_1}^{\vec{A_1}}. \phi_1 \rrbracket_M \arrow[]{d}{\llbracket \theta_1 \rrbracket_M} \\  \llbracket \vec{x_2}^{\vec{A_2}}. \phi_2 \rrbracket_M \arrow[]{r}{\llbracket \theta_2 \rrbracket_M} & \llbracket   \vec{x}^{\vec{A}}. \phi  \rrbracket_M,
    \end{tikzcd}\]
    where elements of $ \llbracket  \vec{x_1}^{\vec{A_1}},\vec{x_2}^{\vec{A_2}}. \theta_1(\vec{x_1})=\theta_2(\vec{x_2}) \rrbracket_M$ are pairs $ (\vec{a_1},\vec{a_2}) : 1 \rightarrow \llbracket \vec{x_1}^{\vec{A_1}}. \phi_1 \rrbracket_M \times 
\llbracket \vec{x_2}^{\vec{A_2}}. \phi_2 \rrbracket_M  $ such that $ \llbracket \theta_1 \rrbracket_M (\vec{a_1}) = \vec{a} = \llbracket \theta_2 \rrbracket_M (\vec{a_2})$. Now, if all the squares of the form 
\[ \begin{tikzcd}
    N_{\{ \vec{x_1}^{\vec{A_1}},\vec{x_2}^{\vec{A_2}}. \theta_1(\vec{x_1})=\theta_2(\vec{x_2}) \}}^{\vec{a_1},\vec{a_2}} \arrow[]{r}{} \arrow[]{d}{} & N_{\{\vec{x_1}^{\vec{A_1}}. \phi_1 \}}^{\vec{a_1}} \arrow[]{d}{N^{\vec{a_1},\vec{a}}_{\lbrack \theta_1 \rbrack}} \\  N_{\{\vec{x_2}^{\vec{A_2}}. \phi_2 \}}^{\vec{a_2}} \arrow[]{r}{N^{\vec{a_2},\vec{a}}_{\lbrack \theta_2 \rbrack}} & N_{\{\vec{x}^{\vec{A}}. \phi  \}}^{\vec{a}}
    \end{tikzcd}\]
are pullbacks, which is the case since the functor $G$ preserves pullbacks by our hypotheses, then, by the stability of coproducts along pullbacks, we have 
\begin{equation*}
    \begin{split}
      N_{\{ \vec{x_1}^{\vec{A_1}},\vec{x_2}^{\vec{A_2}}. \theta_1(\vec{x_1})=\theta_2(\vec{x_2}) \}} &= \coprod_{\llbracket  \vec{x_1}^{\vec{A_1}},\vec{x_2}^{\vec{A_2}}. \theta_1(\vec{x_1})=\theta_2(\vec{x_2}) \rrbracket_M}  N_{\{ \vec{x_1}^{\vec{A_1}},\vec{x_2}^{\vec{A_2}}. \theta_1(\vec{x_1})=\theta_2(\vec{x_2}) \}}^{\vec{a_1},\vec{a_2}} \\
      &=  \coprod_{\llbracket  \vec{x_1}^{\vec{A_1}},\vec{x_2}^{\vec{A_2}}. \theta_1(\vec{x_1})=\theta_2(\vec{x_2}) \rrbracket_M}  N_{\{\vec{x_1}^{\vec{A_1}}. \phi_1 \}}^{\vec{a_1}}   \times_{N_{\{\vec{x}^{\vec{A}}. \phi  \}}^{\vec{a}} }  N_{\{ \vec{x_2}^{\vec{A_2}}. \phi_2 \}}^{\vec{a_2}}  \\  
      &= \coprod_{\llbracket  \vec{x_1}^{\vec{A_1}}. \phi_1 \rrbracket_M} N_{\{ \vec{x_1}^{\vec{A_1}}. \phi_1 \}}^{\vec{a_1}}  \times_{\underset{\vec{a} \in \llbracket   \vec{x}^{\vec{A}}. \phi  \rrbracket_M}{\coprod} N_{\{  \vec{x}^{\vec{A}}. \phi  \}}^{\vec{a}} }  \coprod_{\llbracket  \vec{x_2}^{\vec{A_2}}. \phi_2 \rrbracket_M} N_{\{ \vec{x_2}^{\vec{A_2}}. \phi_2 \}}^{\vec{a_2}}\\
      &= N_{\{ \vec{x_1}^{\vec{A_1}}. \phi_1 \}} \times_{N_{\{  \vec{x}^{\vec{A}}. \phi  \}}} N_{\{ \vec{x_2}^{\vec{A_2}}. \phi_2 \}}.
    \end{split}
\end{equation*}

\item $ J_\mathbb{T}$-continuity: given a small $J_{\mathbb T}$-cover $( \lbrack \theta_i \rbrack :  \{\vec{x_i}^{\vec{A_i}}. \phi_i \} \rightarrow \{\vec{x}^{\vec{A}}. \phi \} )_{i \in I}$, since $M$ is a $\mathbb T$-model, the corresponding functor $F_{M}:{\cal C}_{\mathbb T} \to {\cal G}$ sends it to a jointly epimorphic family  $( \llbracket \theta_i \rrbracket_M :  \llbracket \vec{x_i}^{\vec{A_i}}. \phi_i \rrbracket_M \rightarrow \llbracket \vec{x}^{\vec{A}}. \phi \rrbracket_M )_{i \in I}$; so each $\vec{a} \in \llbracket x^{A}. \phi \rrbracket_M$ has an antecedent $\vec{b} \in \llbracket \vec{x_i}^{\vec{A_i}}. \phi_i \rrbracket_M$ for some $i \in I$. Therefore for each $\vec{a}\in \llbracket \vec{x}. \phi \rrbracket$ the cocone of fibers  
\[ 
\big{(}N_{\{\vec{x_i}^{\vec{A_i}}. \phi_i\}}^{\vec{b}} \stackrel{N^{\vec{b},\vec{a}}_{\lbrack \theta_i \rbrack}}{\longrightarrow} N_{\{ \vec{x}^{\vec{A}}. \phi \}}^{\vec{a}}\big{)}_{i \in I, \; \llbracket \theta_i \rrbracket_M(\vec{b})=\vec{a}} 
\]
in each of its antecedents is jointly epimorphic in $\mathcal{G}$ if and only if the coproduct of fibers \[\big{(}   N_{\{\vec{x_i}^{\vec{A_i}}. \phi_i \}} \stackrel{N_{\lbrack \theta_i \rbrack}}{\longrightarrow} N_{\{\vec{x}^{\vec{A}}. \phi \}}\big{)}_{i\in I}  \] is. But this exactly amounts to requiring the following functor to be $ J^{\textup{ant}}_{M}$-continuous:
\[ \begin{array}{ccc}
    \int M & \stackrel{G}{\rightarrow}& \mathcal{G} \\
    (\vec{a}, \{ \vec{x}^{\vec{A}}. \phi \}) & \mapsto & N_{\{ \vec{x}^{\vec{A}}. \phi \}}^{\vec{a}} \\
     \lbrack\theta\rbrack  & \mapsto & N^{\vec{a_1},\vec{a_2}}_{\lbrack\theta\rbrack}
\end{array}  \]

\end{itemize}
The naturality of $g$ follows immediately from the definition of the functor $F_{N}$ on the arrows of ${\cal C}_{\mathbb T}$.

To conclude our proof of the categorical equivalence between flat $J^{\textup{ant}}_{M}$-continuous functors on ${\int M}$ and $\mathbb T$-model homomorphisms to $\gamma^{\ast}(M)$, we have to check that the two functors defined above are mutually quasi-inverse. For the construction starting from a homomorphism of $ \mathcal{L}$-structures between $ \mathbb{T}$-models $ g : N \rightarrow \gamma^*M$, observe that as the codomain of $g$ decomposes as the coproduct of all elements $\llbracket \vec{x}^{\vec{A}}.\phi\rrbracket_{\gamma^*M} =
\coprod_{ \vec{a}  : 1 \rightarrow  \llbracket \vec{x}^{\vec{A}}.\phi \rrbracket_M} 1$, by the stability of coproducts under pullbacks one has 
\[ N_{ \{ \vec{x}^{ \vec{A}}.\phi \}} \simeq
\underset{\vec{a} \in \llbracket \vec{x}^A.\phi\rrbracket_M}{\coprod} N_{ \{ \vec{x}^{ \vec{A}}.\phi \}}^{ \vec{a}}. \]
For the converse process, if one starts with a $J^{\textup{ant}}_{M}$-continuous flat functor $ N_{(-)} : \int M \rightarrow \mathcal{G}$ and defines, for each $ \{ \vec{x}^{ \vec{A}}.\phi \}$, $N_{ \{ \vec{x}^{ \vec{A}}.\phi \}}$ as the above coproduct, then pulling it back along $ \gamma^*(\vec{a}): 1 \rightarrow \llbracket  \vec{x}^{ \vec{A}}.\phi  \rrbracket_{ \gamma^*M}  $, one has (again, by the stability of coproducts under pullback)
\[ \gamma^*(\vec{a})^*( \underset{\vec{b} \in \llbracket \vec{x}^A.\phi\rrbracket_M}{\coprod} N_{ \{ \vec{x}^{ \vec{A}}.\phi \}}^{ \vec{b}}) \simeq \underset{\vec{b} \in \llbracket \vec{x}^A.\phi\rrbracket_M}{\coprod} \gamma^*(\vec{a})^*(N_{ \{ \vec{x}^{ \vec{A}}.\phi \}}^{ \vec{b}}). \]
For any element $\vec{b}$ of $\llbracket \vec{x}^{ \vec{A}}.\phi  \rrbracket_{M}$, we have the following pullback squares:
\[ 
\begin{tikzcd}
\gamma^*(\vec{a})^*(N_{ \{ \vec{x}^{ \vec{A}}.\phi \}}^{ \vec{b}}) \arrow[d] \arrow[r] \arrow[rd, "\lrcorner", very near start, phantom] & {1 \times^{\gamma^*(\vec{a}), \gamma^*(\vec{b})}_{\llbracket  \vec{x}^{ \vec{A}}.\phi  \rrbracket_{ \gamma^*M}} 1} \arrow[d] \arrow[r] \arrow[rd, "\lrcorner", very near start, phantom] & 1 \arrow[d, "\gamma^*(\vec{a})"]                                                 \\
N_{ \{ \vec{x}^{ \vec{A}}.\phi \}}^{ \vec{b}} \arrow[r]                                            & 1 \arrow[r, "\gamma^*(\vec{b})"]                                                                                                                                        & \llbracket  \vec{x}^{ \vec{A}}.\phi\rrbracket_{ \gamma^*M}
\end{tikzcd} \]
Now, there are two possible values for the pullback on the right-hand side:
\[ 
\begin{tikzcd}[column sep= huge]
0 \arrow[d] \arrow[r] \arrow[rd, "\lrcorner \atop \textrm{ whenever } \vec{a_1} \neq \vec{a_2}", phantom] & 1 \arrow[d, "\gamma^\ast(\vec{a_2})"]                                       &  & 1 \arrow[d, equal] \arrow[r, equal] \arrow[rd, "\lrcorner \atop \textrm{ whenever } \vec{a_1} = \vec{a_2}", phantom] & 1 \arrow[d, "\gamma^{\ast}(\vec{a_2})"]                                       \\
1 \arrow[r, "\gamma^{\ast}(\vec{a_1})"]                                                                            & \llbracket \vec{x}^{\vec{A}}.\phi \rrbracket_{\gamma^*M} &  & 1 \arrow[r, "\gamma^{\ast}(\vec{a_1})"]                                                                         & \llbracket \vec{x}^{\vec{A}}.\phi \rrbracket_{\gamma^*M}
\end{tikzcd} \]
So the middle pullback is the initial object whenever $\vec{a} \neq \vec{b}$, whence $\gamma^*(\vec{a})^*(N_{ \{ \vec{x}^{ \vec{A}}.\phi \}}^{ \vec{b}}) \cong 0 $; on the other hand, $\gamma^*(\vec{a})^*(N_{ \{ \vec{x}^{ \vec{A}}.\phi \}}^{ \vec{b}}) \cong N_{ \{ \vec{x}^{ \vec{A}}.\phi \}}^{ \vec{a}}$ whenever $\vec{a}=\vec{b}$. This clearly implies our thesis.
\end{proof}

\begin{remark}
We have made use of specific properties of $\Set$ at several steps of the above constructions and proofs: 
\begin{itemize}
    \item In the definition of the antecedent topology, we only had to consider global elements of the sets $ \llbracket \vec{x}^{\vec{A}}.\phi\rrbracket_{M}$. This is because $1$ is a generator of $ \Set$, so that generalized elements would just be coproducts of global elements.
    
    \item As a consequence, the very notion of antecedent element is simplified. As we shall see in section 4, considering antecedents of generalized elements gives rise to complications when considering jointly epimorphic families, as antecedents may be indexed by other objects of the topos than the domain of the generalized element we search antecedents of. In this case, we just had to consider the global elements of the fiber of a global element; in other words, the antecedent topology exists already in the comma category $(1 \downarrow F_{M})$; in the general case, it will be scattered on the fibers of a comma $(\y \downarrow F_{M})$, for $\y $ the Yoneda embedding of a small, cartesian subcanonical site for the given topos.
    
    \item We also used that $1$ was indecomposable to retrieve global elements of the fibers of a jointly surjective family. This is not anymore a valid argument in an arbitrary Grothendieck topos.  
\end{itemize}
\end{remark}

\section{An interlude on stacks}

We will turn in the next section to the construction of the over-topos at a model in the general case, where the base topos in which the given model lives is an arbitrary Grothendieck topos. We chose to treat this general case separately as it requires Giraud's theory of the classifying topos of a stack, while the set-valuated case required more conventional tools. In this section we present a number of results on stacks that we shall need in our analysis.

Given a pseudofunctor ${\mathbb I}:{\cal C}^{\textup{op}}\to \textup{Cat}$, we will denote by $\pi_{\mathbb I}$ the canonical projection functor ${\cal G}({\mathbb I}) \to {\cal C}$, where ${\cal G}({\mathbb I})$ is the category obtained from $\mathbb I$ by applying the Grothendieck construction. Given a Grothendieck topology $J$ on $\cal C$, there is a smallest topology $J^{\textup{Gir}}_{\mathbb I}$ on ${\cal G}({\mathbb I})$ which makes $\pi_{\mathbb I}$ a comorphism of sites to $({\cal C}, J)$; this topology, which we call the \emph{Giraud topology}, has as covering sieves those which contain cartesian lifts of $J$-covering families in ${\cal C}$.

Let $f:{\cal F}\to {\cal E}$ be a geometric morphism. There are three pseudofunctors naturally associated with it:

\begin{itemize}
\item We define 
\[
t_{f}:{\cal F}^{\textup{op}}\to \textup{Cat}
\]	
as the functor sending any object $F$ of $\cal F$ to the category $(F \downarrow f^{\ast})$ and any arrow $v:F\to F'$ to the functor $(F' \downarrow f^{\ast})\to (F \downarrow f^{\ast})$ induced by composition with $v$.

\item We define  
\[
r_{f}:{\cal E}^{\textup{op}}\to \textup{Cat}
\]
as the pseudofunctor sending an object $E$ of $\cal E$ to ${\cal F}\slash f^{\ast}(E)$ and an arrow $u:E\to E'$ to the pullback functor $(f^{\ast}(u))^{\ast}:{\cal F}\slash f^{\ast}(E') \to {\cal F}\slash f^{\ast}(E)$.

\item We define 
\[
s_{f}:{\cal E} \to \textup{Cat}
\]
as the functor sending an object $E$ of $\cal E$ to ${\cal F}\slash f^{\ast}(E)$ and an arrow $u:E\to E'$ to the functor $\Sigma_{f^{\ast}(u)}:{\cal F}\slash f^{\ast}(E) \to {\cal F}\slash f^{\ast}(E')$ induced by composition with $f^{\ast}(u)$ (which is left adjoint to the pullback functor $(f^{\ast}(u))^{\ast}:{\cal F}\slash f^{\ast}(E') \to {\cal F}\slash f^{\ast}(E)$).
\end{itemize}

By the adjunction between $\Sigma_{f^{\ast}(u)}$ and $(f^{\ast}(u))^{\ast}$ (for any arrow $u$ in $\cal E$), the fibration to $\cal E$ associated with $r_{f}$ coincides with the opfibration associated with $s_{f}$; in particular, this functor is both a fibration and an opfibration. This fibration is a stack for the canonical topology on $\cal E$ by the results in \cite{giraud}. Note that the domain of this fibration also admits a canonical functor to $\cal F$, which is precisely the fibration associated with $t_{f}$. 
 
\begin{proposition}
	$t_{f}$ is a (split) stack for the canonical topology on $\cal F$.
\end{proposition}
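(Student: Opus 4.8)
The claim is that $t_f: {\cal F}^{\textup{op}} \to \textup{Cat}$ is a (split) stack for the canonical topology on $\cal F$. Since $t_f$ is evidently split (composition is strictly associative and unital), the content is the stack (= effective descent) condition: for every object $F$ of $\cal F$ and every canonical-covering sieve $S$ on $F$, the canonical functor
$$
(F \downarrow f^{\ast}) \longrightarrow \textup{Desc}(S, t_f)
$$
into the category of descent data should be an equivalence. My strategy would be to recognize $(F \downarrow f^{\ast})$ as a categorical limit that is manifestly compatible with colimits in $F$, and then invoke the fact that canonical covers in a Grothendieck topos are exactly effective-epimorphic (indeed universally effective-epimorphic) families, so that $F$ is the colimit of the Čech-type diagram associated to any covering family $\{F_i \to F\}$.

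**Key steps.** First I would reduce, as usual, from sieves to covering families $\{v_i : F_i \to F\}_{i\in I}$, and further — using that a Grothendieck topos is extensive and that the canonical topology is generated by such — to the case of a \emph{single} epimorphism $v: F' \epi F$ together with its kernel pair $F' \times_F F' \rightrightarrows F'$; the general descent statement then follows by the standard argument relating sieve-descent to the descent along the associated hypercover/Čech nerve, together with extensivity. Second, and this is the crux, I would observe that $(F \downarrow f^{\ast})$ sends colimits in the variable $F$ (computed in $\cal F$) to limits: concretely, since $f^{\ast}$ has a right adjoint $f_{\ast}$, an object of $(F\downarrow f^{\ast})$ is an arrow $F \to f^{\ast}(E)$, equivalently (by adjunction applied pointwise, though here we keep it on the $\cal F$-side) a cocone under the constant-$F$ diagram; hence $t_f$ carries the coequalizer $F' \times_F F' \rightrightarrows F' \to F$ to an equalizer of categories
$$
(F \downarrow f^{\ast}) \longrightarrow (F' \downarrow f^{\ast}) \rightrightarrows (F' \times_F F' \downarrow f^{\ast}),
$$
and more generally to the limit over the truncated Čech nerve that computes $\textup{Desc}$. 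Third, I would check that this equalizer/limit of comma categories is precisely the category of descent data $\textup{Desc}(S, t_f)$ for the corresponding sieve — this is a bookkeeping identification: a descent datum is an object $x \to f^{\ast}(E')$ of $(F'\downarrow f^{\ast})$ together with a gluing isomorphism over the kernel pair satisfying the cocycle identity, which is exactly an object of the displayed equalizer because all the transition functors in $t_f$ are given by precomposition and hence strictly compatible. Finally, fullness and faithfulness of $(F\downarrow f^{\ast}) \to \textup{Desc}(S,t_f)$, and essential surjectivity, both drop out of the universal property of the colimit $F = \textup{colim}$ of its Čech nerve, using that this colimit is \emph{universal} (stable under pullback) in the topos $\cal F$ — which is what guarantees the descent is effective and not merely that the comparison is fully faithful.

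**Main obstacle.** The routine parts — splitness, the translation between sieve-descent and Čech-descent, the fact that comma categories turn colimits into limits — are all formal. The real point to get right is the \emph{effectivity}: establishing that every object of $\textup{Desc}(S, t_f)$ actually descends to a genuine object of $(F\downarrow f^{\ast})$. This is where one must use that $\cal F$ is a Grothendieck topos in an essential way (via van Kampen / universality of colimits, equivalently Giraud's axioms), rather than merely that $f^{\ast}$ preserves colimits; a descent datum consists of an object $G'$ over $f^*(E')$ with descent data along the cover, and one must build $G$ over $F$ by gluing $G'$ along the covering — the gluing exists and is well-behaved precisely because colimits in $\cal F$ are universal. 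I would present this by reducing to the analogous, already-cited fact (from \cite{giraud}) that $r_f$/$s_f$ is a stack for the canonical topology on $\cal E$: indeed the fibration associated to $t_f$ sits over the one associated to $r_f$ via the canonical functor to $\cal F$ mentioned just before the proposition, and one can transport the descent computation along $f$, so that the effectivity for $t_f$ follows from effectivity for the slice stacks ${\cal F}/f^{\ast}(-)$, which is exactly Giraud's result. That reduction is, I expect, the cleanest route and sidesteps re-proving van Kampen from scratch.
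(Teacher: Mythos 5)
Your local computation for a single epimorphism is sound and is close in spirit to the paper's argument: since the transition functors of $t_f$ only precompose the structure map and leave the ${\cal E}$-component untouched, a descent datum along $F'\twoheadrightarrow F$ reduces to an automorphism of $E'$ which the cocycle condition forces to be the identity, and effectivity is then just the statement that the representable $\Hom_{\cal F}(-,f^{\ast}(E'))$ is a sheaf for the canonical topology. But two of your steps do not hold up as stated. First, the reduction ``by extensivity'' from an arbitrary covering family $\{F_i\to F\}$ to the single epimorphism $\coprod_i F_i\to F$ is not formal for this particular prestack: $t_f$ does not send coproducts to products of categories (an object of $(\coprod_i F_i\downarrow f^{\ast})$ involves a single $E$, whereas a family of objects of the $(F_i\downarrow f^{\ast})$ involves unrelated $E_i$'s), so descent for the coproduct coverings $\{F_i\to\coprod_i F_i\}$ is itself a nontrivial gluing statement --- indeed it is exactly where the real content lies, namely that the descent isomorphisms over the (possibly empty) overlaps $F_i\times_F F_j$ amount to a cocycle of isomorphisms $E_i\cong E_j$ in ${\cal E}$ identifying all the $E_i$ with one object. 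Extensivity of ${\cal F}$ says nothing about this.

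Second, and more seriously, the step you yourself flag as the crux --- effectivity --- is resolved by appeal to the wrong theorem. Giraud's result that $r_f$ (equivalently $s_f$) is a stack concerns descent along covers in ${\cal E}$ for the fibers ${\cal F}/f^{\ast}(E)$; the present proposition is about descent along covers in ${\cal F}$, and there is no evident way to ``transport the computation along $f$'': a canonical cover of $F$ in ${\cal F}$ does not lie over a cover in ${\cal E}$ in any useful sense. Nor is universality of colimits (van Kampen) actually needed. The paper's proof treats an arbitrary epimorphic family directly: the descent isomorphisms $f_{ij}$ identify with isomorphisms $E_i\cong E_j$ in ${\cal E}$ satisfying the cocycle identities (precisely because pullback in $t_f$ fixes the ${\cal E}$-component), so all the $E_i$ are canonically isomorphic to a single $E$, and the structure maps $\alpha_i$ then amalgamate to a unique $F\to f^{\ast}(E)$ simply because $\Hom_{\cal F}(-,f^{\ast}(E))$ is a sheaf for the canonical topology; uniqueness of the amalgamation comes from the same sheaf property. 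If you replace your two reductions by this direct treatment of a general family --- which your single-epimorphism computation already contains in embryo --- your argument becomes correct and essentially coincides with the paper's.
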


\begin{proof}
Let $\{f_{i}:F_{i} \to F \mid i\in I\}$ be an epimorphic family in $\cal F$ and 
	\[
	\{ A_i=(F_{i}, E_{i}, \alpha_i:F_{i} \to f^{\ast}(E_{i})) \mid i\in I\}, \{f_{ij}: {\pi_{i}}^{\ast}(A_{i}) \xrightarrow{\raisebox{-0.7ex}[0ex][0ex]{$\sim$}} {\pi'_j}^{\ast}(A_{j}) \mid (i, j)\in I\times I\},
	\]
	where $\pi_{i}$ and $\pi'_{j}$ are defined, for each $(i, j)\in I\times I$, by the pullback square
\[\begin{tikzcd}
	{F_{i}\times_{F}F_{j}} & {F_i} \\
	{F_{j}} & F,
	\arrow["{\pi_{i}}", from=1-1, to=1-2]
	\arrow["{\pi'_{j}}"', from=1-1, to=2-1]
	\arrow["{f_j}"', from=2-1, to=2-2]
	\arrow["{f_{i}}", from=1-2, to=2-2]
\end{tikzcd}\]
be a collection of descent data indexed by it. For any $(i, j)\in I$, 
\[
\pi_{i}^{\ast}(A_{i})=(F_{i}\times_{F}F_{j}, E_{i}, \alpha_i\circ \pi_{i}:F_{i}\times_{F}F_{j} \to f^{\ast}(E_{i})),
\]
\[
{\pi'_{j}}^{\ast}(A_{j})=(F_{i}\times_{F}F_{j}, E_{j}, \alpha_j\circ \pi'_{j}:F_{i}\times_{F}F_{j} \to f^{\ast}(E_{j})).
\]
So the isomorphism $f_{ij}$ actually identifies with an isomorphism 
\[
f_{ij}:E_{i}\xrightarrow{\raisebox{-0.7ex}[0ex][0ex]{$\sim$}} E_{j}
\]
in $\cal E$ such that the following triangle commutes:
\[\begin{tikzcd}
	{F_{i}\times_{F}F_{j}} && {f^{\ast}(E_i)} \\
	&& {f^{\ast}(E_j)}
	\arrow["{f^{\ast}(f_{ij})}", from=1-3, to=2-3]
	\arrow["{\alpha_j\circ \pi'_{j}}"', from=1-1, to=2-3]
	\arrow["{\alpha_i \circ \pi_i}", from=1-1, to=1-3]
\end{tikzcd}\]

Note that the fact that $t_{f}$ is split easily implies that
\begin{itemize}
    \item for any $i\in I$, $f_{ii}=1_{A_i}$,
    
    \item for any $i, j, k\in I$, $f_{jk}\circ f_{ij}=f_{ik}$. 
\end{itemize}

Let $E$ be the colimit of the diagram in $\cal E$ having the $E_{i}$'s as vertices and the $f_{ij}$'s as edges between them (with the above relations); the above identities actually imply that, for each $i\in I$, $E\cong E_{i}$. The commutativity of the above triangles ensures that we have a cocone from the $F_{i}$'s to $f^{\ast}(E)$ whose legs are given by the composites of the arrows $\alpha_i$ with the image under $f^{\ast}$ of the corresponding canonical colimit arrow $E_{i}\to E$. Since the representable $\textup{Hom}_{\cal F}(-, f^{\ast}(E))$ is a sheaf for the canonical topology on $\cal F$, it follows that there is a unique arrow $F\to f^{\ast}(E)$ which restricts on the $F_i$'s to the legs of this cocone, and which therefore provides the required \ac amalgamation' for our descent data. The uniqueness of the amalgamation (up to isomorphism) also follows at once from the sheaf property.       
\end{proof}

\begin{corollary}
	Let $M$ be a model of a geometric theory $\mathbb T$ in a Grothendieck topos $\cal E$. Then the functor 
	\[
	t_{f_{M}}:{\cal E}^{\textup{op}} \to \textup{Cat}
	\] 
	associated with the geometric morphism $f_{M}:{\cal E}\to \Sh({\cal C}_{\mathbb T}, J_{\mathbb T})$ corresponding to $M$ via the universal property of the classifying topos for $\mathbb T$ is a stack for the canonical topology on $\cal E$.
	
	In particular, if $({\cal C}, J)$ is a site of definition for $\mathcal E$ then the functor
	\[  \begin{array}{rcl}
	\mathcal{C}^{\textup{op}} & \stackrel{\mathbb{M}}{\longrightarrow} & \textup{Cat} \\
	c  & \longmapsto & (c \downarrow F_{M}), \\
	c_1 \stackrel{u}{\rightarrow} c_2 & \longmapsto & (c_2 \downarrow F_{M}) \stackrel{u^*}{\rightarrow} (c_1 \downarrow F_{M})
	\end{array} \]
	where $u^* : (c_2 \downarrow F_{M}) {\rightarrow} (c_1 \downarrow F_{M})$ is the pre-composition functor sending any generalized element $ a : {c_2} \rightarrow \llbracket \vec{x}^{\vec{A}}.  \phi \rrbracket_M $ to $a\circ u : {c_1} \rightarrow \llbracket \vec{x}^{\vec{A}}.  \phi \rrbracket_M $
	is a stack for the topology $J$, where $F_{M}$ is the functor ${\cal C}_{\mathbb T}\to {\cal E}$ taking the interpretations of formulae in the model $M$.
\end{corollary}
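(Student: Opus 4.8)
The first assertion is an immediate consequence of the preceding Proposition. Recall that the geometric morphism $f_{M}$ is characterised by the fact that $M$ is, up to isomorphism, the inverse image under $f_{M}^{\ast}$ of the universal model of $\mathbb T$ inside $\Sh({\cal C}_{\mathbb T}, J_{\mathbb T})$; equivalently, $F_{M}=f_{M}^{\ast}\circ \y_{\mathbb T}$, where $\y_{\mathbb T}:{\cal C}_{\mathbb T}\to \Sh({\cal C}_{\mathbb T}, J_{\mathbb T})$ denotes the canonical functor (which is full and faithful, since $J_{\mathbb T}$ is subcanonical). In particular, for an object $E$ of $\cal E$ the comma category $(E\downarrow f_{M}^{\ast})$ is precisely the category of generalised elements at $E$ of the interpretations of geometric formulae in $M$, and the Proposition, applied to $f=f_{M}:{\cal E}\to \Sh({\cal C}_{\mathbb T}, J_{\mathbb T})$, states exactly that $t_{f_{M}}:E\mapsto (E\downarrow f_{M}^{\ast})$ is a stack for the canonical topology on $\cal E$.

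For the second assertion, the plan is to deduce the stack property of $\mathbb M$ from that of $t_{f_{M}}$ by restriction along the canonical functor $C:{\cal C}\to \Sh({\cal C}, J)\simeq {\cal E}$ of the site $({\cal C}, J)$. Since $F_{M}=f_{M}^{\ast}\circ \y_{\mathbb T}$ with $\y_{\mathbb T}$ full and faithful, $\mathbb M$ is identified with the composite $t_{f_{M}}\circ C^{\textup{op}}$ cut down, fibrewise, to the full subcategories spanned by the objects whose first component is a representable sheaf on ${\cal C}_{\mathbb T}$. Now fix an object $c$ of $\cal C$ and a $J$-covering sieve $R$ on it. Because $C$ takes $J$-covering sieves to epimorphic families, the sieve $\overline R$ generated in $\cal E$ by $\{C(d)\to C(c)\mid (d\to c)\in R\}$ is covering for the canonical topology, so $t_{f_{M}}$ satisfies descent along it by the first part, and it remains to transfer this to descent for $\mathbb M$ along $R$. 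A descent datum for $\mathbb M$ relative to $R$ consists of objects $A_{d}=(\{\vec{x}_{d}^{\vec{A}_{d}}.\phi_{d}\},\ \gamma_{d}:C(d)\to \llbracket \vec{x}_{d}^{\vec{A}_{d}}.\phi_{d}\rrbracket_{M})\in \mathbb M(d)$, one for each arrow $d\to c$ in $R$, together with coherent isomorphisms over restrictions; exactly as in the proof of the Proposition, the coherently isomorphic system of first components has as colimit a single sheaf $E$ isomorphic to each $\y_{\mathbb T}(\{\vec{x}_{d}^{\vec{A}_{d}}.\phi_{d}\})$ — in particular \emph{again representable} — and the $\gamma_{d}$ assemble into a cocone over $\overline R$ with vertex $f_{M}^{\ast}(E)$. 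As $\textup{Hom}_{\cal E}(-,f_{M}^{\ast}(E))$ is a sheaf for the canonical topology on $\cal E$, this cocone is amalgamated by a unique arrow $C(c)\to f_{M}^{\ast}(E)$, which is the sought object of $(c\downarrow F_{M})$; the same sheaf property, together with the full faithfulness of $\y_{\mathbb T}$, shows that morphisms of amalgamations are detected on their first components, giving uniqueness up to unique isomorphism. Hence $\mathbb M(c)\to \textup{Desc}(R,\mathbb M)$ is an equivalence and $\mathbb M$ is a $J$-stack. One may also phrase this more succinctly as an instance of the stack-theoretic Comparison Lemma, to the effect that the restriction along a dense morphism of sites of a stack for the codomain topology is a stack for the domain topology.

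The step I expect to require the most care is the identification of the descent category $\textup{Desc}(R,\mathbb M)$ over the $J$-covering sieve $R$ in $\cal C$ — where the fibre products $c_{i}\times_{c}c_{j}$ demanded by the naive, family-based formulation of descent need not exist — with the descent category of $t_{f_{M}}$ over the induced epimorphic family $\{C(c_{i})\to C(c)\}$ in $\cal E$; this is exactly the point at which the hypothesis that $({\cal C}, J)$ be a \emph{site of definition} for $\cal E$, rather than an arbitrary subcategory, is used, and it is made harmless by working throughout with the sieve-theoretic formulation of descent. The only other point needing attention, namely that the amalgamating first component remains representable, is settled by the coherence of the isomorphisms in a descent datum, as indicated above.
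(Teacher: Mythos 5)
Your first paragraph is fine and is exactly the argument the paper intends (the paper marks the corollary as immediate): the first assertion is the Proposition applied to $f_{M}$, and the identification $F_{M}\cong f_{M}^{\ast}\circ \y_{\mathbb T}$, with $\y_{\mathbb T}$ full and faithful, exhibits $\mathbb M$ as the full subfibration of the restriction of $t_{f_{M}}$ to $\cal C$ on the objects with representable first component. The genuine gap is in your second paragraph, at precisely the step you yourself flag and then defer: neither the appeal to a ``stack-theoretic Comparison Lemma'' (which in any case would apply to the full restriction of $t_{f_{M}}$, not to the subfibration $\mathbb M$) nor the assertion that ``the coherently isomorphic system of first components has as colimit a single sheaf $E$ isomorphic to each $\y_{\mathbb T}(\{\vec{x}_{d}.\phi_{d}\})$ --- in particular again representable'' is justified at the stated level of generality. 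A descent datum for $\mathbb M$ over a $J$-covering sieve $R$ only supplies isomorphisms between first components along morphisms of (the category of elements of) $R$; if $R$ is disconnected, the first components on different branches need not be isomorphic in ${\cal C}_{\mathbb T}$ at all, the diagram is not connected, its colimit is not representable, and no amalgamation exists --- indeed such a datum does not even extend to a descent datum for $t_{f_{M}}$ over the sieve generated in $\cal E$, where compatibility over the pullbacks $C(c_{i})\times_{C(c)}C(c_{j})$ (possibly $0$) would force those isomorphisms; so your claimed identification of the two descent categories fails. Concretely: take ${\cal E}=\Set$, ${\cal C}$ the full subcategory on the two-element set $\mathbf{2}$ with the induced topology (a legitimate site of definition), $R$ the covering sieve generated by the two constant endomaps of $\mathbf{2}$, ${\mathbb T}=\mathbb{O}$ and $M$ any inhabited set; assigning $(\{x.\top\},\textup{const})$ to one constant map and $(\{x,y.\top\},\textup{const})$ to the other, with identity coherences, is a descent datum for $\mathbb M$ over $R$ admitting no amalgamation, since $\{x.\top\}\not\cong\{x,y.\top\}$ in ${\cal C}_{\mathbb O}$.

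What closes the gap --- and what the paper implicitly relies on, since it only ever invokes this corollary for a cartesian subcanonical site (Section 3) --- is closure of $\cal C$ under pullbacks (or some hypothesis guaranteeing the relevant connectivity): then for any two members $c_{i}\to c$, $c_{j}\to c$ of $R$ the pullback $c_{i}\times_{c}c_{j}$ lies in $\cal C$ and belongs to $R$, and the descent isomorphism over it is in particular an isomorphism $\{\vec{x}_{i}.\phi_{i}\}\cong\{\vec{x}_{j}.\phi_{j}\}$ in ${\cal C}_{\mathbb T}$ (the condition involving the generalized elements may be vacuous when this pullback has empty image, but the underlying ${\cal C}_{\mathbb T}$-arrow is always part of the datum). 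These are the site-level substitutes for the isomorphisms $f_{ij}$ of the Proposition's proof, which exist there because the pullbacks $F_{i}\times_{F}F_{j}$ are available in the topos; with them all first components are coherently isomorphic, any one of them serves as the (representable) first component of the amalgamation, and the sheaf property of $\Hom_{\cal E}(-,\llbracket\vec{x}.\phi\rrbracket_{M})$ glues the generalized elements exactly as you describe --- the same device is also needed to glue morphisms (fullness), which likewise fails over disconnected sieves. So either add the cartesianness/pullback hypothesis explicitly (matching how the corollary is used in the paper) or carry out the transfer-of-descent step in detail; as written, that step is the entire content of the second assertion and is missing.
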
\qed
 
The category $(1_{\cal F}\downarrow f^{\ast})={\cal G}(r_{f})={\cal G}(t_{f})$ has as objects the triplets $(F, E, \alpha:F\to f^{\ast}(E))$ (where $E\in {\cal E}$, $F\in {\cal F}$ and $\alpha$ is an arrow in $\cal F$) and as arrows $(F, E, \alpha:F\to f^{\ast}(E)) \to (F', E', \alpha':F'\to f^{\ast}(E'))$ the pairs of arrows $(v:F\to F', u:E\to E')$ in $\cal F$ and $\cal E$ such that $f^{\ast}(u)\circ \alpha=\alpha'\circ v$; the functors $\pi_{r_{f}}$ and $\pi_{t_{f}}$ are respectively the canonical projection functors from this category to $\cal E$ and $\cal F$. Since via these functors the category ${\cal G}(r_{f})={\cal G}(t_{f})$ is fibered both over $\cal E$ and over $\cal F$, it is natural to consider the smallest Grothendieck topology on it which makes $\pi_{r_{f}}$ and $\pi_{t_{f}}$ comorphisms of sites when $\cal E$ and $\cal F$ are endowed with their canonical topologies, in other words the join of the Giraud topologies on it induced by the canonical topologies on $\cal E$ and $\cal F$. As we shall see, this topology will play a crucial role in connection with our construction of the over-topos. To this end, we more generally describe, for any category $\cal C$ and basis $\cal B$ for a Grothendieck topology on $\cal C$ such that $({\cal C}, J_{\cal B})$ is a site of definition for $\cal E$, a basis for the Grothendieck topology on ${\cal G}(r_{f}')$, where $r_{f}'$ is the \ac restriction' of $r_{f}$ to $\cal C$ (that is, the composite of $r_{f}$ with the opposite of the canonical functor ${\cal C}\to {\cal E}$), which is generated by the Giraud topology induced by the canonical topology on $\cal F$ and by the Giraud topology induced by $J_{\cal B}$. This result will be applied subsequently to the syntactic site $({\cal C}_{\mathbb T}, J_{\mathbb T})$ of definition of a geometric theory $\mathbb T$ inside its classifying topos.

\begin{definition}\label{defliftedtopology}
Given a Grothendieck topology $J$ on $\cal C$ (resp. a basis $\cal B$ for a Grothendieck topology on $\cal C$) such that $({\cal C}, J)$ (resp. $({\cal C}, J_{\cal B})$) is a site of definition for the topos $\cal E$, we shall call the Grothendieck topology on ${\cal G}(r_{f}')$ generated by the Giraud topology induced by the canonical topology on $\cal F$ and by the Giraud topology induced by $J$ (resp. by $J_{\cal B}$) the \emph{$(f, J)$-lifted topology} (resp. the \emph{$(f, {\cal B})$-lifted topology}) and shall denote it by $L_{(f, J)}$ (resp. $L_{(f, {\cal B})}$). 
\end{definition}

\begin{theorem}\label{thmliftedtopology}
Let $f:{\cal F}\to {\cal E}$ be a geometric morphism and $\cal B$ be a basis for a Grothendieck topology on $\cal C$ such that $({\cal C}, J_{\cal B})$ is a site of definition for $\cal E$. Then, with the above notation, the $(f, {\cal B})$-lifted Grothendieck topology on ${\cal G}(r_{f}')$ has as a basis the collection of multicomposites of a family of cartesian lifts (with respect to $r_{f}'$) of arrows in a family of $\cal B$ with $J^{\textup{Gir}}_{\cal F}$-covering families, that is, the collection of families  
\[ \begin{tikzcd}[column sep=large]
     ((d_{ij}, (c_i, b_{ij}))  \arrow[]{rr}{ (u_{ij}, \, (\xi_i, \widetilde{b_{ij}}))} && (F, (c, a)))_{i \in I, j \in J_i}
    \end{tikzcd}  \]
    where $(\xi_{i}: c_i \rightarrow  c)_{i \in I}$ is a family in ${\cal B}(c)$ and the families \[ (\widetilde{b_{ij}} : d_{ij} \rightarrow f^{\ast}(c_i)\times_{f^{\ast}(c)} F)_{j \in J_i}\] are epimorphic in $\cal E$ for each $i\in I$:
    
    \[ 
\begin{tikzcd}
d_{ij} \arrow[rrd, "u_{ij}", bend left=20] \arrow[rdd, "b_{ij}"', bend right=20] \arrow[rd, dashed, "\widetilde{b_{ij}}"] &                                                                                             &                                      \\
                                                                                              & f^{\ast}(c_i)\times_{f^{\ast}(c)} F \arrow[r, "\pi_{i}"] \arrow[d, "a_i"] \arrow[rd, "\lrcorner", phantom, near start] & F \arrow[d, "a"]                     \\
                                                                                              & f^{\ast}(c_i) \arrow[r, "f^{\ast}(\xi_i)"]    & f^{\ast}(c)
\end{tikzcd} \]
    
\end{theorem}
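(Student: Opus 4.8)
The plan is to check that the proposed collection of families, which I will call $\cal K$, satisfies the basis axioms (a)--(c) recalled earlier, and that the Grothendieck topology it generates is exactly $L_{(f,{\cal B})}$. The second point is the soft half. By construction every family in $\cal K$ is a multicomposite of a family of cartesian $r_{f}'$-lifts of a family of $\cal B$---a basic covering family for the Giraud topology on ${\cal G}(r_{f}')$ induced by $J_{\cal B}$---with, over each of these lifts, a cartesian lift (for the canonical projection $q\colon{\cal G}(r_{f}')\to{\cal F}$) of an epimorphic family in $\cal F$, which is a basic covering family for $J^{\textup{Gir}}_{\cal F}$. Since any Grothendieck topology is closed under multicomposition of its covers, every family in $\cal K$ is $L_{(f,{\cal B})}$-covering. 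Conversely, a basic cover of the Giraud topology induced by $J_{\cal B}$ is the family of $\cal K$ obtained by choosing all the epimorphic families to be singletons on identities, and a basic $J^{\textup{Gir}}_{\cal F}$-cover is the family of $\cal K$ obtained by choosing the family of $\cal B$ to be $\{1_{c}\}$ (legitimate by axiom (a) for $\cal B$); hence the topology generated by $\cal K$ contains both Giraud topologies, and so their join. Together with the previous inclusion this shows, \emph{granting that $\cal K$ is a basis}, that it is a basis for $L_{(f,{\cal B})}$.

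It remains to verify (a)--(c). Axiom (a) is trivial. For (b), consider a family of $\cal K$ over $(F,(c,a))$ given by a family $(\xi_{i}\colon c_{i}\to c)_{i\in I}$ in ${\cal B}(c)$ and epimorphic families $(\widetilde{b_{ij}}\colon d_{ij}\to f^{\ast}(c_{i})\times_{f^{\ast}(c)}F)_{j}$, together with an arrow $(F',(c',a'))\to(F,(c,a))$ whose $\cal C$-component is $\xi\colon c'\to c$ and whose $\cal F$-component is $v\colon F'\to F$. I would apply axiom (b) for $\cal B$ to $(\xi_{i})_{i}$ and $\xi$ to obtain a family $(\eta_{m}\colon c'_{m}\to c')_{m}$ in ${\cal B}(c')$ with each $\xi\eta_{m}$ factoring through some $\xi_{i(m)}$, and then pull back the epimorphic family indexed by $i(m)$ along the canonical comparison map $f^{\ast}(c'_{m})\times_{f^{\ast}(c')}F'\to f^{\ast}(c_{i(m)})\times_{f^{\ast}(c)}F$ (which exists since $f^{\ast}$ preserves pullbacks). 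The resulting families are epimorphic, because epimorphic families in $\cal F$ are stable under pullback, and the family of $\cal K$ over $(F',(c',a'))$ they define factors, member by member, through the given one.

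Axiom (c), closure under multicomposition, is the heart of the matter. The key device is a commutation lemma: the composite of a cartesian $q$-lift of an arrow $v$ of $\cal F$ with a cartesian $r_{f}'$-lift of an arrow $\xi$ of $\cal C$ coincides with the composite of the cartesian $r_{f}'$-lift of $\xi$ with the cartesian $q$-lift of the pullback of $v$ along the relevant projection---a direct computation with the pasting lemma for pullbacks and the left-exactness of $f^{\ast}$. A multicomposite of two families of $\cal K$ has the layered shape ``$r_{f}'$-lift cover, then $q$-lift cover, then $r_{f}'$-lift cover, then $q$-lift cover''; applying the commutation lemma to the two middle layers transposes them, after which the two consecutive $r_{f}'$-lift layers are merged using closure of $\cal B$ under multicomposition and the two consecutive $q$-lift layers using closure of epimorphic families in $\cal F$ under composition and pullback. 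The obstacle I anticipate is that, after the first $q$-lift layer, the distinct pieces carry \emph{distinct} $\cal B$-covers, so the transposition is not available uniformly; my proposed way around this is to use that $\cal F$ is a topos to replace each epimorphic family by the single epimorphism out of the coproduct of its domains---which generates the same $J^{\textup{Gir}}_{\cal F}$-covering sieve---so that over each $\xi_{i}$ there is a unique intermediate object and the remaining combinatorics reduces exactly to multicomposition in $\cal B$. Carrying this reorganization out while keeping track of the comparison maps between the iterated pullbacks of $f^{\ast}$-objects is the routine-but-lengthy part; once it is complete, (a)--(c) hold and, with the identification of the generated topology above, the theorem is proved.
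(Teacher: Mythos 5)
Your ``soft half'' (identifying the topology generated by your collection $\mathcal K$ with the join of the two Giraud topologies) and your verifications of axioms (a) and (b) are fine and agree with the paper, which disposes of (b) just as tersely (pullback-stability of epimorphic families and of $\mathcal B$-covers, compatibly with multicomposition). The genuine problem is your treatment of axiom (c), which you rightly call the heart of the matter but do not actually prove. First, the device you propose for the obstacle you anticipate --- replacing each epimorphic family by the single epimorphism out of the coproduct of its domains --- is inadmissible here: axiom (c) requires the multicomposite presieve \emph{itself} to belong to the collection, so replacing a family by a different one that merely generates the same covering sieve changes the statement to be proved. Second, even granting the replacement, the difficulty does not disappear: over a fixed $\xi_i$ the second-layer data still consists of possibly different $\mathcal B$-families $(\xi^{ij}_k)_k$ on $c_i$ for different $j\in J_i$, and coalescing the domains $d_{ij}$ into $\coprod_j d_{ij}$ produces no single $\mathcal B$-family on $c_i$ to compose with; bases are not closed under common refinement (only the topologies they generate are), so ``the remaining combinatorics reduces exactly to multicomposition in $\mathcal B$'' is precisely the missing step, and the same dependence on $j$ blocks the merging of the two $r$-layers after your transposition. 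Note also that the point is not mere bookkeeping: for fixed $(i,j,k)$ the composite arrows $d^{ij}_{kl}\to f^{\ast}(c^{ij}_k)\times_{f^{\ast}(c)}F$ factor through a pullback of the \emph{single} arrow $\widetilde{b_{ij}}$, not of the whole epimorphic family $\langle\widetilde{b_{ij}}\rangle_{j\in J_i}$, so the joint epimorphicity demanded by membership in $\mathcal K$ is not automatic and is exactly what has to be argued.

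The paper's proof of closure under multicomposition proceeds quite differently and involves no transposition of layers at all. It takes as the $\mathcal B$-component of the multicomposite the composed family $(\xi_i\circ\xi^{ij}_k)_{i\in I,\,j\in J_i,\,k\in K^{ij}}$ and then establishes the required epimorphicity conditions by one global computation in the topos $\mathcal F$: it assembles the arrows $\widetilde{b^{ij}_{kl}}$ into a single map of coproducts, argues that the comparison map $\coprod_{i,j,k,l}d^{ij}_{kl}\to\coprod_{i,j,k}\bigl(f^{\ast}(c^{ij}_k)\times_{f^{\ast}(c)}F\bigr)$ is an epimorphism (as a composite of an epimorphism with a pullback of the epimorphism $\langle\widetilde{b_{ij}}\rangle_{i,j}$), and finally uses disjointness of coproducts --- a coproduct of arrows in a topos is epic if and only if each summand is --- to extract the componentwise statement. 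Your proposal contains no analogue of this computation; your commutation lemma is correct but does not substitute for it, and as sketched your strategy for (c) would not go through.
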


\begin{proof}
Note that the collection of arrows
\[ \begin{tikzcd}[column sep=large]
     \big{(}(d_{ij}, (c_i, b_{ij}))  \arrow[]{rr}{ (u_{ij},\, (\xi_i, \widetilde{b_{ij}}))} && (F, (c, a))\big{)}_{i \in I, j \in J_i}
    \end{tikzcd}  \]
is the multicomposite of the family 
\[
\big{(}(\pi_{i},\, (\xi_i, 1)):(f^{\ast}(c_i) \times_{f^{\ast}(c)} F, (c_i, a_i)) \to (F, (c, a))\big{)}_{i\in I},
\]
each of whose arrows is cartesian with respect to the fibration $r_{f}$ to $\cal E$, 
with the families 
\[
\big{(} (\widetilde{b_{ij}},\, (1_{c_{i}}, \widetilde{b_{ij}})) :(d_{ij}, (c_i, b_{ij})) \to (f^{\ast}(c_i) \times_{f^{\ast}(c)} F, (c_i, a_i))  \big{)}_{j\in J_{i}}
\]
(for $i\in I$), each of whose arrows is cartesian with respect to the fibration $t_{f}$ to $\cal F$ (and vertical with respect to the fibration $r_{f}$).

The first condition in the definition of basis is clearly satisfied, since $\cal B$ is a basis. The second condition follows from the stability under pullback of epimorphic families in $\cal F$ as well as of families in $\cal B$, in light of the compatibility of multicomposition with respect to pullback. It remains to show that the collection of families specified in the theorem is closed with respect to multicomposition.

Let 
\[ \begin{tikzcd}[column sep=large]
     \big{(}(d_{ij}, (c_i, b_{ij}))  \arrow[]{rr}{ (u_{ij},\, (\xi_i, \widetilde{b_{ij}}))} && (F, (c, a))\big{)}_{i \in I,\, j \in J_i}
    \end{tikzcd} 
\]
and, for each $i \in I, j \in J_i$,
\[ \begin{tikzcd}[column sep=large]
     \big{(}(d^{ij}_{kl}, (c^{ij}_k, b^{ij}_{kl}))  \arrow[]{rr}{ (u^{ij}_{kl},\, (\xi^{ij}_k, \widetilde{b^{ij}_{kl}}))} && ((d_{ij}, (c_i, b_{ij}))\big{)}_{k \in K^{ij},\, l \in L^{ij}_k}
    \end{tikzcd} 
\]

    \[ 
\begin{tikzcd}
d^{ij}_{kl} \arrow[rrd, "u^{ij}_{kl}", bend left=20] \arrow[rdd, "b^{ij}_{kl}"', bend right=20] \arrow[rd, dashed, "\widetilde{b^{ij}_{kl}}"] &                                                                                             &                                      \\
                                                                                              & f^{\ast}(c^{ij}_{k})\times_{f^{\ast}(c_i)} d_{ij} \arrow[r] \arrow[d] \arrow[rd, "\lrcorner", phantom, near start] & d_{ij} \arrow[d, "b_{ij}"]                     \\
                                                                                              & f^{\ast}(c^{ij}_{k}) \arrow[r, "f^{\ast}(\xi^{ij}_{k})"]    & f^{\ast}(c_i)
\end{tikzcd} \]
be families satisfying the conditions in the theorem.

We want to prove that their multicomposite also satisfies these conditions.

As ${\cal B}$ is a basis for a Grothendieck topology, the family 
\[  \begin{tikzcd}
\big{(} c^{ij}_k  \arrow[rr, "\xi_i\circ \xi^{ij}_k"] & & c \big{)}_{i \in I, j \in J_i, k \in K^{ij}}
\end{tikzcd}   \]
is in $\cal B$.

Consider the following pullback diagrams:

\[\begin{tikzcd}
	{f^{\ast}(c^{ij}_{k})\times_{f^{\ast}(c)} F} && {f^{\ast}(c_i)\times_{f^{\ast}(c)} F} && F \\
	{f^{\ast}(c^{ij}_{k})} && {f^{\ast}(c_i)} && {f^{\ast}(c)}
	\arrow["{f^{\ast}(\xi^{ij}_{k})}", from=2-1, to=2-3]
	\arrow["{f^{\ast}(\xi_i)}", from=2-3, to=2-5]
	\arrow["a", from=1-5, to=2-5]
	\arrow[from=1-3, to=1-5]
	\arrow[from=1-3, to=2-3]
	\arrow[from=1-1, to=1-3]
	\arrow[from=1-1, to=2-1]
	\arrow["\lrcorner"{anchor=center, pos=0.125}, draw=none, from=1-1, to=2-3]
	\arrow["\lrcorner"{anchor=center, pos=0.125}, draw=none, from=1-3, to=2-5]
	\arrow["{f^{\ast}(\xi_{i}\circ \xi^{ij}_{k})}"', bend right=15, from=2-1, to=2-5]
\end{tikzcd}\]
For each $i\in I$ and $j\in J_{i}$, in the diagram
\[ 
\begin{tikzcd}[column sep=small]
\underset{k \in K^{ij} \atop l \in L^{ij}_k}{\coprod} d^{ij}_{kl} \arrow[rr, two heads] \arrow[rrrdd, "\langle b^{ij}_{kl} \rangle_{k \in K^{ij} \atop l \in L^{ij}_k}"', bend right=20] \arrow[rrrr, "\langle u^{ij}_{kl} \rangle_{k \in K^{ij} \atop l \in L^{ij}_k}", bend left=20] &  & f^{\ast}(c^{ij}_{k}) \times_{f^{\ast}(c)} d_{ij} \arrow[rd] \arrow[rr] \arrow[rdd, bend right=20] \arrow[rrd, "\lrcorner", phantom, very near start] &                                                                                                                & d_{ij} \arrow[rd] \arrow[rdd, bend right=20] &                                                       \\
                                               &  &                                                                                                                                                                     & f^{\ast}(c^{ij}_{k})\times_{f^{\ast}(c)} F  \arrow[rr, crossing over] \arrow[d] \arrow[rrd, "\lrcorner", phantom, very near start] & {}                                                  & f^{\ast}(c_i)\times_{f^{\ast}(c)} F \arrow[d]    \\
                                           &  &                                                                                   & f^{\ast}(c^{ij}_{k}) \arrow[rr, "f^{\ast}(\xi^{ij}_{k})"']                              &                                                     & f^{\ast}(c_i)
\end{tikzcd} \]
both the front lower and back squares are pullbacks, hence so is the top square. Then in the diagram 
\[ 
\begin{tikzcd}[column sep=tiny]
\underset{i \in I \atop j \in J_i}{\coprod}\underset{k \in K^{ij} \atop l \in L^{ij}_k}{\coprod} d^{ij}_{kl} \arrow[rr, two heads] \arrow[rrrdd, "\langle\langle b^{ij}_{kl} \rangle_{k \in K^{ij} \atop l \in L^{ij}_k}\rangle_{i \in I \atop j \in J_i}"', bend right=20] \arrow[rrrr, "\langle\langle u^{ij}_{kl} \rangle_{k \in K^{ij} \atop l \in L^{ij}_k}\rangle_{i \in I \atop j \in J_i}", bend left=20] &  & \underset{i \in I \atop j \in J_i}{\coprod} \underset{k \in K^{ij}}{\coprod}(f^{\ast}(c^{ij}_{k}) \times_{f^{\ast}(c)}d_{ij})
 \arrow[rd] \arrow[rr] \arrow[rdd, bend right=20] \arrow[rrd, "\lrcorner", phantom, very near start] &                                                                                                                & \underset{i \in I \atop j \in J_i}{\coprod} d_{ij} \arrow[rd, two heads, "\langle \widetilde{b_{ij}}\rangle_{i \in I \atop j \in J_i}"] \arrow[rdd, bend right=20] &                                                       \\                                               &  &                                                                                                                                                                                                  & \underset{i \in I \atop j \in J_i}{\coprod} \underset{k \in K^{ij}}{\coprod} (f^{\ast}(c^{ij}_{k}) \times_{f^{\ast}(c)} F)
                                          \arrow[rr, crossing over] \arrow[d] \arrow[rrd, "\lrcorner", phantom, very near start] & {}                                                                                           & \underset{i \in I}{\coprod}(f^{\ast}(c_i)\times_{f^{\ast}(c)} F)   \arrow[d]    \\
                                           &  &                                                                                              & \underset{i \in I \atop j \in J_i}{\coprod} \underset{k \in K^{ij}}{\coprod} f^{\ast}(c^{ij}_{k}) \arrow[rr, " \underset{i \in I}{\coprod} \langle f^{\ast}(\xi^{ij}_{k}) \rangle_ {j \in J_i \atop k \in K^{ij}}"']                              &                                                                                              & \underset{i \in I }{\coprod}f^{\ast}(c_i)
\end{tikzcd} \]
the upper square is a pullback, whence the left-hand arrow in it is an epimorphism (as it is the pullback of the epimorphism $\langle \widetilde{b_{ij}}\rangle_{i \in I \atop j \in J_i}$). Therefore the arrow $$\langle\langle \widetilde{b^{ij}_{kl}} \rangle_{k \in K^{ij} \atop l \in L^{ij}_k}\rangle_{i \in I \atop j \in J_i}:\underset{i \in I \atop j \in J_i}{\coprod}\underset{k \in K^{ij} \atop l \in L^{ij}_k}{\coprod} d^{ij}_{kl} \to \underset{i \in I \atop j \in J_i}{\coprod} \underset{k \in K^{ij}}{\coprod} (f^{\ast}(c^{ij}_{k}) \times_{f^{\ast}(c)} F)$$ is also an epimorphism, as it is the composite of two epimorphisms. But this is precisely the arrow corresponding to our multicomposite family, whence we can conclude that the latter satisfies the conditions of the theorem, as required (as coproducts are disjoint in a topos, a coproduct of arrows is an epimorphism if and only if each of the arrows are). 
\end{proof}

We now apply the theorem to the geometric morphism to the classifying topos of a geometric theory $\mathbb T$, represented as the topos of sheaves on its geometric syntactic site, induced by a model of $\mathbb T$:

\begin{corollary}\label{corantecedenttopology}
   Let $M$ be a model of a geometric theory $\mathbb T$ in a Grothendieck topos $\cal E$ and $f_{M}:{\cal E}\to \Sh({\cal C}_{\mathbb T}, J_{\mathbb T})$ the geometric morphism corresponding to it via the universal property of the classifying topos for $\mathbb T$. 
   Then we have: 
   
   \begin{enumerate}
       \item[(i)] The $(f_{M}, J_{\mathbb T})$-lifted topology $L_{(f_{M}, J_{\mathbb T})}$ on $(1_{\cal E} \downarrow F_{M})$ has as a basis the collection of families
    \[ \begin{tikzcd}[column sep=large]
     \big{(}(d_{ij}\, (\{ \vec{x}_i^{\vec{A}_i}. \phi_i \} \, , \llbracket \theta_i \rrbracket_{M}(a))) \arrow[]{rr}{ (u_{ij} , \, ([\theta_i ]_\mathbb{T}, \widetilde{b_{ij}}))} && (e, (\{ \vec{x}^{\vec{A}}. \phi \} \, , a)\big{)}_{i \in I, j \in J_i}
    \end{tikzcd}  \]
    where $e\in {\cal E}$, $([\theta_i ]_\mathbb{T} : \{ \vec{x}_i^{\vec{A}_i}. \phi_i \} \rightarrow  \{ \vec{x}^{\vec{A}}. \phi \})_{i \in I}$ is a family in ${\cal B}_{\mathbb T}(\{ \vec{x}^{\vec{A}}. \phi \})$ and the families \[ \big{(}\widetilde{b_{ij}} : d_{ij} \rightarrow \llbracket \theta_i \rrbracket_M ^{-1}(a)\big{)}_{j \in J_i}\] are epimorphic in $\cal E$ for each $i\in I$:
    \[ 
\begin{tikzcd}
d_{ij} \arrow[rrd, "u_{ij}", bend left=20] \arrow[rdd, "b_{ij}"', bend right=20] \arrow[rd, dashed, "\widetilde{b_{ij}}"] &                                                                                             &                                      \\
                                                                                              & \llbracket \theta_i \rrbracket_M^{-1}(a) \arrow[r, "\pi_{i}"] \arrow[d, "\llbracket \theta_i \rrbracket_{M}(a)"] \arrow[rd, "\lrcorner", phantom, very near start] & e \arrow[d, "a"]                     \\
                                                                                              & \llbracket \vec{x}_i^{\vec{A}_i}. \phi_i \rrbracket_M \arrow[r, "\llbracket \theta_i \rrbracket_M"]    & \llbracket \vec{x}^{\vec{A}}. \phi \rrbracket_M
\end{tikzcd} \]
    
    \item[(ii)] For any separating set ${\cal C}$ for $\cal E$, the canonical functor ${\cal C}\to {\cal E}$ induces a $L_{(f_{M}, J_{\mathbb T})}$-dense functor $(i_{\cal C} \downarrow F_{M}) \to (1_{\cal E} \downarrow F_{M})$, where $i_{\cal C}$ is the canonical embedding of $\cal C$ in $\cal E$, and the Grothendieck topology induced by $L_{(f_{M}, J_{\mathbb T})}$ on the category $(i_{\cal C} \downarrow F_{M})$ has as a basis the collection of families
    \[ \begin{tikzcd}[column sep=large]
     \big{(}(d_{ij},\, (\{ \vec{x}_i^{\vec{A}_i}. \phi_i \} \, , \llbracket \theta_i \rrbracket_M(a))) \arrow[]{rr}{ (u_{ij} , \, ([\theta_i ]_\mathbb{T}, \widetilde{b_{ij}}))} && (c, (\{ \vec{x}^{\vec{A}}. \phi \} \, , a)\big{)}_{i \in I, j \in J_i}
    \end{tikzcd}  \]
    where $c, d_{ij}\in {\cal C}$ for each $i$ and $j$, $([\theta_i ]_\mathbb{T} : \{ \vec{x}_i^{\vec{A}_i}. \phi_i \} \rightarrow  \{ \vec{x}^{\vec{A}}. \phi \})_{i \in I}$ is a family in ${\cal B}_{\mathbb T}(\{ \vec{x}^{\vec{A}}. \phi \})$ and the families \[ \big{(}\widetilde{b_{ij}} : d_{ij} \rightarrow \llbracket \theta_i \rrbracket_M^{-1}(a)\big{)}_{j \in J_i}\] are epimorphic in $\cal E$ for each $i\in I$:
    \[ 
\begin{tikzcd}
d_{ij} \arrow[rrd, "u_{ij}", bend left=20] \arrow[rdd, "b_{ij}"', bend right=20] \arrow[rd, dashed, "\widetilde{b_{ij}}"] &                                                                                             &                                      \\
                                                                                              & \llbracket \theta_i \rrbracket_M^{-1}(a) \arrow[r, "\pi_{i}"] \arrow[d, "\llbracket \theta_i \rrbracket_M(a)"] \arrow[rd, "\lrcorner", phantom, very near start] & c \arrow[d, "a"]                     \\
                                                                                              & \llbracket \vec{x}_i^{\vec{A}_i}. \phi_i \rrbracket_M \arrow[r, "\llbracket \theta_i \rrbracket_M"]    & \llbracket \vec{x}^{\vec{A}}. \phi \rrbracket_M
\end{tikzcd} \]

    \item[(iii)] If $\cal E$ is the topos $\Set$ of sets and $1:\{\ast\}\to {\cal E}$ is the functor from the one-object and one-arrow category $\{\ast\}$ to $\cal E$ picking the terminal object of $\Set$, the embedding $(\int M)=(1_{\{\ast\}} \downarrow F_{M}) \to (1_{\cal E}\downarrow F_{M})$ induced by the functor $1$ is $L_{(f_{M}, J_{\mathbb T})}$-dense and the topology induced by $L_{(f_{M}, J_{\mathbb T})}$ on $(\int M)$ is the antecedent topology of Definition \ref{antecedents topology}. 

   \end{enumerate}
\end{corollary}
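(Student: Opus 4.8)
The plan is to obtain the three items by successive specialization of Theorem~\ref{thmliftedtopology}, the only real work being the model-theoretic dictionary and the descent to a separating subcategory.

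For (i), I would apply Theorem~\ref{thmliftedtopology} with $f:=f_{M}:{\cal E}\to \Sh({\cal C}_{\mathbb T}, J_{\mathbb T})$, with $\cal C$ taken to be ${\cal C}_{\mathbb T}$ embedded via the Yoneda embedding $\y$, and with $\cal B$ the basis ${\cal B}_{\mathbb T}$ for $J_{\mathbb T}$ recalled before Definition~\ref{antecedents topology}. Since $f_{M}$ is the geometric morphism attached to $M$, one has $f_{M}^{\ast}\circ \y = F_{M}$; in particular $f_{M}^{\ast}(\y(\{\vec{x}^{\vec{A}}.\phi\}))=\llbracket \vec{x}^{\vec{A}}.\phi\rrbracket_{M}$ and $f_{M}^{\ast}([\theta])=\llbracket\theta\rrbracket_{M}$, so that ${\cal G}(r_{f_{M}}')$ is literally $(1_{\cal E}\downarrow F_{M})$, and, using that $f_{M}^{\ast}$ preserves pullbacks, the cartesian lift $f^{\ast}(c_{i})\times_{f^{\ast}(c)}F$ in the statement of the theorem becomes the fibre $\llbracket\theta_{i}\rrbracket_{M}^{-1}(a)$ of the generalized element $a$ along $\llbracket\theta_{i}\rrbracket_{M}$, while a family in $\cal B$ is a family in ${\cal B}_{\mathbb T}$. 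This produces the basis stated in (i) verbatim.

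For (ii), I would first observe that the functor $(i_{\cal C}\downarrow F_{M})\to (1_{\cal E}\downarrow F_{M})$ is the inclusion of a full subcategory: an arrow of $(1_{\cal E}\downarrow F_{M})$ between objects whose first component lies in $\cal C$ is a pair consisting of an arrow of $\cal C$ and an arrow of ${\cal C}_{\mathbb T}$, hence already an arrow of $(i_{\cal C}\downarrow F_{M})$. So, by the Comparison Lemma (in the form of Proposition~5.3 of \cite{OCdenseness}), it suffices to verify the density condition: every object $(e, (\{\vec{x}^{\vec{A}}.\phi\}, a))$ of $(1_{\cal E}\downarrow F_{M})$ admits an $L_{(f_{M}, J_{\mathbb T})}$-covering sieve generated by arrows out of objects of $(i_{\cal C}\downarrow F_{M})$; for this one uses the family of cartesian lifts, with respect to the projection to $\cal E$ (the fibration associated with $t_{f_{M}}$), of the family $\{v:c\to e\mid c\in {\cal C}\}$, which is epimorphic because $\cal C$ is separating, hence is $J^{\textup{Gir}}_{\cal E}$-covering and a fortiori $L_{(f_{M}, J_{\mathbb T})}$-covering, and whose members $(c, (\{\vec{x}^{\vec{A}}.\phi\}, a\circ v))$ have first component in $\cal C$. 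The Comparison Lemma then identifies the induced topology on $(i_{\cal C}\downarrow F_{M})$ as the one whose covering sieves are those generating $L_{(f_{M}, J_{\mathbb T})}$-covering sieves upstairs; the families displayed in (ii) are $L_{(f_{M}, J_{\mathbb T})}$-covering by (i), hence covering for the induced topology, and conversely, given a basis family of (i) on an object of $(i_{\cal C}\downarrow F_{M})$, one covers each of its ${\cal E}$-domains $d_{ij}$ by a family $\{d'_{ijk}\to d_{ij}\}$ with $d'_{ijk}\in {\cal C}$ and multicomposes: since a composite of jointly epimorphic families is jointly epimorphic, the composed arrows $d'_{ijk}\to \llbracket\theta_{i}\rrbracket_{M}^{-1}(a)$ again form, for each $i$, an epimorphic family, so the resulting family is of the form displayed in (ii) and refines the given sieve. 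That this collection of ${\cal C}$-families forms a basis is checked, exactly as in the proof of Theorem~\ref{thmliftedtopology}, from the stability under pullback in $\cal E$ of epimorphic families and of families in ${\cal B}_{\mathbb T}$ together with the compatibility of multicomposition with pullback.

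For (iii), I would apply (ii) with ${\cal E}=\Set$ and $\cal C$ the full subcategory on the terminal object $1$: this is a separating set since $1$ separates parallel maps of sets, and it is the one-object, one-arrow category $\{\ast\}$ with $i_{\cal C}=1:\{\ast\}\to \Set$. Unwinding objects and arrows, $(1_{\{\ast\}}\downarrow F_{M})$ is exactly $\int M$: an object is a global element $\vec{a}:1\to \llbracket\vec{x}^{\vec{A}}.\phi\rrbracket_{M}$, i.e.\ a pair $(\vec{a}, \{\vec{x}^{\vec{A}}.\phi\})$, and an arrow corresponds to an arrow $[\theta]$ of ${\cal C}_{\mathbb T}$ with $\llbracket\theta\rrbracket_{M}(\vec{a_{1}})=\vec{a_{2}}$. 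Specializing the basis of (ii): its families become families of arrows $[\theta_{i}]:(\vec{b_{ij}}, \{\vec{x}_{i}^{\vec{A}_{i}}.\phi_{i}\})\to (\vec{a}, \{\vec{x}^{\vec{A}}.\phi\})$ with $([\theta_{i}])_{i\in I}$ in ${\cal B}_{\mathbb T}(\{\vec{x}^{\vec{A}}.\phi\})$ and, for each $i$, $\{\vec{b_{ij}}\mid j\in J_{i}\}$ a jointly epimorphic — equivalently, in $\Set$, jointly surjective — family of global elements of $\llbracket\theta_{i}\rrbracket_{M}^{-1}(a)$, that is, of $\theta_{i}$-antecedents of $\vec{a}$. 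Taking, for each $i$, $J_{i}$ to be the whole set $\llbracket\theta_{i}\rrbracket_{M}^{-1}(a)$ of antecedents recovers precisely the basis ${\cal B}_{\mathbb T}^{\textup{ant}}$ of Definition~\ref{antecedents topology}; and since a jointly surjective family of antecedents generates the same sieve on $\int M$ as the family of all antecedents, the two bases generate the same Grothendieck topology. Hence the topology induced by $L_{(f_{M}, J_{\mathbb T})}$ on $\int M$ is $J^{\textup{ant}}_{M}$, as claimed.

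The delicate point is (ii): handling the descent to the separating subcategory carefully — checking the density condition against the precise statement of the Comparison Lemma, and confirming that the displayed collection of ${\cal C}$-families genuinely satisfies the basis axioms despite the fact that $(i_{\cal C}\downarrow F_{M})$ typically lacks the relevant pullbacks. Parts (i) and (iii) are, respectively, a direct reading of Theorem~\ref{thmliftedtopology} and a routine unwinding of the case ${\cal E}=\Set$.
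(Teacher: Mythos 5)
Your proposal is correct and follows essentially the same route as the paper, whose proof simply records that (i) is the specialization of Theorem \ref{thmliftedtopology} to the geometric morphism $f_{M}$ and the syntactic site $({\cal C}_{\mathbb T}, J_{\mathbb T})$, that (ii) follows from the definition of the topology induced on a full dense subcategory, and that (iii) is the case ${\cal C}=\{\ast\}$ of (ii). The extra details you supply (the density check via cartesian lifts of epimorphic families from the separating set, the refinement of basis families by covers drawn from $\cal C$, and the observation that jointly surjective families of antecedents generate the same sieves as the full antecedent families) are precisely the verifications the paper leaves implicit.
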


\begin{proof}
    (i) This is the particular case of Theorem \ref{thmliftedtopology} by taking $\cal E$ to be the classifying topos of $\mathbb T$ and $({\cal C}, J)$ the geometric syntactic site of $\mathbb T$.
    
    (ii) This easily follows from the definition of induced topology on a full dense subcategory.
    
    (iii) This is the particular case of (ii) when $\cal E$ is $\Set$ and ${\cal C}$ is the category $\{\ast\}$, regarded as a separating set for $\cal E$ through the embedding $1:\{\ast\}\to {\cal E}$.
\end{proof}

We now fix a Grothendieck topos $ \mathcal{E}$ and a small subcanonical cartesian site $(\mathcal{C},J)$ of definition for $\mathcal{E}$.

\begin{definition}
A \emph{cartesian} stack is a stack $ \mathbb{M} $ on $(\mathcal{C},J)$ such that any $ \mathbb{M}(c)$ is a cartesian category, and such that any transition functor $ \mathbb{M}(u)$, for $u$ a morphism in $\mathcal{C}$, is cartesian. A morphism of cartesian stacks is a morphism of stacks $ \alpha : \mathbb{M}_1 \rightarrow \mathbb{M}_2$ such that in any $ c$, $\alpha_c :\mathbb{M}_1(c) \rightarrow \mathbb{M}_2(c) $ is cartesian. 

We shall denote by $\St_{\textup{cart}}({\cal C}, J)$ the category of stacks on a site $({\cal C}, J)$ and morphisms of cartesian stacks between them.
\end{definition}

The following lemma, whose proof is straightforward, expresses cartesian lifts in cartesian fibrations as pullback squares, and  will serve for describing one half of the correspondence provided by Giraud's construction of the classifying topos of a stack recalled below:

\begin{lemma}\label{liftterminal}
Let $ \mathbb{M} $ be a cartesian stack, with $ 1_\mathbb{M}(c)$ the terminal object of the fiber of ${\mathbb M}$ at $c$. Then, in the Grothendieck fibration $\pi_\mathbb{M} : \int \mathbb{M} \rightarrow \mathcal{C}$, for any arrow $ u:  c_1 \rightarrow c_2$ and any object $ a$ in $ \mathbb{M}(c_2)$, the following square is a pullback:
\[
\begin{tikzcd}
{(c_1, \mathbb{M}(u)(a))} \arrow[d, "{(1_{c_1}, !_{\mathbb{M}(u)(a)})}"'] \arrow[r, "{(u, 1_{\mathbb{M}(u)(a)})}"] \arrow[rd, "\lrcorner", very near start, phantom] & {(c_2, a)} \arrow[d, "{(1_{c_2}, !_{a})}"] \\
{(c_1, 1_{\mathbb{M}(c_1)})} \arrow[r, "{(u, 1_{1_{\mathbb{M}(c_1)}})}"]                                                                            & {(c_2, 1_{\mathbb{M}(c_2)})}              
\end{tikzcd} \]
\end{lemma}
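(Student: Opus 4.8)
The plan is to verify the universal property of the pullback directly from the description of $\int\mathbb{M}$ as the Grothendieck construction of the (contravariant) pseudofunctor $\mathbb{M}$. Recall that an object of $\int\mathbb{M}$ is a pair $(c,a)$ with $c\in\mathcal{C}$ and $a\in\mathbb{M}(c)$, that a morphism $(c_1,a_1)\to(c_2,a_2)$ is a pair $(v,\phi)$ with $v\colon c_1\to c_2$ in $\mathcal{C}$ and $\phi\colon a_1\to\mathbb{M}(v)(a_2)$ in $\mathbb{M}(c_1)$, and that composition is given by $(w,\psi)\circ(v,\phi)=(w\circ v,\ \mathbb{M}(v)(\psi)\circ\phi)$ (modulo the coherence isomorphisms of $\mathbb{M}$, which may be taken to be identities in all the cases of interest, where $\mathbb{M}$ is split). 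In the displayed square the horizontal arrows are the cartesian lifts $(u,1_{\mathbb{M}(u)(a)})$ of $u$, while the vertical arrows are the canonical maps to the terminal objects of the respective fibres.

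First I would record that the square commutes: since each transition functor $\mathbb{M}(v)$ is cartesian it preserves terminal objects, so both composites $(c_1,\mathbb{M}(u)(a))\to(c_2,1_{\mathbb{M}(c_2)})$ have first component $u$ and, by terminality, second component the unique arrow $\mathbb{M}(u)(a)\to 1_{\mathbb{M}(c_1)}$.

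Next I would test a competing cone: an object $(d,x)$ equipped with morphisms $(p,\alpha)\colon(d,x)\to(c_2,a)$ and $(q,\beta)\colon(d,x)\to(c_1,1_{\mathbb{M}(c_1)})$ whose composites with the two legs of the square agree. Reading off first components of this agreement gives $p=u\circ q$, while its second components carry no information, being both the unique arrow from $x$ into the terminal object $1_{\mathbb{M}(d)}$. A mediating morphism $(r,\gamma)\colon(d,x)\to(c_1,\mathbb{M}(u)(a))$ is then forced to have $r=q$ (comparing first components of the two triangle conditions) and, once $\mathbb{M}(q)(\mathbb{M}(u)(a))$ is identified with $\mathbb{M}(u\circ q)(a)=\mathbb{M}(p)(a)$ via the coherence isomorphism of $\mathbb{M}$, second component $\gamma=\alpha$. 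Conversely, $(q,\alpha)$, with $\alpha$ read through that identification, does make both triangles commute: the one over $(u,1_{\mathbb{M}(u)(a)})$ reproduces $(p,\alpha)$ by the composition rule, and the one over $(1_{c_1},!_{\mathbb{M}(u)(a)})$ holds automatically because its second component factors through the terminal object $\mathbb{M}(q)(1_{\mathbb{M}(c_1)})\cong 1_{\mathbb{M}(d)}$. This establishes the universal property, hence the pullback.

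I do not anticipate any genuine obstacle: the only points demanding (light) care are the bookkeeping of the coherence isomorphisms of $\mathbb{M}$ and the fact, used twice, that cartesian functors preserve terminal objects; both become vacuous when $\mathbb{M}$ is split. An alternative, more conceptual route is to observe that the section $s\colon\mathcal{C}\to\int\mathbb{M}$, $c\mapsto(c,1_{\mathbb{M}(c)})$, is right adjoint to $\pi_{\mathbb{M}}$ (with $\pi_{\mathbb{M}}\circ s=1_{\mathcal{C}}$ and an identity counit), that the two vertical arrows of the square are the components of the unit of this adjunction, and that the square is exactly the naturality square of this unit at the morphism $(u,1_{\mathbb{M}(u)(a)})$; since the latter is a cartesian arrow for the fibration $\pi_{\mathbb{M}}$, this naturality square is a pullback, a general fact about fibrations whose fibres have reindexing-stable terminal objects whose proof reduces, through the two triangle identities of the adjunction, to the defining universal property of cartesian arrows.
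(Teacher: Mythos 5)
Your proof is correct. The paper itself gives no argument here (the lemma is stated with its proof declared ``straightforward''), and your direct verification of the universal property in the Grothendieck construction --- commutativity via preservation of terminal objects by the transition functors, then existence and uniqueness of the mediating arrow by comparing first components and using terminality for the second --- is exactly the routine check the authors intend; the closing observation that the square is the unit naturality square of the adjunction $\pi_{\mathbb{M}}\dashv s$ at a cartesian arrow is a valid and pleasantly conceptual alternative reading of the same fact.
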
\qed

\begin{proposition}\label{propcartesianstructure}
For any cartesian stack on a site $ \mathbb{M}$ on $(\mathcal{C},J)$ whose underlying category $\cal C$ is cartesian, $ \int \mathbb{M}$ is a cartesian category. 
\end{proposition}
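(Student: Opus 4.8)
The plan is to exhibit the finite limits of $\int\mathbb{M}$ explicitly, using the fact that $\pi_{\mathbb{M}}:\int\mathbb{M}\to\mathcal{C}$ is a Grothendieck fibration whose base $\mathcal{C}$ and all of whose fibers are cartesian, together with the standard recipe for building limits in the total space of such a fibration. Concretely, given a finite diagram $D:\mathcal{D}\to\int\mathbb{M}$ with components $(c_d, a_d)$ and transition arrows $(u_\delta, \alpha_\delta)$, I would first form the limit $c=\lim_{d}c_d$ in $\mathcal{C}$ (which exists since $\mathcal{C}$ is cartesian), with projections $p_d:c\to c_d$. Reindexing each $a_d$ along $p_d$ gives objects $\mathbb{M}(p_d)(a_d)$ in the single fiber $\mathbb{M}(c)$; the transition data $\alpha_\delta$, transported into $\mathbb{M}(c)$ via the pseudofunctoriality of $\mathbb{M}$ and the universal property of $c$, assemble into a diagram $\overline{D}:\mathcal{D}\to\mathbb{M}(c)$ of the same shape. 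Since $\mathbb{M}(c)$ is cartesian, this diagram has a limit $a=\lim_{d}\mathbb{M}(p_d)(a_d)$ in $\mathbb{M}(c)$, and I claim $(c,a)$ with the evident projections $(p_d,\,\text{proj}_d):(c,a)\to(c_d,a_d)$ is the limit of $D$ in $\int\mathbb{M}$.

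To verify the universal property, take any cone $\big((e,x)\xrightarrow{(v_d,\xi_d)}(c_d,a_d)\big)_{d}$ over $D$ in $\int\mathbb{M}$. The arrows $v_d:e\to c_d$ form a cone over the underlying diagram in $\mathcal{C}$, hence factor uniquely through a map $v:e\to c$ with $p_d\circ v=v_d$. Applying $\mathbb{M}(v)$ to the limit projections of $a$ and using the compatibility isomorphisms $\mathbb{M}(v)\circ\mathbb{M}(p_d)\cong\mathbb{M}(v_d)$ (and that $\mathbb{M}(v)$ preserves the finite limit defining $a$, since transition functors are cartesian), one sees that the $\xi_d$, regarded after reindexing along $v$ as a cone in $\mathbb{M}(e)$ over the appropriate diagram, factor uniquely through a map $\xi$ into $\mathbb{M}(v)(a)$; this is exactly the vertical component of the unique factorization $(v,\xi):(e,x)\to(c,a)$. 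Uniqueness follows by combining the uniqueness of $v$ in $\mathcal{C}$ with the uniqueness of $\xi$ in the fiber $\mathbb{M}(e)$. In fact it suffices to treat the terminal object and binary products and equalizers (or binary products and pullbacks) separately: the terminal object is $(1_{\mathcal{C}}, 1_{\mathbb{M}(1_{\mathcal{C}})})$, and for binary products $(c_1,a_1)\times(c_2,a_2)=(c_1\times c_2,\ \mathbb{M}(\pi_1)(a_1)\times\mathbb{M}(\pi_2)(a_2))$, etc.; the general finite limit then follows formally.

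The main obstacle is purely bookkeeping: keeping track of the canonical coherence isomorphisms of the pseudofunctor $\mathbb{M}$ when transporting the transition data $\alpha_\delta$ into the common fiber $\mathbb{M}(c)$ and when checking that the reindexing functors $\mathbb{M}(v)$ respect the constructed limits. There is no conceptual difficulty — everything is forced by the cartesianness hypotheses on $\mathcal{C}$, on each $\mathbb{M}(c)$, and on the transition functors — but one must be careful that the diagram $\overline{D}$ in $\mathbb{M}(c)$ is well-defined up to coherent isomorphism and that Lemma \ref{liftterminal} (which identifies cartesian lifts with pullback squares along the terminal maps) is used correctly to reconcile the fibered description with the explicit pullback formulas. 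Once the coherence is handled, the universal property verification is routine, so I would state the limit formulas, invoke the standard fibrational limit-lifting argument, and relegate the coherence checks to a remark.
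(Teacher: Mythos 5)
Your proposal is correct and follows essentially the same route as the paper's proof: take the limit of the underlying diagram in $\mathcal{C}$, reindex the fiber data along the projections into the single fiber over that limit, take the limit there using cartesianness of the fiber, and verify the universal property by factoring the base component through the limit in $\mathcal{C}$ and using that the reindexing functors are cartesian to factor the vertical component in the fiber. Your extra care with transporting the transition arrows and the pseudofunctor coherence is a more explicit treatment of a point the paper leaves implicit, but it is the same argument.
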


\begin{proof}
We must prove that $ \int \mathbb{M}$ is cartesian. As we shall see, the considered property holds globally on $\int \mathbb{M}$ out of holding in a specific fiber. Let $ (c_i, a_i)_{i \in I}$ a finite diagram in $ \int \mathbb{M}$; then, as $ \mathcal{C}$ is cartesian, we can compute the limit $p_{i}:\lim_{i \in I} c_i \to c_i$ in $ \mathcal{C}$. Moreover, $ \mathbb{M}(\lim_{i \in I} c_i)$ is cartesian, so the finite limit $ \lim_{i \in I} \mathbb{M}(p_i)(a_i)$ also exists in $ \mathbb{M}(\lim_{i \in I} c_i)$, providing a cone \[((p_i, \pi_i) : (\underset{i \in I}{\lim}   \; c_i,
\underset{i \in I}{\lim}  \; \mathbb{M}(p_i)(a_i)) \rightarrow (c_i, a_i))_{i \in I}  \] in $ \int \mathbb{M}$, where $ \pi_i : \lim_{i \in I} \mathbb{M}(p_i)(a_i) \rightarrow \mathbb{M}(p_i)(a_i)$ is the projection in the fiber. Now for any other cone $ ((v_i, q_i) : (c,a) \rightarrow (c_i, a_i))_{i \in I}$, with $ q_i : a \rightarrow \mathbb{M}(v_i)(a_i)$, there exists a unique arrow $ w :c \rightarrow \lim_{i \in I} c_i$ by the universal property of the limit in $ \mathcal{C}$; but, as the transition functors $ \mathbb{M}(p_i)$ are cartesian, we have 
\[ \mathbb{M}(w)(\underset{i \in I}{\lim}  \;\mathbb{M}(p_i)(a_i)) \simeq \underset{i \in I}{\lim} \; \mathbb{M}(w) \mathbb{M}(p_i)(a_i) \simeq \underset{i \in I}{\lim} \; \mathbb{M}(v_i)(a_i)  \]
inducing a unique arrow $ f : a \rightarrow  \mathbb{M}(w)({\lim}_{i \in I} \mathbb{M}(p_i)(a_i)) $, so that there is a unique factorization in $ \int \mathbb{M}$
\[ 
\begin{tikzcd}
{(c,a)} \arrow[rr, "{(w,f)}", dashed] \arrow[rd, "{(v_i, q_i)}"'] &              & {(\underset{i \in I}{\lim} \; c_i, \underset{i \in I}{\lim} \; \mathbb{M}(p_i)(a_i))} \arrow[ld, "{(p_i, \pi_i)}"] \\
                                                                  & {(c_i, a_i)} &                             
\end{tikzcd} \]
as desired.
\end{proof}

Let us recall from \cite{giraud} the following fundamental classification result:

\begin{theorem}\label{thmGiraud}
Let $\mathbb M$ be a cartesian stack on a cartesian site $({\cal C}, J)$ of definition for a topos $\cal E$, and $J^{\textup{Gir}}_\mathbb{M}$ be the Giraud topology induced by $J$. Then the sheaf topos $\mathcal{E}[\mathbb{M}]=\Sh(\int \mathbb{M}, J^{\textup{Gir}}_\mathbb{M})$, with its canonical morphism  $p_{\mathbb M}:\mathcal{E}[\mathbb{M}] \to {\cal E}$ induced by the comorphism of sites $\pi_{{\mathbb M}}:(\textstyle{\int} \mathbb{M}, J^{\textup{Gir}}_\mathbb{M}) \to ({\cal C}, J)$, is the classifier of $\mathbb{M}$, in the sense that the following universal property holds: for any geometric morphism $g:{\cal G}\to {\cal E}$,
\[ \textup{\bf Geom}_\mathcal{E}[g, p_{\mathbb M}] \simeq \St_{\textup{cart}}(\mathcal{C}, J)[\mathbb{M}, \mathcal{G}/g^*]. \]
\end{theorem}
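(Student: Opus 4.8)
The plan is to deduce the universal property from Diaconescu's theorem, applied both to the base topos $\mathcal{E}=\Sh(\mathcal{C},J)$ and to $\mathcal{E}[\mathbb{M}]=\Sh(\int\mathbb{M},J^{\textup{Gir}}_{\mathbb M})$, by factoring the desired equivalence as
\[ \textup{\bf Geom}_\mathcal{E}[g,p_{\mathbb M}]\;\simeq\;\big\{(F,\theta)\big\}\;\simeq\;\St_{\textup{cart}}(\mathcal{C},J)[\mathbb{M},\mathcal{G}/g^*], \]
where $(F,\theta)$ ranges over pairs consisting of a cartesian $J^{\textup{Gir}}_{\mathbb M}$-continuous functor $F\colon\int\mathbb{M}\to\mathcal{G}$ and an isomorphism $\theta\colon F\comp 1_{\mathbb M}\xrightarrow{\sim}g^*\comp i_\mathcal{C}$, with morphisms the natural transformations $F\to F'$ compatible with $\theta,\theta'$. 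Here $i_\mathcal{C}\colon\mathcal{C}\to\mathcal{E}$ is the canonical embedding and $1_{\mathbb M}\colon\mathcal{C}\to\int\mathbb{M}$, $c\mapsto(c,1_{\mathbb{M}(c)})$, is the \emph{terminal section}, which is a well-defined functor and a section of $\pi_{\mathbb M}$ since the transition functors of $\mathbb M$, being cartesian, preserve terminal objects. Recall also that, $\int\mathbb{M}$ being cartesian by Proposition \ref{propcartesianstructure}, a flat functor out of it is the same thing as a cartesian one.

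\emph{Step one.} Since $p_{\mathbb M}$ is the geometric morphism induced by the comorphism of sites $\pi_{\mathbb M}$, its inverse image acts on representables by $i_\mathcal{C}(c)\mapsto\big[(c',a')\mapsto\Hom_\mathcal{C}(\pi_{\mathbb M}(c',a'),c)=\Hom_\mathcal{C}(c',c)\big]$; as the transition functors preserve terminal objects, this presheaf on $\int\mathbb{M}$ is isomorphic to $\Hom_{\int\mathbb{M}}(-,(c,1_{\mathbb{M}(c)}))$, and it is already a $J^{\textup{Gir}}_{\mathbb M}$-sheaf, so that $p_{\mathbb M}^*\comp i_\mathcal{C}$ is isomorphic to the composite of $1_{\mathbb M}$ with the canonical functor $\int\mathbb{M}\to\mathcal{E}[\mathbb{M}]$. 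By Diaconescu's theorem for $\mathcal{E}[\mathbb{M}]$, a geometric morphism $h\colon\mathcal{G}\to\mathcal{E}[\mathbb{M}]$ corresponds to a cartesian $J^{\textup{Gir}}_{\mathbb M}$-continuous functor $F$; the identification above then shows that $p_{\mathbb M}\comp h$ corresponds, under Diaconescu's theorem for $\mathcal{E}$, to $F\comp 1_{\mathbb M}$, while $g$ corresponds to $g^*\comp i_\mathcal{C}$. Hence an isomorphism $p_{\mathbb M}\comp h\cong g$ is precisely the datum of an isomorphism $\theta$ as above, and checking that this correspondence is $2$-natural yields the first equivalence.

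\emph{Step two.} Given $(F,\theta)$, define for each $c$ a functor $\alpha_c\colon\mathbb{M}(c)\to\mathcal{G}/g^*(c)$ sending $a$ to the composite $F(c,a)\xrightarrow{F(!_a)}F(c,1_{\mathbb{M}(c)})\xrightarrow{\theta_c}g^*(c)$ (where $!_a$ is the unique fibre map $a\to 1_{\mathbb{M}(c)}$) and a fibre morphism to the image under $F$ of the corresponding vertical arrow. Each $\alpha_c$ is cartesian: it carries $1_{\mathbb{M}(c)}$ to $\theta_c$, which is terminal in $\mathcal{G}/g^*(c)$, and it carries any finite connected limit in $\mathbb{M}(c)$ — e.g. a binary product, viewed as a pullback over $1_{\mathbb{M}(c)}$ — to the corresponding limit in $\mathcal{G}/g^*(c)$, because such a limit is computed in $\int\mathbb{M}$ over the constant diagram at $c$ in $\mathcal{C}$ (as in Proposition \ref{propcartesianstructure}), whose limit is $c$, hence fibrewise, and is preserved by the cartesian functor $F$. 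Pseudonaturality in $c$ is where Lemma \ref{liftterminal} enters: for $u\colon c_1\to c_2$ and $a\in\mathbb{M}(c_2)$, the cartesian lift $(c_1,\mathbb{M}(u)(a))\to(c_2,a)$ fits into the pullback square of Lemma \ref{liftterminal}, so applying $F$ and transporting along $\theta$ gives $F(c_1,\mathbb{M}(u)(a))\cong g^*(c_1)\times_{g^*(c_2)}F(c_2,a)$ over $g^*(c_1)$, i.e. $\alpha_{c_1}\comp\mathbb{M}(u)\cong(g^*(u))^*\comp\alpha_{c_2}$, the transition constraint of the stack $\mathcal{G}/g^*$. Conversely, from a cartesian pseudonatural $\alpha\colon\mathbb{M}\Rightarrow\mathcal{G}/g^*$ one recovers $F$ by $F(c,a)=\dom(\alpha_c(a))$, with the action on morphisms assembled from the $\alpha_c$ and the pseudonaturality isomorphisms, and $\theta_c$ the identification $F(c,1_{\mathbb{M}(c)})=\dom(\alpha_c(1_{\mathbb{M}(c)}))\xrightarrow{\sim}g^*(c)$ coming from $\alpha_c$ preserving terminal objects; that $F$ is cartesian follows by computing finite limits in $\int\mathbb{M}$ as in Proposition \ref{propcartesianstructure} and using the cartesianness of the $\alpha_c$, the pseudonaturality isomorphisms and the left-exactness of $g^*$, and $J^{\textup{Gir}}_{\mathbb M}$-continuity follows since this topology is generated by cartesian lifts of $J$-covers, each of which — again by Lemma \ref{liftterminal} and the cartesianness of $F$ — is sent to the pullback along $F(c,a)\to g^*(c)$ of a family $(g^*(c_i)\to g^*(c))_i$, which is epimorphic because $g^*\comp i_\mathcal{C}$ is cover-preserving, epimorphic families being stable under pullback.

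Finally, one checks that these assignments are mutually quasi-inverse and extend to an equivalence of categories, sending a natural transformation $F\to F'$ compatible with $\theta,\theta'$ to a modification $\alpha\to\alpha'$ and back. I expect the real work — and the main obstacle — to lie not in these conceptual steps but in the coherence bookkeeping they require: verifying that the pseudonaturality isomorphisms produced from Lemma \ref{liftterminal} satisfy the cocycle and unit conditions (here it helps that $\mathbb{M}=t_{f_M}$ is \emph{split}, which trivialises the associativity and identity constraints), that $\theta$ is compatible with them, and that the equivalence of step one transports $2$-cells correctly. All of this is routine but lengthy diagram-chasing, of the kind illustrated in the proof of Theorem \ref{thmliftedtopology}; we refer to \cite{giraud} for the original argument.
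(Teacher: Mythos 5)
The paper does not actually prove this statement: it is recalled from \cite{giraud} and stated with no argument, so there is no internal proof to measure you against. Your sketch is nevertheless sound in outline, and its second step coincides with what the paper \emph{does} spell out immediately after the theorem, namely the construction of $\alpha_c(a)=\theta_c\circ F(!_a)$, its pseudonaturality via the pullback square of Lemma \ref{liftterminal}, and its fibrewise cartesianness via Proposition \ref{propcartesianstructure}; your first step (Diaconescu for both sites, plus the identification of $p_{\mathbb M}^*\circ i_{\cal C}$ with the terminal section $c\mapsto (c,1_{\mathbb{M}(c)})$ composed with $\int\mathbb{M}\to\mathcal{E}[\mathbb{M}]$) supplies precisely the glue the paper leaves implicit, and your treatment of fibrewise limits (terminal object plus connected limits, which are created both in $\int\mathbb{M}$ over the constant diagram and in the slice $\mathcal{G}/g^*(c)$) is in fact more careful than the paper's one-line remark. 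Two caveats. First, your claim that $\Hom_{\cal C}(\pi_{\mathbb M}(-),c)$ ``is already a $J^{\textup{Gir}}_{\mathbb M}$-sheaf'' is true but not free: it uses the subcanonicity of $J$ and the continuity of the projection $\pi_{\mathbb M}$ (the paper itself invokes Theorem 4.44 of \cite{OCdenseness} for exactly this point in the over-topos case), so it deserves a justification rather than an assertion. Second, the parenthetical appeal to $\mathbb{M}=t_{f_M}$ being split is out of place: the theorem concerns an arbitrary cartesian stack on $({\cal C},J)$, so splitness cannot be assumed in its proof; fortunately nothing in your construction actually needs it, only the coherence bookkeeping becomes slightly longer for a general pseudofunctor.
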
\qed

Given a triangle  \[ 
\begin{tikzcd}
\mathcal{G} \arrow[rd, "g"'] \arrow[rr, "f"] &             & {\mathcal{E}[\mathbb{M}]} \arrow[ld, "p_{\mathbb M}"] \\
                                             & \mathcal{E} &                                                     
\end{tikzcd} \]
of geometric morphisms, the cartesian morphism of stacks $\alpha:{\mathbb M}\to {\cal G}\slash g^{\ast}$ associated with $g$ as in the theorem can be described as follows.

We have $ g^* \cong f^*p_{\mathbb M}^*$. The restriction to $\cal C$ of the functor $p_{\mathbb M}^*$ is just the terminal section $1_{\mathbb{M}(-)}$, so we have a commutative (up to isomorphism) triangle 
\[ 
\begin{tikzcd}
\int \mathbb{M} \arrow[rr, "f^*"] &                                                                       & \mathcal{G} \\
                                         & \mathcal{C} \arrow[lu, "1_{\mathbb{M}(-)}"] \arrow[ru, "g^*"'] &            
\end{tikzcd} \]

Now, for any $ (c,a) $ in $ \int \mathbb{M}$, there is a unique arrow $ !_a : a \rightarrow 1_{\mathbb{M}(c)} $ in the fiber $ \mathbb{M}(c)$, which is sent by $f^{\ast}$ to an arrow $ f^*(1_c,!_a) : f^*(c,a) \rightarrow f^*(c,1_{\mathbb{M}(c)}) \simeq g^*(c)$. This yields, for each $c\in {\cal C}$, a  functor 
\[ \begin{array}{rcl}
    \alpha_{c}: \mathbb{M}(c) & \rightarrow & \mathcal{G}/g^*(c) \\
    a & \mapsto & f^*(1_c,!_a) : f^*(c,a) \rightarrow g^*(c)
\end{array} \]
The functors $ (\alpha_c : \mathbb{M}(c) \rightarrow \mathcal{G}/g^*(c))_{c \in \mathcal{C}}$ actually define a natural transformation, that is, all the squares of the following form commute up to isomorphism:
\[ 
\begin{tikzcd}
\mathbb{M}(c_2) \arrow[r, "\alpha_{c_2}"] \arrow[d, "\mathbb{M}(u)"'] & \mathcal{G}/g^*(c_2) \arrow[d, "(g^*(u))^*"] \\
\mathbb{M}(c_1) \arrow[r, "\alpha_{c_1}"]                             & \mathcal{G}/g^*(c_1)                           
\end{tikzcd} \]
Indeed, by Lemma \ref{liftterminal} the following square is a pullback in $ \int \mathbb{M}$:
\[ \begin{tikzcd}
{(c_1, \mathbb{M}(u)(a))} \arrow[d, "{(1_{c_1}, !_{\mathbb{M}(u)(a)})}"'] \arrow[r, "{(u, 1_{\mathbb{M}(u)(a)})}"] \arrow[rd, "\lrcorner", very near start, phantom] & {(c_2, a)} \arrow[d, "{(1_{c_2}, !_{a})}"] \\
{(c_1, 1_{\mathbb{M}(c_1)})} \arrow[r, "{(u, 1_{1_{\mathbb{M}(c_1)}})}"]                                                                            & {(c_2, 1_{\mathbb{M}(c_2)})}              
\end{tikzcd}  \]
It thus follows that, the functor $ f^*$ being cartesian, the following square is also a pullback:
\[ 
\begin{tikzcd}[column sep=huge]
{f^*(c_1,\mathbb{M}(u)(a)))} \arrow[d, "\alpha_{c_1}(\mathbb{M}(u)(a))"'] \arrow[r, "{f^*(u,1_{\mathbb{M}(u)(a)})}"] \arrow[rd, "\lrcorner", very near start, phantom] & {f^*(c_2,a)} \arrow[d, "\alpha_{c_2}(a)"] \\
g^*(c_1) \arrow[r, "g^*u"]                                                                                                                           & g^*(c_2)                                 
\end{tikzcd} \]
So $\alpha_{c_1}(\mathbb{M}(u)(a)) = (g^*u)^*\alpha_{c_2}(a)$, which is precisely the content of the naturality condition.\\

The cartesianness of $\alpha$ is actually inherited from that of $f^*$, as $ \alpha$ acts as a restriction of $f^*$ on each fiber in light of Proposition \ref{propcartesianstructure}.

\section{The $\mathbb{T}$-over-topos of a model in an arbitrary topos}

Now we turn to the construction of the over-topos $u_{M}:{\cal E}[M]\to {\cal E}$ associated with a $ \mathbb{T}$-model $M$ in an arbitrary topos $ \mathcal{E}$. Recall that the desired formula will be, for any $ \mathcal{E}$-topos $ g : \mathcal{G} \rightarrow \mathcal{E}$, 
\[ \textup{\bf Geom}_\mathcal{E} [g, u_{M}] \simeq \mathbb{T}[\mathcal{G}]/g^*(M). \]

We suppose that we are given a cartesian small subcanonical site of definition $(\mathcal{C},J)$ for $\cal E$. Note that the canonical embedding $i_{\cal C}$ of $\cal C$ into $\cal E$ identifies $\cal C$ with a separating set of objects for $\cal E$ and $J$ with the Grothendieck topology induced on it by the canonical topology on $\cal E$. \\

In the case where $ \mathcal{E}= \Set$, as $ \Set$ is generated by $1$ under coproducts, the global elements $  1 \rightarrow \llbracket \vec{x}^{\vec{A}}. \phi \rrbracket_M $ are sufficient to generate all the generalized elements $ X \simeq \coprod_X 1 \rightarrow \llbracket \vec{x}^{\vec{A}}. \phi \rrbracket_M$. In the general case, global elements must be replaced by generalized elements, possibly restricting to those whose domain is an object of $\cal C$, which we call \emph{basic generalized elements}. Indeed, we shall replace the category of global elements of $M$ by an indexed category of basic generalized elements of interpretations in $M$ of geometric formulas over the language of $\mathbb T$, that is, with the comma category $(i_{\cal C} \downarrow F_{M})$, where $ F_{M} : (\mathcal{C}_\mathbb{T}, J_\mathbb{T}) \rightarrow \mathcal{E}$ is the $ J_\mathbb{T}$-continuous cartesian functor sending a geometric formula-in-context $ \{ \vec{x}^{\vec{A}}.  \phi \} $ to its interpretation $ \llbracket \vec{x}^{\vec{A}}.  \phi \rrbracket_M$ in $M$. This defines a prestack 
\[  \begin{array}{rcl}
    \mathcal{C}^{\textup{op}} & \stackrel{\mathbb{M}}{\longrightarrow} & \textup{Cat} \\
    c  & \longmapsto & (c \downarrow F_{M}), \\
    c_1 \stackrel{u}{\rightarrow} c_2 & \longmapsto & (c_2 \downarrow F_{M}) \stackrel{u^*}{\rightarrow} (c_1 \downarrow F_{M})
\end{array} \]
where $u^* : (c_2 \downarrow F_{M}) {\rightarrow} (c_1 \downarrow F_{M})$ is the pre-composition functor sending some $ a : {c_2} \rightarrow \llbracket \vec{x}^{\vec{A}}.  \phi \rrbracket_M $ to $ a\circ u : {c_1} \rightarrow \llbracket \vec{x}^{\vec{A}}.  \phi \rrbracket_M  $.

\begin{proposition}
$\mathbb{M}$ is a cartesian stack on $({\cal C}, J)$. That is, for each $c $ in $ \mathcal{C}$, $(c \downarrow F_{M})$ is cartesian, and for any arrow $u : c' \rightarrow c$ in $\cal C$, the transition functor $(u \downarrow F_{M})$ is cartesian.
\end{proposition}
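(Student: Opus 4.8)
The stack property of $\mathbb M$ for the topology $J$ has already been established above --- it is exactly the content of the corollary (to the proposition that $t_{f}$ is a split stack for the canonical topology on $\cal F$) specialized to $f = f_{M}$ --- so the only thing left to verify is the \emph{cartesian} part of the statement: that each fibre $(c\downarrow F_{M})$ has finite limits and that each transition functor $u^{\ast}=(u\downarrow F_{M})$ preserves them. The plan is to transport the finite-limit structure of the geometric syntactic category ${\cal C}_{\mathbb T}$ along the cartesian functor $F_{M}$. The single input needed is that $F_{M}$ preserves finite limits: this forces the projection $(c\downarrow F_{M})\to {\cal C}_{\mathbb T}$ to create finite limits (so the fibres are cartesian), and the fact that $u^{\ast}$ acts as the identity on the ${\cal C}_{\mathbb T}$-component makes its compatibility with these limits automatic.

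For a fixed $c\in\mathcal C$ I would exhibit a terminal object and binary pullbacks in $(c\downarrow F_{M})$. The terminal object is $(\{[].\top\},\, c\to 1)$, where $1=\llbracket [].\top\rrbracket_{M}$ is the terminal object of $\cal E$ and the structure arrow is the unique one; uniqueness of arrows into it follows from terminality of $\{[].\top\}$ in ${\cal C}_{\mathbb T}$. For the pullback of $[\theta_{1}]:(\{\vec x_{1}^{\vec A_{1}}.\phi_{1}\},a_{1})\to(\{\vec x^{\vec A}.\phi\},a)$ and $[\theta_{2}]:(\{\vec x_{2}^{\vec A_{2}}.\phi_{2}\},a_{2})\to(\{\vec x^{\vec A}.\phi\},a)$ I would take $(\{\vec x_{1}^{\vec A_{1}},\vec x_{2}^{\vec A_{2}}.\,\theta_{1}(\vec x_{1})=\theta_{2}(\vec x_{2})\},\, \langle a_{1},a_{2}\rangle)$: by cartesianness of $F_{M}$ the interpretation $\llbracket \vec x_{1},\vec x_{2}.\,\theta_{1}=\theta_{2}\rrbracket_{M}$ is the pullback $\llbracket\vec x_{1}.\phi_{1}\rrbracket_{M}\times_{\llbracket\vec x.\phi\rrbracket_{M}}\llbracket\vec x_{2}.\phi_{2}\rrbracket_{M}$, and since $\llbracket\theta_{1}\rrbracket_{M}\circ a_{1}=a=\llbracket\theta_{2}\rrbracket_{M}\circ a_{2}$ the pair $(a_{1},a_{2})$ induces the required structure arrow $\langle a_{1},a_{2}\rangle:c\to \llbracket \vec x_{1},\vec x_{2}.\,\theta_{1}=\theta_{2}\rrbracket_{M}$; the universal property in $(c\downarrow F_{M})$ then falls out of the universal property of the pullback in ${\cal C}_{\mathbb T}$ together with that of the pullback defining $\llbracket \vec x_{1},\vec x_{2}.\,\theta_{1}=\theta_{2}\rrbracket_{M}$. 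This is precisely the computation already carried out in the proof that $\int M$ is geometric, now with $c$ in place of the terminal object of $\Set$, and with only the cartesian (not the full geometric) structure being needed.

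For the transition functor $u^{\ast}:(c_{2}\downarrow F_{M})\to(c_{1}\downarrow F_{M})$ attached to an arrow $u:c_{1}\to c_{2}$, I would observe that it is the identity on the ${\cal C}_{\mathbb T}$-component and is precomposition with $u$ on the structure arrow, hence commutes with the two projections to ${\cal C}_{\mathbb T}$. Consequently $u^{\ast}$ sends $(\{[].\top\},c_{2}\to 1)$ to $(\{[].\top\},c_{1}\to 1)$, the terminal object of $(c_{1}\downarrow F_{M})$, and sends the pullback object described above to $(\{\vec x_{1},\vec x_{2}.\,\theta_{1}=\theta_{2}\},\, \langle a_{1},a_{2}\rangle\circ u)=(\{\vec x_{1},\vec x_{2}.\,\theta_{1}=\theta_{2}\},\, \langle a_{1}u,a_{2}u\rangle)$, which by the same description computes the pullback of $u^{\ast}[\theta_{1}]$ and $u^{\ast}[\theta_{2}]$. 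Thus $u^{\ast}$ preserves terminal objects and pullbacks, i.e.\ is cartesian, as required.

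I do not expect a genuine obstacle here: everything is forced by $F_{M}$ being cartesian, and the behaviour of $u^{\ast}$ is automatic once one notes that it acts trivially on the syntactic component. The only point calling for a little care is to make explicit that the finite limits of $(c\downarrow F_{M})$ are genuinely computed upstairs in ${\cal C}_{\mathbb T}$ and merely decorated with the induced arrow out of $c$; alternatively, one may simply invoke the standard fact that for a finite-limit-preserving functor $G$ between finitely complete categories the comma category $(X\downarrow G)$ is finitely complete, with limits created by the projection to the domain of $G$.
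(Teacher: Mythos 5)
Your proposal is correct and follows essentially the same route as the paper: the stack property is taken from the earlier corollary, and the cartesianness of the fibres and transition functors is obtained by computing finite limits upstairs in ${\cal C}_{\mathbb T}$ via the cartesian functor $F_{M}$ and decorating them with the induced arrow out of $c$, precomposition with $u$ then preserving these limits. The only cosmetic difference is that you treat the terminal object and binary pullbacks explicitly (with the syntactic formulas), whereas the paper handles an arbitrary finite diagram at once via its limit in ${\cal C}_{\mathbb T}$.
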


\begin{proof}
This is a consequence of $ F_{M}$ being cartesian: indeed any finite diagram \[
(a_i : c \rightarrow \llbracket \vec{x}_i^{\vec{A}_i} . \phi_i \rrbracket_M )_{i \in I}
\]   
in $(c\downarrow F_{M})$ defines a unique arrow $ (a_i )_{i \in I} : c \rightarrow \lim_{i \in I}  \llbracket \vec{x}_i^{\vec{A}_i} . \phi_i \rrbracket_M \simeq  F_{M}( \lim_{i \in I} \{ \vec{x}_i^{\vec{A}_i} . \phi_i\} )$. Now, for any arrow $ b :c \rightarrow  \llbracket \vec{x}^{\vec{A}} . \phi \rrbracket_M $ equipped with a cone $( [\theta_i] : b \rightarrow a_i)_{i \in I}$ in $(c \downarrow F_{M})$, there is a canonical arrow $ [\theta]_\mathbb{T} : \{ \vec{x}^{\vec{A}} . \phi \} \rightarrow \lim_{i \in I}  \{ \vec{x}_i^{\vec{A}_i} . \phi_i \}  $ in $ \mathcal{C}_\mathbb{T}$ providing a factorization of the $ [\theta_i]_\mathbb{T}$'s, whence $ \llbracket \theta \rrbracket_M $ also factorizes each $ \llbracket \theta_i \rrbracket_M$, thus providing a universal factorization of the cone in $(c \downarrow F_{M})$. For any $u : c' \rightarrow c$, it is easy to see that the composite $ (a_i)_{i \in I}\circ u : c' \rightarrow F_{M}(\lim_{i \in I} \{ \vec{x}_i^{\vec{A}_i} . \phi_i \})  $ is the limit of the $ a_i \circ u$'s in $(c' \downarrow F_{M})$ by the uniqueness of the factorization of a cone through the limit.
\end{proof}

As a consequence, the fibred category $ \int \mathbb{M}=(i_{\cal C} \downarrow F_{M})$ is cartesian. Its objects are the pairs $ (c,a)$, where $ c$ is an object of $\mathcal{C}$ and $a$ is a $c$-indexed basic element $a : c \rightarrow \llbracket \vec{x}^{\vec{A}}. \phi \rrbracket_M$, while an arrow $ (c_1, a_1) \rightarrow (c_2, a_2)$ is a pair $ (u,\lbrack \theta \rbrack_\mathbb{T})$ consisting of an arrow $u : c_1 \rightarrow c_2$ in $\cal C$ and an arrow $\lbrack \theta \rbrack_\mathbb{T} $ in $\mathcal{C}_\mathbb{T}$ such that $ a_2\circ u = \llbracket  \theta \rrbracket_M \circ a_1$:
\[ 
\begin{tikzcd}
c_1 \arrow[d, "a_1"'] \arrow[r, "u"]                                                               & c_2 \arrow[d, "a_2"]                                  \\
\llbracket \vec{x}_1^{\vec{a_1}}. \phi_1 \rrbracket_M \arrow[r, "\llbracket  \theta \rrbracket_M"] & \llbracket \vec{x}_2^{\vec{a_2}}. \phi_2 \rrbracket_M
\end{tikzcd} \]

In order to complete our generalization of the construction of section 2, we need to equip the category $ \int \mathbb{M} $ with a Grothendieck topology representing the analogue of the antecedents topology. This is not straightforward since, in the general context, there are no vertical covers a priori. Indeed, one would be tempted to define in each fiber $(c \downarrow F_{M})$ a topology of $c$-indexed antecedents, made of families of triangles
 
\[ 
\begin{tikzcd}
                                                                                                     & c \arrow[ld, "b\,"{above}] \arrow[rd, "\llbracket \theta_i \rrbracket_M \circ b = a"] &                                                 \\
\llbracket \vec{x}_i^{\vec{A}_i}. \phi_i \rrbracket_M \arrow[rr, "\llbracket \theta_i \rrbracket_M"] &                                                                           & \llbracket \vec{x}^{\vec{A}}. \phi \rrbracket_M
\end{tikzcd} \]
for $ ([\theta_i] : \{ \vec{x_i}^{\vec{A_i}}. \phi_i  \} \rightarrow  \{ \vec{x}^{\vec{A}}. \phi\} )_{i \in I}$ a family in ${\cal B}_{\mathbb T}(\{ \vec{x}^{\vec{A}}. \phi\})$ and $b$ ranging over all the antecedents of $a$ along the arrows $ \llbracket \theta_i \rrbracket_{i \in I}$. However this would not work, since, though $F_{M}$ sends the family $ ([\theta_i] : \{ \vec{x_i}^{\vec{A_i}}. \phi_i  \} \rightarrow  \{ \vec{x}^{\vec{A}}. \phi\} )_{i \in I}$ to an epimorphic family in $ \mathcal{E}$, there is no reason for a $c$-indexed basic element $a$ to have a $c$-indexed antecedent along any of the $ \llbracket \theta_i \rrbracket_M$'s. In order to take the horizontal components into account, we need to leave the domain of \ac antecedent' generalized elements vary among the objects of $\cal C$, and require them to cover the fibers $ \llbracket \theta_i \rrbracket_M ^{-1}(a)$ of $a$ along the arrows $ \llbracket \theta_i \rrbracket_M$:
 
\[ 
\begin{tikzcd}
\coprod_{i \in I}  \llbracket \theta_i \rrbracket_M ^{-1}(a) \arrow[d] \arrow[r, two heads] \arrow[rd, "\lrcorner", very near start, phantom]         & c \arrow[d, "a"]                                \\
\underset{i \in I}{\coprod} \llbracket \vec{x_i}^{\vec{A_i}}. \phi_i \rrbracket_M \arrow[r, "\underset{i \in I}{\coprod} \llbracket \theta_i \rrbracket_M", two heads] & \llbracket \vec{x}^{\vec{A}}. \phi \rrbracket_M
\end{tikzcd} \]
(Note that, as in the set-based setting, some $ \llbracket \theta_i \rrbracket_M ^{-1}(a)$ may be empty, though they jointly cover $c$). In light of the characterization of the antecedents topology in the set-based setting in terms of lifted topologies, provided by Corollary \ref{corantecedenttopology}, we are thus led to defining the antecedents topology on $\int \mathbb{M}$ as follows: 

\begin{definition}
The \emph{antecedents topology} $J^{\textup{ant}}_M$ on $\int \mathbb{M}$ is the Grothendieck topology on $(i_{\cal C}\downarrow F_{M})$ induced by the $(f_{M}, J_{\mathbb T})$-topology on $(1_{\cal E}\downarrow F_{M})$ (in the sense of Definition \ref{defliftedtopology}); that is, it has as a basis the collection of families
\[  \begin{tikzcd}
\big{(}(d_{ij}, b_{ij}) \arrow[]{rr}{(u_{ij}, [ \theta_{i}]_{\mathbb{T}})} && (c,a))_{i \in I, \; j \in J_i}
\end{tikzcd}\]
where $a:c\to \llbracket \vec{x}^{\vec{A}}. \phi \rrbracket_M$ is a basic generalized element, $([\theta_i ]_\mathbb{T} : \{ \vec{x}_i^{\vec{A}_i}. \phi_i \} \rightarrow  \{ \vec{x}^{\vec{A}}. \phi \})_{i \in I}$ is a family in ${\cal B}_{\mathbb T}(\{ \vec{x}^{\vec{A}}. \phi \})$, $(u_{ij} : d_{ij} \rightarrow c)_{j \in J_i}$ is a family of arrows in $\cal C$ (for each $i\in I$) and $(b_{ij} : d_{ij} \rightarrow  \llbracket  \vec{x}_i^{\vec{A}_i}. \phi_i\rrbracket_M)_{j \in J_i}$ is a family of arrows (for each $i\in I$) making the diagrams
\[  
\begin{tikzcd}
d_{ij} \arrow[r, "u_{ij}"] \arrow[d, "b_{ij}"']                                          & c \arrow[d, "a"]                     \\
\llbracket \vec{x}_i^{\vec{A}_i}. \phi_i \rrbracket_M \arrow[r, "\llbracket \theta_i \rrbracket_M"] & \llbracket \vec{x}^{\vec{A}}. \phi \rrbracket_M
\end{tikzcd}  \]
commutative and such that the family of arrows
\[ (\widetilde{b_{ij}} : d_{ij} \rightarrow \langle \llbracket \theta_i \rrbracket_M \rangle^{-1}(a))_{j \in J_i}\]
as in the following diagram is epimorphic :
\[ 
\begin{tikzcd}
d_{ij} \arrow[rrd, "u_{ij}", bend left=20] \arrow[rdd, "b_{ij}"', bend right=20] \arrow[rd, dashed, "\widetilde{b_{ij}}"] &                                                                                             &                                      \\
                                                             & \llbracket \theta_i \rrbracket_M^{-1}(a) \arrow[r, "\pi_{i}"] \arrow[d] \arrow[rd, "\lrcorner", phantom, near start] & c \arrow[d, "a"]                     \\
                                                                                              & \llbracket \vec{x}_i^{\vec{A}_i}. \phi_i \rrbracket_M \arrow[r, "\llbracket \theta_i \rrbracket_M"]    & \llbracket \vec{x}^{\vec{A}}. \phi \rrbracket_M
\end{tikzcd} \]

\begin{remarks}
\begin{enumerate}
    \item[(a)] The covering families in the definition of $J^{ant}_M$ are indexed by a dependent sum, with first a set indexing a basic cover of $J_\mathbb{T}$ and for each term of this cover, a basic covering family. To obtain a more conventional presentation, one can equivalently use a single indexing set $J$ and require a family of squares $(\{ \theta_i \}, u_i)_{i \in I}$ to induce an epimorphism $ \langle ( b_i, u_i) \rangle_{i \in I}$ and have $ ( \theta_i)_{i \in I}$ in $J_\mathbb{T}$. In particular, for a presentation as above, take $J = \coprod_{i \in I} J_i$ and impose $ \{\theta_{(i,j)} \} := \{\theta_i \}$. 
    
    \item[(b)] As in the set-based case, the $\llbracket \theta_i \rrbracket_{M}$'s are only jointly epimorphic, so a generalized element $a:c\to \llbracket \vec{x}^{\vec{A}}. \phi \rrbracket_M$ may have no antecedent along a chosen $\theta_i$, that is, the corresponding set $J_i$ may be empty. However, antecedent families live over \emph{subfamilies} of $J_\mathbb{T}$-covering families, as shown by Proposition \ref{propcomorphismtosyntacticsite} below.
\end{enumerate}
\end{remarks}
\end{definition}
 
\begin{proposition}\label{propcomorphismtosyntacticsite}
There is a comorphism of sites $ (\int \mathbb{M}, J^{\textup{ant}}_{M}) \rightarrow (\mathcal{C}_\mathbb{T}, J_\mathbb{T})$ sending a basic generalized element $ a :c \rightarrow \llbracket \vec{x}^{\vec{A}}. \phi \rrbracket_M $ to the underlying sort $ \{ \vec{x}^{\vec{A}}. \phi \}$.
\end{proposition}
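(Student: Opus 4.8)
The plan is to identify the functor in the statement as the codomain projection $q:\int\mathbb{M}=(i_{\cal C}\downarrow F_{M})\to {\cal C}_{\mathbb T}$, which sends an object $(c,a)$ with $a:c\to\llbracket\vec{x}^{\vec{A}}.\phi\rrbracket_M$ to $\{\vec{x}^{\vec{A}}.\phi\}$ and an arrow $(u,[\theta]_{\mathbb T})$ to $[\theta]_{\mathbb T}$, and then to verify the cover-lifting property defining a comorphism of sites: for every object of $\int\mathbb{M}$ and every $J_{\mathbb T}$-covering family of its $q$-image one must exhibit a $J^{\textup{ant}}_M$-covering family of the object whose image under $q$ factors through the given family. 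Since ${\cal B}_{\mathbb T}$ is a basis for $J_{\mathbb T}$, it suffices to lift a family $\{[\theta_i]_{\mathbb T}:\{\vec{x}_i^{\vec{A}_i}.\phi_i\}\to\{\vec{x}^{\vec{A}}.\phi\}\mid i\in I\}\in{\cal B}_{\mathbb T}(\{\vec{x}^{\vec{A}}.\phi\})$, for a fixed object $(c,a)$ of $\int\mathbb{M}$ with $a:c\to\llbracket\vec{x}^{\vec{A}}.\phi\rrbracket_M$.

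First I would form, for each $i\in I$, the pullback $\llbracket\theta_i\rrbracket_M^{-1}(a)$ of $a$ along $\llbracket\theta_i\rrbracket_M$ in ${\cal E}$, with projections $\pi_i:\llbracket\theta_i\rrbracket_M^{-1}(a)\to c$ and $p_i:\llbracket\theta_i\rrbracket_M^{-1}(a)\to\llbracket\vec{x}_i^{\vec{A}_i}.\phi_i\rrbracket_M$. Since $({\cal C},J)$ is a site of definition for ${\cal E}$, the objects of ${\cal C}$ form a separating set, so I may choose an epimorphic family $\{\widetilde{b_{ij}}:d_{ij}\to\llbracket\theta_i\rrbracket_M^{-1}(a)\mid j\in J_i\}$ with every $d_{ij}$ in ${\cal C}$. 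Setting $u_{ij}:=\pi_i\circ\widetilde{b_{ij}}$ and $b_{ij}:=p_i\circ\widetilde{b_{ij}}$, the outer rectangle of the pullback square commutes, so $(u_{ij},[\theta_i]_{\mathbb T}):(d_{ij},b_{ij})\to(c,a)$ is a well-defined arrow of $\int\mathbb{M}$; and the family $\big((u_{ij},[\theta_i]_{\mathbb T}):(d_{ij},b_{ij})\to(c,a)\big)_{i\in I,\,j\in J_i}$ is precisely of the shape appearing in the defining basis of $J^{\textup{ant}}_M$, hence is $J^{\textup{ant}}_M$-covering. As $q$ carries $(u_{ij},[\theta_i]_{\mathbb T})$ to $[\theta_i]_{\mathbb T}$, every member of this family is sent by $q$ into the given covering family, which is the required cover-lifting condition; this proves that $q$ is a comorphism of sites.

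I do not anticipate a serious obstacle: the argument is bookkeeping with the pullback squares together with one use of the separating property of ${\cal C}$ to produce the covers $\{\widetilde{b_{ij}}\}$. The two points to keep an eye on are that, for each fixed $i$, the chosen family into $\llbracket\theta_i\rrbracket_M^{-1}(a)$ is jointly epimorphic — true by construction — and that all the arrows indexed by $j\in J_i$ carry the same syntactic arrow $\theta_i$, so that the family genuinely lies in the basis of $J^{\textup{ant}}_M$. A more structural alternative also works: by Corollary \ref{corantecedenttopology}(ii), $q$ is the restriction, along the $L_{(f_{M},J_{\mathbb T})}$-dense full inclusion $\int\mathbb{M}=(i_{\cal C}\downarrow F_{M})\hookrightarrow(1_{\cal E}\downarrow F_{M})$, of the canonical projection $(1_{\cal E}\downarrow F_{M})\to{\cal C}_{\mathbb T}$; the latter is a comorphism of sites for the Giraud topology induced by $J_{\mathbb T}$ by the very definition of that topology, hence also for the larger lifted topology $L_{(f_{M},J_{\mathbb T})}$, and a comorphism of sites restricts along a dense full subcategory, equipped with the induced topology, to a comorphism of sites — yielding the claim with $J^{\textup{ant}}_M$ in place of $L_{(f_{M},J_{\mathbb T})}$.
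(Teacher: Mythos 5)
Your proposal is correct, and its primary argument is genuinely more explicit than the paper's. The paper disposes of this proposition in one line: since $J^{\textup{ant}}_{M}$ is by definition the topology on $(i_{\cal C}\downarrow F_{M})$ induced by the $(f_{M},J_{\mathbb T})$-lifted topology on $(1_{\cal E}\downarrow F_{M})$, and the projection to ${\cal C}_{\mathbb T}$ is a comorphism for (the Giraud part of) that lifted topology by construction, the restricted projection is a comorphism — this is exactly the ``structural alternative'' you sketch at the end, including the standard fact that comorphisms restrict along dense full subcategories equipped with the induced topology. Your main proof instead verifies the cover-lifting property by hand: given $(c,a)$ and a family of ${\cal B}_{\mathbb T}(\{\vec{x}^{\vec{A}}.\phi\})$, you form the fibers $\llbracket\theta_i\rrbracket_M^{-1}(a)$, cover each of them by objects of $\cal C$ using the separating property, and observe that the resulting family $\big((u_{ij},[\theta_i]_{\mathbb T})\big)_{i\in I,\,j\in J_i}$ is literally of the basis shape for $J^{\textup{ant}}_M$ and is sent by the projection into the given cover; reducing from arbitrary $J_{\mathbb T}$-covering sieves to basis families is handled correctly. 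Note that your construction covers each fiber $\llbracket\theta_i\rrbracket_M^{-1}(a)$ separately, which satisfies the basis condition in its strongest reading (epimorphic families for each $i$), so the mild notational ambiguity in the paper's definition of $J^{\textup{ant}}_M$ causes no problem. What the explicit route buys is independence from Theorem \ref{thmliftedtopology} and Corollary \ref{corantecedenttopology} beyond the bare description of the basis, and a concrete picture of how covers lift; what the paper's route buys is brevity and the conceptual point that the comorphism property is inherited from the topos-level lifted topology.
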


\begin{proof}
This follows immediately from the characterization of $J^{\textup{ant}}_{M}$ as the Grothendieck topology on $(i_{\cal C}\downarrow F_{M})$ induced by the $(f_{M}, J_{\mathbb T})$-topology on $(1_{\cal E}\downarrow F_{M})$.
\end{proof}

\begin{remark}
The comorphism of sites of Proposition \ref{propcomorphismtosyntacticsite} is \emph{not} a fibration, unlike its topos-theoretic extension provided by the canonical projection functor $(1_{\cal E}\downarrow {f_{M}}^{\ast}) \to \Sh({\cal C}_{\mathbb T}, J_{\mathbb T})$. This explains why the antecedent topology is \emph{not} a lifted topology but rather a topology induced on a smaller subcategory by a lifted topology existing at the topos level, and hence that its description is more involved than that for its topos-theoretic extension. This is an illustration of the importance of developing \emph{invariant constructions} at the topos-theoretic level and of investigating  only later how such notions can be described at the level of sites.     
\end{remark}

Since $J^{\textup{ant}}_{M}$ is the Grothendieck topology on $(i_{\cal C}\downarrow F_{M})$ induced by the $(f_{M}, J_{\mathbb T})$-lifted topology on $(1_{\cal E}\downarrow F_{M})$, the canonical projection functor $\pi_{M}:{\int \mathbb{M}} \to {\cal C}$ to $\cal C$ is a comorphism of sites $(\int \mathbb{M}, J^{\textup{ant}}_{M}) \rightarrow (\mathcal{C},J)$.

We are now ready to define the over-topos at $M$ in the general setting:

\begin{definition}\label{overtopos}
The $ \mathbb{T}$-over-topos of $\mathcal{E}$ at $M$ is defined as \[ \mathcal{E}[M] \simeq \Sh(\textstyle\int \mathbb{M}, J^{\textup{ant}}_{M}) \stackrel{u_{M}}{\longrightarrow} \mathcal{E} \]
where $u_{M}$ is the geometric morphism induced by the comorphism of sites $\pi_{M} : (\int \mathbb{M}, J^{\textup{ant}}_\mathbb{M}) \rightarrow (\mathcal{C},J)$.
\end{definition}

\begin{remarks}

\begin{enumerate}
\item[(a)] The inverse image of $u_{M}$ is the pre-composition sending $ c$ to $ \Hom_{\mathcal{C}}(\pi_{M}(-),c)$ (since $\pi_{M}$ is continuous by Theorem 4.44 \cite{OCdenseness}), while the direct image is the restriction to sheaves of the right Kan extension along $\pi_{M}$.

\item[(b)] In the set-based case, choosing as a presentation site the category $ \{ * \}$ with the trivial topology on it, there is only one fiber, and the antecedents topology on $ \int M$ of Definition \ref{antecedents topology} is completely concentrated in it.
\end{enumerate}
\end{remarks}

\begin{theorem}\label{thmovertopos}
The $\cal E$-topos $u_{M}: \mathcal{E}[M]\to {\cal E}$  satisfies the universal property of the $\mathbb{T}$-over-topos at $ M$: that is, for any $\cal E$-topos $ g : \mathcal{G} \rightarrow \mathcal{E}$ there is an equivalence of categories
\[ \textup{\bf Geom}_{\mathcal{E}}[g, \mathcal{E}[\mathbb{M}]] \simeq \mathbb{T}[\mathcal{G}]/g^*M \]
natural in $g$.
\end{theorem}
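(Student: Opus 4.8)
The plan is to mimic the proof of the set-based theorem of Section~1, upgrading its coproduct-of-global-elements arguments to descent arguments controlled by the stack $\mathbb{M}$ and by the explicit basis for the antecedent topology produced in Corollary~\ref{corantecedenttopology} (ultimately by Theorem~\ref{thmliftedtopology}). Concretely, for every $\cal E$-topos $g:{\cal G}\to{\cal E}$ I would exhibit a chain of equivalences, pseudonatural in $g$,
\[
\textup{\bf Geom}_{{\cal E}}[g, u_{M}] \;\simeq\; \textup{\bf Flat}^{g}_{J^{\textup{ant}}_{M}}({\textstyle\int}\mathbb{M}, {\cal G}) \;\simeq\; \mathbb{T}[{\cal G}]/g^{\ast}M,
\]
where the middle term denotes the category of flat functors ${\int}\mathbb{M}\to{\cal G}$ which send $J^{\textup{ant}}_{M}$-covering families to epimorphic families and are compatible with $g$ through the comorphism $\pi_{M}$ (equivalently, cartesian morphisms of stacks $\mathbb{M}\to{\cal G}/g^{\ast}$ which invert antecedent covers).

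For the left-hand equivalence I would argue as follows. Since $J^{\textup{ant}}_{M}\supseteq J^{\textup{Gir}}_{\mathbb{M}}$ — every $J^{\textup{Gir}}_{\mathbb{M}}$-covering sieve contains a cartesian lift of a $J$-cover, which is precisely the antecedent cover associated with the trivial $J_{\mathbb T}$-cover — the topos ${\cal E}[M]=\Sh(\int\mathbb{M}, J^{\textup{ant}}_{M})$ is a subtopos of the classifier ${\cal E}[\mathbb{M}]=\Sh(\int\mathbb{M}, J^{\textup{Gir}}_{\mathbb{M}})$ of the cartesian stack $\mathbb{M}$, and $u_{M}$ factors as ${\cal E}[M]\hookrightarrow{\cal E}[\mathbb{M}]\xrightarrow{p_{\mathbb{M}}}{\cal E}$. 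By Theorem~\ref{thmGiraud}, $\textup{\bf Geom}_{{\cal E}}[g, p_{\mathbb{M}}]\simeq\St_{\textup{cart}}({\cal C},J)[\mathbb{M}, {\cal G}/g^{\ast}]$, and $\textup{\bf Geom}_{{\cal E}}[g, u_{M}]$ is the full subcategory on those geometric morphisms ${\cal G}\to{\cal E}[\mathbb{M}]$ that factor through the subtopos, i.e.\ whose associated cartesian morphism of stacks (equivalently, flat functor $\int\mathbb{M}\to{\cal G}$) inverts the antecedent covers. Alternatively, this equivalence can be obtained directly from the theory of comorphisms of sites, $\pi_{M}$ being the comorphism that induces $u_{M}$ (cf.\ \cite{OCdenseness}).

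The core of the proof is the right-hand equivalence, where the model theory enters, and here I would follow Section~1 step by step. Going forward, a $\mathbb T$-model homomorphism $h:N\to g^{\ast}M$ yields the functor $\overline{h}(c,\,a\colon c\to\llbracket\vec{x}^{\vec{A}}.\phi\rrbracket_{M}):=g^{\ast}(a)^{\ast}(h_{\{\vec{x}^{\vec{A}}.\phi\}})$, viewed as an object of ${\cal G}/g^{\ast}(c)$; its cartesianness comes from $F_{N}$ and $F_{g^{\ast}M}$ being cartesian together with the pullback pasting lemma (the analogue of the fibre computation in Section~1), and its $J^{\textup{ant}}_{M}$-continuity comes from the fact that a $\mathbb T$-model sends $J_{\mathbb T}$-covering families to epimorphic families, which, pulled back along $g^{\ast}(a)$ and combined with the epimorphic families $\widetilde{b_{ij}}$ into the fibres $\llbracket\theta_{i}\rrbracket_{M}^{-1}(a)$, remain epimorphic in ${\cal G}$ by stability of epimorphisms under pullback — this is exactly the ``big cube'' diagram of Section~1, now internal to ${\cal G}$. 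Going backward, from a flat $J^{\textup{ant}}_{M}$-continuous functor $G$ (equivalently, a cartesian morphism of stacks $\alpha:\mathbb{M}\to{\cal G}/g^{\ast}$ inverting antecedent covers) I would reconstruct a $\mathbb T$-model $N$ in ${\cal G}$: for each $\{\vec{x}^{\vec{A}}.\phi\}$ one covers $\llbracket\vec{x}^{\vec{A}}.\phi\rrbracket_{M}$ in ${\cal E}$ by basic generalized elements $a_{k}\colon c_{k}\to\llbracket\vec{x}^{\vec{A}}.\phi\rrbracket_{M}$ and glues the objects $\alpha_{c_{k}}(a_{k})\in{\cal G}/g^{\ast}(c_{k})$ along the descent data supplied by the naturality of $\alpha$ — using that ${\cal G}$ is a topos and that $g^{\ast}$ preserves the relevant colimits — obtaining an object $N_{\{\vec{x}^{\vec{A}}.\phi\}}$ over $\llbracket\vec{x}^{\vec{A}}.\phi\rrbracket_{g^{\ast}M}$. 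Functoriality in $\{\vec{x}^{\vec{A}}.\phi\}$ gives a functor $F_{N}:{\cal C}_{\mathbb T}\to{\cal G}$, cartesian by cartesianness of $\alpha$ and stability of colimits under pullback, and $J_{\mathbb T}$-continuous precisely because $\alpha$ inverts antecedent covers; so $N:=F_{N}$ is a $\mathbb T$-model and the gluing data assemble into a homomorphism $N\to g^{\ast}M$. I would then check that the two assignments are mutually quasi-inverse (a diagram chase paralleling the end of the Section~1 proof) and pseudonatural in $g$.

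I expect the main obstacle to be the backward reconstruction of the global $\mathbb T$-model $N$ and the verification that $F_{N}$ is $J_{\mathbb T}$-continuous. In the set-based case this was essentially free, since $\llbracket\vec{x}^{\vec{A}}.\phi\rrbracket_{N}$ was literally $\coprod_{\vec{a}}N^{\vec{a}}_{\{\vec{x}^{\vec{A}}.\phi\}}$, a fact resting on the indecomposability of $1$ and the disjointness and pullback-stability of coproducts in $\Set$; over an arbitrary topos one must instead glue the fibrewise data by a genuine descent argument, showing in particular that the gluing is independent of the chosen covering by basic elements (two such coverings admit a common refinement, so the gluings agree by the sheaf condition) and that the explicit form of the antecedent covers from Corollary~\ref{corantecedenttopology} is exactly what forces $F_{N}$ to send $J_{\mathbb T}$-covers to epimorphic families. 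Controlling the coherence isomorphisms throughout this gluing, and in the pseudonaturality in $g$, is the delicate bookkeeping the proof must carry out.
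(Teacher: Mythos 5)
Your proposal is correct in substance and follows the same overall architecture as the paper (Giraud's Theorem \ref{thmGiraud} to pass between geometric morphisms over $\cal E$ and cartesian, $J^{\textup{ant}}_{M}$-continuous data on $\int\mathbb{M}$; the pullback construction $N_{(c,a)}=g^{\ast}(a)^{\ast}(h_{\{\vec{x}^{\vec{A}}.\phi\}})$ in one direction; a reconstruction of a $\mathbb T$-model homomorphism in the other), but it diverges from the paper at the one genuinely delicate point, the reconstruction of $N$ from a flat $J^{\textup{ant}}_{M}$-continuous functor. Where you propose to choose a covering of $\llbracket\vec{x}^{\vec{A}}.\phi\rrbracket_{M}$ by basic generalized elements and glue the objects $\alpha_{c_k}(a_k)$ by descent (which then forces you to cover the overlaps $c_k\times_{\llbracket\vec{x}^{\vec{A}}.\phi\rrbracket_{M}}c_l$, which need not lie in $\cal C$, verify cocycle conditions, and prove independence of the chosen cover by common refinement), the paper simply sets $N_{\{\vec{x}^{\vec{A}}.\phi\}}:=\colim_{(i_{\cal C}\downarrow \llbracket\vec{x}^{\vec{A}}.\phi\rrbracket_{M})}N_{(c,a)}$, the colimit over the \emph{whole} (small) comma category of basic generalized elements; this is canonical, requires no choices or descent bookkeeping, gives functoriality in $\{\vec{x}^{\vec{A}}.\phi\}$ by the universal property of the colimit, yields cartesianness from the cartesianness of $f^{\ast}$ together with the universality (pullback-stability) of colimits, and reduces $J_{\mathbb T}$-continuity to testing against basic generalized elements, where the explicit antecedent covers of Corollary \ref{corantecedenttopology} do the work — exactly the point you correctly identified as the crux. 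Conversely, your observation that $J^{\textup{Gir}}_{\mathbb{M}}\subseteq J^{\textup{ant}}_{M}$ (via the trivial syntactic cover), so that ${\cal E}[M]$ is a subtopos of the stack classifier $\Sh(\int\mathbb{M},J^{\textup{Gir}}_{\mathbb{M}})$ and $\textup{\bf Geom}_{\cal E}[g,u_M]$ is the full subcategory of stack morphisms inverting antecedent covers, is a tidy way of making explicit an identification the paper uses only implicitly when it invokes Theorem \ref{thmGiraud}; similarly, the paper's continuity check for the pullback-defined functor is organized slightly differently from your ``big cube'': it passes to the extension on $(1_{\cal E}\downarrow F_{M})$ and verifies continuity separately for the two generating classes of the lifted topology (cartesian lifts of $J$-covers, using that $g^{\ast}$ sends them to epimorphic families, and cartesian lifts of $J_{\mathbb T}$-covers, using that $N$ is a $\mathbb T$-model), which is equivalent to, but cleaner than, treating the multicomposite covers at once. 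In short: your route works, but the descent-gluing step you flag as the main obstacle is precisely what the paper's colimit-over-the-comma-category construction is designed to avoid.
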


\begin{proof}
The proof naturally generalizes that for a set-based model. 

In one direction, suppose that $f$ is a geometric morphism over $ \mathcal{E}$ from $g$ to $u_{M}$:
\[ 
\begin{tikzcd}
\mathcal{G} \arrow[rr, "f"] \arrow[rd, "g"'] &             & {\mathcal{E}[\mathbb{M}]} \arrow[ld, "u_{M}"] \\
                                             & \mathcal{E} &                                                     
\end{tikzcd} \]
This defines a $J^{\textup{ant}}_{M}$-continuous, cartesian functor 
\[ \textstyle\int \mathbb{M} \stackrel{f^*}{\longrightarrow} \mathcal{G}. \]
By Theorem \ref{thmGiraud}, it also induces a morphism of cartesian stacks over $\mathcal{E}$ 
\[ \mathbb{M} \stackrel{f}{\longrightarrow} \mathcal{G}/g^* \]
whose components 
\[ (c \downarrow F_{M}) \stackrel{f_c}{\longrightarrow} \mathcal{G}/g^*c \]
(for each $c $ in $ \mathcal{C}$) are cartesian functors sending a given basic generalized element $ a : c \rightarrow \llbracket \vec{x}^{\vec{A}}. \phi \rrbracket_M $ to a certain arrow $ f_c(a) : N_{(c,a)} \rightarrow g^*c $ in ${\cal G}$. We want to associate with $f$ a morphism $\tilde{f} : N \rightarrow g^*M$ in $\mathbb{T}[\mathcal{G}]$. For each $  \{ \vec{x}^{\vec{A}}. \phi\} $ in $ \mathcal{C}_\mathbb{T}$, the category $ (i_{\cal C}\downarrow \llbracket \vec{x}^{\vec{A}}. \phi \rrbracket_M)$ of basic generalized elements is small; we can thus define 
\[ N_{ \{ \vec{x}^{\vec{A}}. \phi\}} := \underset{(i_{\cal C}\downarrow \{ \vec{x}^{\vec{A}}. \phi\})}{\colim} N_{(c,a)} \simeq \underset{c \in \mathcal{C}}{\colim} \coprod_{a\in \Hom_{\mathcal{E}}(c, \llbracket \vec{x}^{\vec{A}}. \phi \rrbracket_M)} N_{(c,a)} \]
and, by using the universal property of the colimit, an arrow $ f_{\{ \vec{x}^{\vec{A}}. \phi\}}$ as in the following diagram (where the $j_a$'s are the legs of the colimit cocone): 
\[ 
\begin{tikzcd}[column sep=large]
N_{(c,a)} \arrow[r, "f_c(a)"]  \arrow[d, "j_a"'] & g^*c \arrow[d, "g^*(a)"]                              \\
N_{\{ \vec{x}^{\vec{A}}. \phi\}} \arrow[r, dashed, "  f_{\{ \vec{x}^{\vec{A}}. \phi\}} "]   & \llbracket \vec{x}^{\vec{A}}. \phi \rrbracket_{g^*M }
\end{tikzcd} \]

Let us first check this yields a $ J_\mathbb{T}$-continuous cartesian functor $ N : (\mathcal{C}_\mathbb{T}, J_\mathbb{T}) \rightarrow \mathcal{G}$, showing how the above assignment on objects naturally extends to arrows. 

Given an arrow $[\theta]_{\mathbb T}:\{\vec{x_1}^{\vec{A_{1}}}. \phi_1\} \to \{\vec{x_2}^{\vec{A_{2}}}. \phi_2\}$ in ${\cal C}_{\mathbb T}$, composing with $\llbracket \theta \rrbracket_M$ allows one to associate with any generalized element $ a : c \rightarrow \llbracket \vec{x_1}^{\vec{A_1}}. \phi \rrbracket_M$ a generalized element $\llbracket \theta \rrbracket_M\circ a:c \to \llbracket \vec{x_2}^{\vec{A_2}}. \phi \rrbracket_M$. So by the functoriality of $f_{c}$ we have:
\[ 
\begin{tikzcd}
{N_{(c,a)}} \arrow[rr, "f_c(\lbrack \theta \rbrack_\mathbb{T})"] \arrow[rd, "f_c(a)"'] &        & {N_{(c, \llbracket \theta \rrbracket_M \circ a)}} \arrow[ld, "f_c(\llbracket \theta \rrbracket_M \circ a)"] \\
                                                                                         & g^*(c) &
\end{tikzcd} \]

We can thus define $N_{[\theta]_{\mathbb T}}:N_{\{ \vec{x_1}^{\vec{A_1}}. \phi_1\}}  \to N_{\{ \vec{x_2}^{\vec{A_2}}. \phi_2\}}$ as the arrow  determined by the universal property of the colimit by the requirement that all the diagrams of the form
\[
\begin{tikzcd}[column sep=large]
N_{(c,a)} \arrow[r, "f_c(\lbrack \theta \rbrack_\mathbb{T})"]  \arrow[d, "j_a"'] & N_{(c, \llbracket \theta \rrbracket_M \circ a)} \arrow[d, "j_{\llbracket \theta \rrbracket_M \circ a}"]                              \\
N_{\{ \vec{x_1}^{\vec{A_1}}. \phi_1\}} \arrow[r, dashed, "N_{[\theta]_{\mathbb T}}"]   & N_{\{ \vec{x_2}^{\vec{A_2}}. \phi_2\}} 
\end{tikzcd} \]
should commute.

The fact that $N$ is cartesian follows at once from the fact that the functor $f^{\ast}:{\int {\mathbb M}}\to {\cal G}$ is, in light of the definition of $N$ in terms of colimits and of the stability of these under pullback. So it remains to prove its $J_{\mathbb T}$-continuity.

Let $([\theta_i ]_\mathbb{T} : \{ \vec{x}_i^{\vec{A}_i}. \phi_i \} \rightarrow  \{ \vec{x}^{\vec{A}}. \phi \})_{i \in I}$ be a family in ${\cal B}_{\mathbb T}(\{ \vec{x}^{\vec{A}}. \phi \})$; we want to show that the family $\big{(}N_{[\theta_i ]_\mathbb{T}} : N_{\{ \vec{x}_i^{\vec{A}_i}. \phi_i \}} \rightarrow  N_{\{ \vec{x}^{\vec{A}}. \phi \}}\big{)}_{i \in I}$ is epimorphic. First, we notice that this condition can be conveniently phrased in terms of basic generalized elements, as follows: for any basic generalized element $w:e\to N_{\{ \vec{x}^{\vec{A}}. \phi \}}$ there are an epimorphic family $\{u_{k}:e_k \to e \mid k\in K\}$ lying in $\cal C$ and for each $k\in K$ an element $i_{k}\in I$ and a basic generalized element $w_{k}:e_{k}\to N_{\{ \vec{x_{i_{k}}}^{\vec{A_{i_{k}}}}. \phi_{i_{k}} \}}$ such that the diagram 
\[  
\begin{tikzcd}
e_{k}\arrow[r, "u_{k}"] \arrow[d, "w_{k}"] & e \arrow[d, "w"]                     \\
N_{\{ \vec{x_{i_{k}}}^{\vec{A_{i_{k}}}}. \phi_{i_{k}} \}} \arrow[r, "N_{[\theta_{i_{k}}]_{\mathbb T}}"] &  N_{\{ \vec{x}^{\vec{A}}. \phi \}}
\end{tikzcd}  \]
commutes. Since colimits yield epimorphic families in a topos, in light of the definition of $N$ we can further rewrite this condition as follows: for any basic generalized element $w:e\to N_{(c, a)}$, where $a:c\to \llbracket \vec{x}^{\vec{A}}. \phi \rrbracket_M$, there are an epimorphic family $\{u_{k}:e_k \to e \mid k\in K\}$ lying in $\cal C$ and for each $k\in K$ an element $i_{k}\in I$, an object $(d_k, b_{k})$ of the category $(i_{\cal C}\downarrow \llbracket \vec{x_{i_{k}}}^{\vec{A_{i_{k}}}}. \phi_{i_{k}} \rrbracket_M)$ and an arrow $w_{k}:e_{k}\to N_{(d_k, b_k)}$ such that the diagram 
\[  
\begin{tikzcd}
e_{k}\arrow[r, "u_{k}"] \arrow[d, "w_{k}"] & e \arrow[d, "w"]                     \\
N_{(d_k, b_{k})} \arrow[r, "N_{[\theta_{i_{k}}]_{\mathbb T}}"] &  N_{(c, a)}
\end{tikzcd}  \] 
commutes.

Now, given a generalized element $ a : c \rightarrow \llbracket \vec{x}^{\vec{A}}. \phi \rrbracket_M$, we may obtain a covering family for the antecedent topology $J^{\textup{ant}}_{M}$ (with respect to our original $J_{\mathbb T}$-cover) by covering each of the fibers $\llbracket \theta_i \rrbracket_M^{-1}(a)$ by basic generalized elements $\widetilde{b_{ij}}:d_{ij}\to \llbracket \theta_i \rrbracket_M^{-1}(a)$:
\[ 
\begin{tikzcd}
d_{ij} \arrow[rrd, "u_{ij}", bend left=20] \arrow[rdd, "b_{ij}"', bend right=20] \arrow[rd, dashed, "\widetilde{b_{ij}}"] &                                                                                             &                                      \\
                                                                                              & \llbracket \theta_i \rrbracket_M^{-1}(a) \arrow[r, "\pi_{i}"] \arrow[d, "\llbracket \theta_i \rrbracket^{\ast}_M(a)"] \arrow[rd, "\lrcorner", phantom, very near start] & c \arrow[d, "a"]                     \\
                                                                                              & \llbracket \vec{x}_i^{\vec{A}_i}. \phi_i \rrbracket_M \arrow[r, "\llbracket \theta_i \rrbracket_M"]    & \llbracket \vec{x}^{\vec{A}}. \phi \rrbracket_M
\end{tikzcd} \]

The functor $f^{\ast}$ being $J^{\textup{ant}}_{M}$-continuous, $f^{\ast}$ sends this family to an epimorphic family
\[
\big{(} f(u_{ij}, [\theta_i]_{\mathbb T}): N_{(d_{ij}, b_{ij})} \to N_{(c, a)} \big{)}_{ i\in I,\, j\in J_i}
\]
in $\cal G$; but this clearly implies our thesis.

Conversely, let $ f : N \rightarrow g^*M$ be a morphism of $ \mathbb{T}$-models in $ \mathcal{G}$. We can regard $f$ as a $ \mathcal{C}_\mathbb{T}$-indexed family $\{f_{\{\vec{x}^{\vec{A}}. \phi \} }: \llbracket \vec{x}^{\vec{A}}. \phi \rrbracket_N  \to \llbracket \vec{x}^{\vec{A}}. \phi \rrbracket_{g^*M }\}$ of morphisms in $ \mathcal{G}$ (subject to the naturality conditions). For each basic generalized element $ a : c \rightarrow \llbracket \vec{x}^{\vec{A}}. \phi \rrbracket_{M }$, we define $ N_{(c,a)}$ as the following pullback: 
\[ 
\begin{tikzcd}[column sep=large]
N_{(c,a)} \arrow[r] \arrow[d] \arrow[rd, "\lrcorner", very near start, phantom]                                  & g^*c \arrow[d, "g^*(a)"]                              \\
\llbracket \vec{x}^{\vec{A}}. \phi \rrbracket_N \arrow[r, "f_{\{\vec{x}^{\vec{A}}. \phi \} }"] & \llbracket \vec{x}^{\vec{A}}. \phi \rrbracket_{g^*M }
\end{tikzcd} \]
Given a morphism $(u, \lbrack \theta \rbrack_\mathbb{T} ) : (c_1, a_1) \rightarrow (c_2, a_2)$ in $ \int \mathbb{M}$, we have by the naturality of $f$ (seen as a morphism of $ J_\mathbb{T}$-continuous cartesian functors) the following commutative square: 
\[ 
\begin{tikzcd}[column sep= large]
\llbracket \vec{x}_1^{\vec{a_1}}. \phi_1 \rrbracket_N  \arrow[d, "\llbracket \theta \rrbracket_{N}"'] \arrow[r, "f_{ \{ \vec{x}_1^{\vec{a_1}}. \phi_1  \} } "] & \llbracket \vec{x}_1^{\vec{a_1}}. \phi_1 \rrbracket_{g^*M} \arrow[d, "\llbracket \theta \rrbracket_{g^*M}"] \\
\llbracket \vec{x}_2^{\vec{a_2}}. \phi_2 \rrbracket_N  \arrow[r, "f_{ \{ \vec{x}_2^{\vec{a_2}}. \phi_2  \} } "]                                                & \llbracket \vec{x}_2^{\vec{a_2}}. \phi_2 \rrbracket_{g^*M}                                                 
\end{tikzcd}\]

We can thus define ${N_{(u,\lbrack \theta \rbrack_\mathbb{M})}}:N_{(c_1, a_1)} \to N_{(c_2, a_2)}$ as the unique arrow making the following diagram (where the front and back faces are pullback) commute:
 
\[ 
\begin{tikzcd}[column sep=small, row sep=small]
{N_{(c_1, a_1)}} \arrow[dd] \arrow[rr] \arrow[rd, "{N_{(u,\lbrack \theta \rbrack_\mathbb{M})}}", dashed]                  &                                                                                                                  & g^*(c_1) \arrow[rd, "g^*(u)"] \arrow[d, "a_1", no head]                                                      &                                                             \\
                                                                                                                          & {N_{(c_2, a_2)}} \arrow[dd] \arrow[rr] \arrow[rrdd, "\lrcorner", very near start, phantom]                                        & {} \arrow[d]                                                                                                 & g^*(c_2) \arrow[dd, "a_2"]                                  \\
\llbracket \vec{x}_1^{\vec{a_1}}. \phi_1 \rrbracket_N  \arrow[rd, "\llbracket \theta \rrbracket_{N}"'] \arrow[r, no head] & {} \arrow[r, "f_{ \{ \vec{x}_1^{\vec{a_1}}. \phi_1  \} } "]                                                      & \llbracket \vec{x}_1^{\vec{a_1}}. \phi_1 \rrbracket_{g^*M} \arrow[rd, "\llbracket \theta \rrbracket_{g^*M}"] &                                                             \\
                                                                                                                          & \llbracket \vec{x}_2^{\vec{a_2}}. \phi_2 \rrbracket_N  \arrow[rr, "f_{ \{ \vec{x}_2^{\vec{a_2}}. \phi_2  \} } "] &                                                                                                              & \llbracket \vec{x}_2^{\vec{a_2}}. \phi_2 \rrbracket_{g^*M} 
\end{tikzcd} \]

This yields a functor  
\[ \textstyle\int \mathbb{M} \stackrel{N_{(-)}}{\longrightarrow} \mathcal{G}. \]

We want to show that this functor is cartesian and $ J^{\textup{ant}}_{M}$-continuous. The fact that it is cartesian follows easily from the fact that the functor $F_{N}:{\cal C}_{\mathbb T}\to {\cal G}$ corresponding to the model $N$ is, in light of the construction of finite limits in the category ${\int \mathbb{M}}$ provided by Proposition \ref{propcartesianstructure}.

Concerning $J^{\textup{ant}}_{M}$-continuity, we shall consider the extension of $N_{(-)}$ to the category $(1_{\cal E}\downarrow F_{M})$ and show its continuity with respect to the extended topology, as the latter admits a more natural characterization as the $(f_{M}, J_{\mathbb T})$-lifted topology; recall that this topology has a basis consisting of multicomposites of covering families of horizontal arrows for the fibration $r_{f_{M}}$ and of covering families of horizontal arrows for the fibration $t_{f_{M}}$, so continuity can be checked separately with respect to each of these families.

Let us start by showing the continuity with respect to the covering families of horizontal arrows for $t_{f_{M}}$; this can be checked without any problems in terms of the site of definition $({\cal C}, J)$ for $\cal E$. For a $J$-covering family $ (u_i : c_i \rightarrow c)_{i \in I}$ and a generalized element $ a : c \rightarrow \llbracket \vec{x}^{\vec{A}}. \phi \rrbracket_M$, consider the following diagram: 
\[ 
\begin{tikzcd}
N_{(c_i, a\circ u_i)}  \arrow[d, "{N_{(u_i, 1_{\{\vec{x}^{\vec{A}}. \phi  \} })}}"'] \arrow[r]              & g^*c_i \arrow[d, "g^*u_i"]                            \\
{N_{(c,a)}} \arrow[r] \arrow[d] \arrow[rd, "\lrcorner", very near start, phantom] & g^*c \arrow[d, "g^*(a)"]                              \\
\llbracket \vec{x}^{\vec{A}}. \phi \rrbracket_{N} \arrow[r, "f_{\{\vec{x}^{\vec{A}}. \phi \}}"', no head]                      & \llbracket \vec{x}^{\vec{A}}. \phi \rrbracket_{g^*M }
\end{tikzcd} \]
The lower square and the outer rectangle are pullbacks, whence by the pullback lemma the upper square is also a pullback. Since $({\cal C}, J)$ is a site of definition for $\cal E$, $g^{\ast}$ sends $J$-covering families to epimorphic families; so we have a pullback of epimorphisms 
\[ 
\begin{tikzcd}
{\underset{i \in I}{\coprod} N_{(c_i, a\circ u_{i})}} \arrow[d, "{\langle N_{(u_i, 1_{ \{ \vec{x}^{\vec{A}}. \phi  \} })} \rangle_{i \in I}}"', two heads] \arrow[r] \arrow[rd, "\lrcorner", very near start, phantom] & \underset{i \in I}{\coprod} g^*c_i \arrow[d, "\langle g^*u_i\rangle_{i\in I}", two heads] \\
{N_{(c,a)}} \arrow[r]                                                                                                                                                                               & g^*c                                                             
\end{tikzcd} \]
ensuring that $N_{(-)}$ sends the horizontal covering family 
\[
\big{(} (u_{i}, 1_{\{\vec{x}^{\vec{A}}. \phi\}}):(c_i, \, (\{\vec{x}^{\vec{A}}. \phi\}, a\circ u_i)) \to (c, \, (\{\vec{x}^{\vec{A}}. \phi\}, a))\big{)}_{i\in I} 
\]
to an epimorphic family. 

Now we turn to the proof of the continuity of $N_{(-)}$ with respect to the covering families of horizontal arrows for the fibration $r_{f_{M}}$. 

Given a family $\big{(}[\theta_i]:\{\vec{x_i}. \phi_i\} \to \{\vec{x}. \phi\}\big{)}_{i\in I}$ in ${\cal B}_{\mathbb T}$ and a generalized element $a:c\to \llbracket \vec{x}^{\vec{A}}. \phi \rrbracket_{M}$, we want to prove that $N_{(-)}$ sends the family
\[
\big{(} (\pi_{i}, [\theta_i]):(\llbracket \theta_i \rrbracket_M^{-1}(a), (\{\vec{x_i}. \phi_i\}, \llbracket \theta_i \rrbracket_{M}^{\ast}(a))) \to (c, ( \{\vec{x}. \phi\}, a))\big{)}_{i\in I}  
\]
in $\int {\mathbb M}$ to an epimorphic family.

Consider, for each $i\in I$, the following diagram:

\[ 
\begin{tikzcd}[column sep=small, row sep=small]
{N_{(\llbracket \theta_i \rrbracket^{-1}(a), \llbracket \theta_i \rrbracket^{\ast}(a))}} \arrow[dd] \arrow[rr] \arrow[rd, "{N_{(\pi_{i},\lbrack \theta_i \rbrack)}}", dashed]                  &                                                                                                                  & g^*(\llbracket \theta_i \rrbracket^{-1}(a)) \arrow[rd, "g^*(\pi_i)"] \arrow[d, "\llbracket \theta_i \rrbracket^{\ast}(a))", no head]                                                      &                                                             \\
                                                                                                                          & {N_{(c, a)}} \arrow[dd] \arrow[rr] \arrow[rrdd, "\lrcorner", very near start, phantom]                                        & {} \arrow[d]                                                                                                 & g^*(c) \arrow[dd, "a"]                                  \\
\llbracket \vec{x_i}^{\vec{A_i}}. \phi_i \rrbracket_N  \arrow[rd, "\llbracket \theta_i \rrbracket_{N}"'] \arrow[r, no head] & {} \arrow[r, "f_{ \{ \vec{x_i}^{\vec{A}_i}. \phi_i  \} } "]                                                      & \llbracket \vec{x_i}^{\vec{A_i}}. \phi_i \rrbracket_{g^*M} \arrow[rd, "\llbracket \theta_{i} \rrbracket_{g^*M}"] &                                                             \\
                                                                                                                          & \llbracket \vec{x}^{\vec{A}}. \phi \rrbracket_N  \arrow[rr, "f_{ \{ \vec{x}^{\vec{A}}. \phi  \} } "] &                                                                                                              & \llbracket \vec{x}^{\vec{A}}. \phi \rrbracket_{g^*M} 
\end{tikzcd} \]

The front square is a pullback by definition of $N_{(c, a)}$, the back square is a pullback by definition of $N_{(\llbracket \theta_i \rrbracket^{-1}(a), \llbracket \theta_i \rrbracket^{\ast}(a))}$ and the right-hand lateral one is a pullback since $g^{\ast}$ is cartesian. So the composite of the back square with the right-hand lateral square is a pullback, and hence by the pullback lemma the left-hand lateral square is also a pullback (as the front square is a pullback). Our thesis thus follows from the fact that, since $N$ is a $\mathbb T$-model by our hypotheses, the family
\[
\big{(}\llbracket \theta_i \rrbracket_{N}: \llbracket \vec{x_i}^{\vec{A_{i}}}. \phi_i \rrbracket_{N} \to \llbracket \vec{x}^{\vec{A}}. \phi \rrbracket_{N}\big{)}_{i\in I}
\]
is epimorphic.

Finally, proving that the functors defined above are mutually quasi-inverses is a straightforward exercise which we leave to the reader. 
\end{proof}

To conclude this section, we remark that our construction of the over-topos is functorial; that is, a morphism of $ \mathbb{T}$-models naturally induces a canonical morphism between the corresponding $\mathbb{T}$-over-toposes. Let $ f : M_1 \rightarrow M_2$ be a morphism in $ \mathbb{T} [\mathcal{E}]$. This is a same as a natural transformation 
\[ 
\begin{tikzcd}
{(\mathcal{C}_\mathbb{T}, J_\mathbb{T}) } \arrow[rr, "M_1", bend left=20, start anchor=10, ""{name=U, below, inner sep=0.1pt}] \arrow[rr, "M_2"', bend right=20, ""{name=D, inner sep=0.1pt}, start anchor=350] &  & \mathcal{E} \arrow[from=U, to=D, Rightarrow]{}{f}
\end{tikzcd} \]
whose components
\[ \begin{tikzcd}
\llbracket \vec{x}^{\vec{A}}. \phi \rrbracket_{ M_1 } \arrow[]{rr}{ f_{ \{ \vec{x}^{\vec{A}}. \phi \} }} &&  \llbracket \vec{x}^{\vec{A}}. \phi \rrbracket_{ M_2 }
\end{tikzcd}    \]
are indexed by the objects of $\mathcal{C}_\mathbb{T}$. This induces an indexed functor $ \alpha : \mathbb{M}_1 \Rightarrow \mathbb{M}_2$ assigning to each object $c $ of $\mathcal{C}$ the functor 
\[ \begin{tikzcd}
 (c \downarrow F_{M_1}) \arrow[]{rr}{\alpha_c = (c \downarrow f)} && (c \downarrow F_{M_2})
\end{tikzcd}  \]
sending a basic global element $ a : c \rightarrow \llbracket \vec{x}^{\vec{A}}. \phi \rrbracket_{ M_1 } $ to $f_{ \{ \vec{x}^{\vec{A}}. \phi \}} \circ a : c \rightarrow  \llbracket \vec{x}^{\vec{A}}. \phi \rrbracket_{ M_2 }$. This clearly yields a comorphism of sites 
\[
((i_{\cal C} \downarrow F_{M_1}), J^{\textup{ant}}_{M_1}) \to ((i_{\cal C}\downarrow F_{M_2}), J^{\textup{ant}}_{M_{2}})
\]
over $\cal C$, which thus induces a geometric morphism 
\[
{\cal E}[\alpha]:{\cal E}[{\mathbb M}_{1}] \to {\cal E}[{\mathbb M}_{2}]
\]
over $\cal E$ between the associated over-toposes.

\section{The $\mathbb{T}$-over-topos as a bilimit }

In this section we explore the $2$-dimensional aspects of the construction of the $ \mathbb{T}$-over-topos, exhibiting it as a bilimit in the bicategory of Grothendieck toposes; we shall also discuss its relation with the notion of \emph{totally connected topos}. We will see that, when $M$ is a model of $\mathbb T$ in $\Set$, hence a point $f_{M}:\Set\to \Set[\mathbb{T}]$ of $\Set[\mathbb{T}]$ over $\Set$, this yields a totally connected topos over $\Set$, which coincides with the colocalization of $\Set[\mathbb{T}]$ at $M$ studied in \cite{elephant}[Theorem C3.6.19]; indeed, in this case the over-topos $u_{M}:\Set[M] \to \Set$ admits a $\Set$-point $s_{M}:\Set\to \Set[M]$ providing a factorization of $f_{M}:\Set\to \Set[\mathbb{T}]$ as $s_{M}:\Set\to \Set[M]$ followed by the geometric morphism $f_{M}\circ u_{M}:\Set[M]\to \Set[{\mathbb T}]$, which satisfies the universal property of the colocalization of $\Set[\mathbb{T}]$ at $M$. Note, however, that, for a section $s:{\cal S}\to {\cal E}$ of a $\cal S$-topos $p:{\cal E}\to {\cal S}$, the colocalization of $\cal E$ at $s$, as defined in \cite{elephant} differs in general from the over-topos of ${\cal S}$ at $s$, since the universal property of the former provides an equivalence between $\textup{\bf Geom}_{\cal S}({\cal F}, u_{s})$ and $\textup{\bf Geom}_{\cal S}({\cal F}, {\cal E})\slash (s\circ g)$ (where $\cal E$ is regarded as a $\cal S$-topos via $p$), while the universal property of the latter provides and equivalence between $\textup{\bf Geom}_{\cal S}({\cal F}, u_{s})$ and $\textup{\bf Geom}({\cal F}, {\cal E})\slash (s\circ g)$. 

Indeed, we can geometrically define the \emph{over-topos of $\cal F$ at a geometric morphism $f:{\cal F}\to {\cal E}$} as a $\cal F$-topos $u_{f}:{\cal F}[f]\to {\cal F}$ with a morphism $\xi_{f}:{\cal F}[f]\to {\cal E}$ satisfying the following universal property: for any $\cal F$-topos $g:{\cal G}\to {\cal F}$, there is an equivalence  
	\[
	\textup{\bf Geom}_{\cal F}({\cal F}, u_{f})\simeq \textup{\bf Geom}({\cal F}, {\cal E})\slash (f\circ g)
	\]
	natural in $g$, induced by composition with $\xi_{f}$. 

Note that the condition $p\circ s=1$ is not sufficient in general to ensure that $\xi_{s}$ is a morphism over $\cal S$ (i.e., that $p\alpha_{s}:  p\circ \xi_{s}\to p\circ s\circ u_{s}\cong u_{s}$, where $\alpha_{s}:\xi_{s}\to s\circ u_{s}$ is the canonical geometric transformation, is an isomorphism), which explains why the colocalization of $\cal E$ at $s$ is not in general equivalent to the over-topos of ${\cal S}$ at $s$. Of course, the above definition can be readily generalized to yield a notion of over-topos relative to an arbitrary base topos; if we do so and regard $s$ as a morphism of $\cal S$-toposes then the $\cal S$-over-topos of ${\cal S}$ at $s$ coincides precisely with the colocalization of $\cal E$ at $s$.

Next, we recall some classical notions about bilimits in the bicategory of Grothendieck toposes; the reader may refer to \cite{elephant}[B4.1] and \cite{Zoghaib} for more details.

\begin{definition}
	Let $ \mathcal{K}$ be a bicategory and $ f: X \rightarrow Z$, $g : Y \rightarrow Z$ two 1-cells with common codomain. \begin{itemize}
		\item  A \emph{bipullback} of $ f, \,g$ is an invertible 2-cell
		\[
		\begin{tikzcd}
		X \times^{f,g}_Z Y \arrow[d, "f^*g"'] \arrow[r, "g^*f"] \arrow[rd, "\alpha \atop \simeq", phantom] & Y \arrow[d, "g"] \\
		X \arrow[r, "f"']                                                                            & Z               
		\end{tikzcd} \]
		such that for any $0$-cell $W$ and any invertible 2-cell 
		\[
		\begin{tikzcd}
		W \arrow[r, "t"] \arrow[d, "s"'] \arrow[rd, "\beta \atop \simeq", phantom] & Y \arrow[d, "g"] \\
		X \arrow[r, "f"']                                                          & Z               
		\end{tikzcd}\]
		there is an arrow $ u_\beta :  W \rightarrow X \times_Z Y$, unique up to unique invertible 2-cell, and a pair of invertible 2-cells $ \alpha_s, \, \alpha_t $ such that:  
		\[  
		\begin{tikzcd}
		W \arrow[r, "t"] \arrow[d, "s"'] \arrow[rd, "\beta \atop \simeq", phantom] & Y \arrow[d, "g"] \\
		X \arrow[r, "f"']                                                          & Z               
		\end{tikzcd} =
		\begin{tikzcd}
		W \arrow[rrd, "t", bend left=20, ""{name=Ut, inner sep=0.1pt, below}] \arrow[rdd, "s"', bend right=20, ""{name=Us, inner sep=0.1pt}] \arrow[rd, "{u_\beta}" description, dashed, ""{name= Dt}, ""{name=Ds, below}] &                                                                                              &                  \\ \arrow[from = Ut, to=Dt, phantom]{}{\alpha_t \atop \simeq}
		\arrow[from = Us, to=Ds, phantom]{}{\alpha_s \atop \simeq}
		& X \times^{f,g}_Z Y \arrow[d, "f^*g"'] \arrow[r, "g^*f"] \arrow[rd, "\alpha \atop \simeq", phantom] & Y \arrow[d, "g"] \\
		& X \arrow[r, "f"']                                                                            & Z               
		\end{tikzcd} \]
		In other words, the bipullback induces for each $ W$ an equivalence in $\textup{Cat}$
		\[ \mathcal{K}[ W, X \times^{f,g}_Z Y] \simeq \mathcal{K}[W,X] \times^{ \mathcal{K}[W,f], \mathcal{K}[W,g]}_{\mathcal{K}[W,Z]} \mathcal{K}[W,Y]. \]
		In the sequel, the universal 2-cell in the bipullback will be considered implicit, and we shall use the same notation as for a pullback in a 1-category.
		\item A \emph{bicomma object} is a 2-cell
		\[ 
		\begin{tikzcd}
		(f \downarrow g) \arrow[r, "p_g"] \arrow[d, "p_f"'] & Y \arrow[d, "g", ""{name=U, inner sep=0.1pt, below}] \\
		X \arrow[r, "f"', ""{name=D, inner sep=0.1pt}]  \arrow[from= D, to=U, Rightarrow, bend left=20]{}{\lambda_{f,g}}                               & Z               
		\end{tikzcd} \]
		such that for any other 2-cell
		\[ \begin{tikzcd}
		W \arrow[r, "t"] \arrow[d, "s"']  & Y \arrow[d, "g", ""{name=U, inner sep=0.1pt, below}] \\
		X \arrow[r, "f"', ""{name=D, inner sep=0.1pt}]          \arrow[from= D, to=U, Rightarrow, bend left=20]{}{\mu}                                                   & Z               
		\end{tikzcd} \]
		there exists a 1-cell $ u_\mu : W \rightarrow (f \downarrow g)$, unique up to a unique invertible 2-cell, and a pair of invertible 2-cells $ \alpha_s, \alpha_t$ such that: 
		\[ \begin{tikzcd}
		W \arrow[r, "t"] \arrow[d, "s"']  & Y \arrow[d, "g", ""{name=U, inner sep=0.1pt, below}] \\
		X \arrow[r, "f"', ""{name=D, inner sep=0.1pt}]          \arrow[from= D, to=U, Rightarrow, bend left=20]{}{\mu}                                                   & Z               
		\end{tikzcd} =  \begin{tikzcd}
		W \arrow[rrd, "t", bend left=20, ""{name=Ut, inner sep=0.1pt, below}] \arrow[rdd, "s"', bend right=20, ""{name=Us, inner sep=0.1pt}] \arrow[rd, "{u_\mu}" description, dashed, ""{name= Dt}, ""{name=Ds, below}] &                                                                                              &                  \\ \arrow[from = Ut, to=Dt, phantom]{}{\alpha_t \atop \simeq}
		\arrow[from = Us, to=Ds, phantom]{}{\alpha_s \atop \simeq}
		& X \times^{f,g}_Z Y \arrow[d, "f^*g"'] \arrow[r, "g^*f"]  & Y \arrow[d, "g", ""{name=U, inner sep=0.1pt, below}] \\
		& X \arrow[r, "f"', ""{name=D, inner sep=0.1pt}]             \arrow[from= D, to=U, Rightarrow, bend left=20]{}{\lambda_{f,g}}                                                                     & Z 
		\end{tikzcd}                                \]
		In other words, we have an equivalence in $\textup{Cat}$
		\[ \mathcal{K}[ W, (f \downarrow g)] \simeq  \mathcal{K}[W,f] \downarrow \mathcal{K}[W,g].\]
	\end{itemize}
\end{definition}

\begin{definition}
	Let $\mathcal{K}$ be a bicategory and $X$ a 0-cell. Then a \emph{bipower with 2 of $X$} is a 2-cell
	\[ 
	\begin{tikzcd}
	X^2 \arrow[rr, "\partial_0", bend left, ""{name=U, inner sep=0.1pt, below}] \arrow[rr, "\partial_1"', bend right, ""{name=D, inner sep=0.1pt}] &  & X \arrow[from= U, to=D, Rightarrow]{}{\lambda_X}  
	\end{tikzcd} \]
	where $ \partial_0$ and $ \partial_1$ are called respectively \emph{the universal domain} and \emph{universal codomain} of $X$, such that for any 2-cell 
	\[ \begin{tikzcd}
	Y \arrow[rr, "f", bend left, ""{name=U, inner sep=0.1pt, below}] \arrow[rr, "g"', bend right, ""{name=D, inner sep=0.1pt}] &  & X \arrow[from= U, to=D, Rightarrow]{}{\mu}  
	\end{tikzcd} \]
	there exists a 1-cell $ u_\mu : W \rightarrow X^2$, unique up to a unique invertible 2-cell, and a pair of invertible 2-cells $\alpha_f, \alpha_g$ such that:
	\[ \begin{tikzcd}
	Y \arrow[rr, "f", bend left, ""{name=U, inner sep=0.1pt, below}] \arrow[rr, "g"', bend right, ""{name=D, inner sep=0.1pt}] &  & X \arrow[from= U, to=D, Rightarrow]{}{\mu}  
	\end{tikzcd} = 
	\begin{tikzcd}
	Y \arrow[rrr, "f", bend left=49,""{name=Uf, inner sep=0.1pt, below}] \arrow[rrr, "g"', bend right=49, ""{name=Ug, inner sep=0.1pt}] \arrow[r, "u_\mu", dashed, ""{name=Df, inner sep=0.1pt, below}, ""{name=Dg, inner sep=0.1pt}] & X^2 \arrow[rr, "\partial_0", bend left, ""{name=U, inner sep=0.1pt, below}] \arrow[rr, "\partial_1"', bend right, ""{name=D, inner sep=0.1pt}] &  & X  \arrow[from = Uf, to=Df, phantom]{}{\alpha_f \atop \simeq} \arrow[from = Ug, to=Dg, phantom]{}{\alpha_g \atop \simeq} \arrow[from= U, to=D, Rightarrow]{}{\lambda_X}
	\end{tikzcd} \]
	In other words, we have an equivalence of categories 
	\[ \textup{Cat}[2, \mathcal{K}[Y,X]] \simeq \mathcal{K}[Y, X^2]. \]
\end{definition}

\begin{remark}
	Bipullbacks, bicommas and bipowers are related as follows: the bipower of $ X$ is the bicomma of the identity 1-cell with itself
	\[ \begin{tikzcd}
	X^2 \arrow[r, "\partial_0"] \arrow[d, " \partial_1 "'] & X \arrow[equal, d, "", ""{name=U, inner sep=0.1pt, below}] \\
	X \arrow[equal, r, ""', ""{name=D, inner sep=0.1pt}]  \arrow[from= D, to=U, Rightarrow, bend left=20]{}{\lambda_{X}}                               & X     \end{tikzcd} \]
	and the bicomma of two arrows $ f,g$ can be constructed from bipowers and pullbacks as:
	\[ 
	\begin{tikzcd}
	(f\downarrow g) \arrow[r] \arrow[d] \arrow[rd, "\lrcorner", near start, phantom] & {Z \times^{g,\partial_0}_X X^2} \arrow[d] \arrow[r] \arrow[rd, "\lrcorner", near start, phantom] & Z \arrow[d] \\
	{Y \times^{f,\partial_1}_X X^2} \arrow[d] \arrow[r] \arrow[rd, "\lrcorner", near start, phantom] & {X^2}        \arrow[r, "\partial_0"] \arrow[d, " \partial_1 "']                                               & X \arrow[equal, d, "", ""{name=U, inner sep=0.1pt, below}]          \\
	Y \arrow[r]                                                        & X \arrow[equal, r, ""', ""{name=D, inner sep=0.1pt}]  \arrow[from= D, to=U, Rightarrow, bend left=20]{}{\lambda_{X}}                                                                &           X 
	\end{tikzcd} \]
\end{remark}
\begin{remark}
	Whenever they exist, bilimits are unique up to unique equivalence in $\mathcal{K}$. 
\end{remark}

There are other shapes of finite bilimits that can be defined, and that exist in the bicategory of Grothendieck toposes, but we only need the above ones in this paper. The following fact is standard, and its proof can be found either in \cite{elephant} or in  \cite{Zoghaib}:

\begin{proposition}
	The bicategory of Grothendieck toposes is finitely bicomplete; in particular it has bipullbacks, bicommas and bipowers with $2$. 
\end{proposition}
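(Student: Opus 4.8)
The plan is to reduce the statement to the construction of three basic kinds of bilimit and then build each of them in the bicategory $\GTop$ of Grothendieck toposes (with geometric morphisms as $1$-cells and geometric transformations as $2$-cells). Recall from the general theory of finite weighted bilimits (see \cite{elephant}[B4.1] and \cite{Zoghaib}) that a bicategory is finitely bicomplete as soon as it has a biterminal object, bipullbacks, and bipowers with $2$ (i.e.\ bicotensors with the arrow category $\mathbf 2$): every finite conical bilimit is built from the biterminal object and bipullbacks, every finite bicotensor is built from bipowers with $2$ together with finite conical bilimits, and these generate all finite bilimits. In particular the bicomma objects are obtained, in any bicategory carrying those limits, from two bipullbacks and one bipower with $2$, exactly as in the construction displayed in the Remark above. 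So it suffices to exhibit a biterminal object, bipullbacks, and bipowers with $2$ in $\GTop$. For the biterminal object the topos $\Set$ works: for every Grothendieck topos $\mathcal E$ the category $\textup{\bf Geom}[\mathcal E,\Set]$ is equivalent to the terminal category, since an inverse image functor $\Set\to\mathcal E$ preserving finite limits and all colimits is forced to be the constant-sheaf functor up to unique isomorphism (using that $\Set$ is generated by $1$ under colimits), and the only geometric transformation between two such is the identity.

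For bipullbacks, given geometric morphisms $f:\mathcal X\to\mathcal Z$ and $g:\mathcal Y\to\mathcal Z$, one constructs the bipullback topos $\mathcal X\times_{\mathcal Z}\mathcal Y$ by the usual site-theoretic recipe: choosing sites of definition for $\mathcal X$, $\mathcal Y$, $\mathcal Z$ and forming an appropriate ``tensor'' of them along $f$ and $g$. Since every Grothendieck topos is bounded over $\Set$, this is an instance of the existence of pullbacks of bounded geometric morphisms over an arbitrary base established in \cite{elephant}[B3.4.1]. The required universal property
\[
\textup{\bf Geom}[\mathcal W,\mathcal X\times_{\mathcal Z}\mathcal Y]\;\simeq\;\textup{\bf Geom}[\mathcal W,\mathcal X]\times^{\textup{\bf Geom}[\mathcal W,f],\,\textup{\bf Geom}[\mathcal W,g]}_{\textup{\bf Geom}[\mathcal W,\mathcal Z]}\textup{\bf Geom}[\mathcal W,\mathcal Y],
\]
natural in $\mathcal W$, is then verified by descent. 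This descent-theoretic verification --- that the constructed topos genuinely classifies the pseudo-pullback of the hom-categories, coherently in $\mathcal W$ --- is the step I expect to be the main obstacle; the rest is formal.

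For bipowers with $2$, realise a given Grothendieck topos $\mathcal E$ as a classifying topos: there is a geometric theory $\mathbb T$ (for instance the theory of flat $J$-continuous functors on a site $(\mathcal C,J)$ of definition for $\mathcal E$) together with a natural equivalence $\mathbb T[\mathcal G]\simeq\textup{\bf Geom}[\mathcal G,\mathcal E]$ for all $\mathcal G$. Let $\mathbb T^{\to}$ be the theory of homomorphisms of $\mathbb T$-models: its signature consists of two disjoint copies of the signature of $\mathbb T$ together with a function symbol between the two copies for each sort, and its axioms assert that each copy is a model of $\mathbb T$ and that the new function symbols constitute a $\mathbb T$-model homomorphism; this theory is visibly geometric, so $\mathcal E^{\mathbf 2}:=\Set[\mathbb T^{\to}]$ is a Grothendieck topos. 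The two inclusions of the signature of $\mathbb T$ into that of $\mathbb T^{\to}$ induce geometric morphisms $\partial_0,\partial_1:\mathcal E^{\mathbf 2}\to\mathcal E$, and the homomorphism function symbols induce a geometric transformation $\lambda_{\mathcal E}:\partial_0\Rightarrow\partial_1$. For any Grothendieck topos $\mathcal W$ we then have natural equivalences
\[
\textup{\bf Geom}[\mathcal W,\mathcal E^{\mathbf 2}]\simeq\mathbb T^{\to}[\mathcal W]\simeq\textup{Cat}[\mathbf 2,\mathbb T[\mathcal W]]\simeq\textup{Cat}[\mathbf 2,\textup{\bf Geom}[\mathcal W,\mathcal E]],
\]
which is precisely the universal property of the bipower with $2$, with $\partial_0,\partial_1$ as universal domain and codomain and $\lambda_{\mathcal E}$ as universal $2$-cell. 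Combined with the reduction of the first paragraph (bicommas being assembled from bipullbacks and bipowers with $2$ as in the Remark above), this shows that $\GTop$ is finitely bicomplete and, in particular, has bipullbacks, bicommas and bipowers with $2$.
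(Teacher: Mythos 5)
The paper does not actually prove this proposition: it is quoted as a standard fact with a pointer to \cite{elephant}[B4.1] and \cite{Zoghaib}, so there is no internal argument to compare yours against step by step. Your sketch is, in substance, the standard proof contained in those references, and it is essentially correct. The reduction of finite bicompleteness to a biterminal object, bipullbacks and bipowers with $2$ is the usual generation result (finite conical bilimits from biterminal plus bipullbacks, cotensors with finite categories from cotensors with $2$ plus conical bilimits), and your bicomma construction is exactly the paper's own Remark. Your biterminal argument for $\Set$ is fine, and your construction of the bipower with $2$ as the classifying topos of the (visibly geometric) theory of $\mathbb{T}$-model homomorphisms is precisely the standard one; note that it is also consonant with the paper's later Proposition \ref{overtopos at the universal domain}, which identifies $\Set[\mathbb{T}]^2$ with the over-topos $\Set[\mathbb{T}][U_{\mathbb{T}}]$. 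The only point to tidy up is the bipullback step: you defer to Johnstone for the existence of pullbacks of bounded geometric morphisms (the relevant statement is B3.3.6 rather than B3.4.1) and then flag a remaining ``descent-theoretic verification'' as the main obstacle, but this worry is misplaced in two ways. First, the cited theorem already establishes the full bicategorical (pseudo-)universal property, naturally in the test topos, so quoting it leaves nothing further to check; second, the verification it contains proceeds via internal sites and (internal) Diaconescu-type equivalences rather than by descent, so if you wanted a self-contained argument that is the route to take. With that citation corrected and the hedge removed, your proposal is a complete and faithful rendering of the standard proof that the paper itself simply outsources.
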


\begin{proposition}\label{overtopos at the universal domain}
	The universal codomain $ \partial_1 : \Set[\mathbb{T}]^2 \rightarrow \Set[\mathbb{T}]$ is the over-topos of $ \Set[\mathbb{T}]$ at the universal domain $ U_\mathbb{T}$, that is, we have a geometric equivalence 
	\[ \Set[\mathbb{T}]^2 \simeq \Set[\mathbb{T}][U_\mathbb{T}] \]
	and an invertible 2-cell:
	\[ 
	\begin{tikzcd}
	{\Set[\mathbb{T}]^2} \arrow[rd, "\partial_1"'] \arrow[rr, "\simeq"] &  \arrow[d, "\simeq", phantom] & {\Set[\mathbb{T}][U_\mathbb{T}]} \arrow[ld, "u_{U_\mathbb{T}}"] \\
	& {\Set[\mathbb{T}]}           &                                                                       
	\end{tikzcd} \]
\end{proposition}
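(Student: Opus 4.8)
The plan is to identify the two toposes by comparing universal properties. Since an over-topos is determined up to canonical equivalence over its base by the corepresentability property recorded in Theorem~\ref{thmovertopos}, it suffices to prove that the $\Set[\mathbb{T}]$-topos $\partial_1:\Set[\mathbb{T}]^2\to \Set[\mathbb{T}]$ corepresents the same pseudofunctor as $u_{U_\mathbb{T}}:\Set[\mathbb{T}][U_\mathbb{T}]\to\Set[\mathbb{T}]$, where $U_\mathbb{T}$ denotes the universal model of $\mathbb{T}$ inside its classifying topos. The resulting equivalence $\Set[\mathbb{T}]^2\simeq\Set[\mathbb{T}][U_\mathbb{T}]$ will then automatically commute with the two structure morphisms to $\Set[\mathbb{T}]$ up to a canonical invertible $2$-cell, which is precisely the content of the statement.

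So fix an arbitrary $\Set[\mathbb{T}]$-topos $g:\mathcal{G}\to\Set[\mathbb{T}]$; I must exhibit an equivalence $\textup{\bf Geom}_{\Set[\mathbb{T}]}[g,\partial_1]\simeq \mathbb{T}[\mathcal{G}]/g^{\ast}U_\mathbb{T}$, natural in $g$. By the defining universal property of the bipower $\Set[\mathbb{T}]^2$ one has $\textup{\bf Geom}[\mathcal{G},\Set[\mathbb{T}]^2]\simeq \textup{Cat}[\mathbf{2},\textup{\bf Geom}[\mathcal{G},\Set[\mathbb{T}]]]$, and composing this with the universal property of the classifying topos, $\textup{\bf Geom}[\mathcal{G},\Set[\mathbb{T}]]\simeq \mathbb{T}[\mathcal{G}]$ (under which $g$ corresponds to the model $g^{\ast}U_\mathbb{T}$), we obtain an equivalence $\textup{\bf Geom}[\mathcal{G},\Set[\mathbb{T}]^2]\simeq \mathbb{T}[\mathcal{G}]^{\mathbf{2}}$ with the arrow category of $\mathbb{T}[\mathcal{G}]$, natural in $\mathcal{G}$. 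Reading the universal $2$-cell $\lambda:\partial_0\Rightarrow\partial_1$ of the bipower through this equivalence identifies post-composition with $\partial_0$, respectively $\partial_1$, with the domain, respectively codomain, functor $\mathbb{T}[\mathcal{G}]^{\mathbf{2}}\to\mathbb{T}[\mathcal{G}]$. Now an object of $\textup{\bf Geom}_{\Set[\mathbb{T}]}[g,\partial_1]$ is a pair $(f,\phi)$ with $f:\mathcal{G}\to\Set[\mathbb{T}]^2$ a geometric morphism and $\phi:\partial_1\circ f\Rightarrow g$ an invertible $2$-cell; translating along the equivalences above, this is exactly a morphism $h:N\to N'$ in $\mathbb{T}[\mathcal{G}]$ together with an isomorphism $\gamma:N'\cong g^{\ast}U_\mathbb{T}$, the morphisms of such data being the evident commuting squares. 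In other words, $\textup{\bf Geom}_{\Set[\mathbb{T}]}[g,\partial_1]$ is the iso-comma category of the codomain functor $\textup{cod}:\mathbb{T}[\mathcal{G}]^{\mathbf{2}}\to\mathbb{T}[\mathcal{G}]$ over the object $g^{\ast}U_\mathbb{T}$, and this is canonically equivalent to the ordinary slice $\mathbb{T}[\mathcal{G}]/g^{\ast}U_\mathbb{T}$ (via $(h,\gamma)\mapsto \gamma\circ h$, with quasi-inverse $u\mapsto(u,1)$), by the elementary fact that replacing a pinned target of a slice category by a target merely isomorphic to it changes the category only up to equivalence. Since every equivalence used is natural in $g$, the $\Set[\mathbb{T}]$-topos $\partial_1$ corepresents $g\mapsto \mathbb{T}[\mathcal{G}]/g^{\ast}U_\mathbb{T}$, as required.

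The only point requiring genuine care is the identification, in the middle of the argument, of post-composition with $\partial_1$ with the codomain functor: this amounts to unwinding the bipower universal property (whiskering $\lambda:\partial_0\Rightarrow\partial_1$ with $f:\mathcal{G}\to\Set[\mathbb{T}]^2$ produces an arrow of $\mathbb{T}[\mathcal{G}]$ with source $\partial_0\circ f$ and target $\partial_1\circ f$), together with checking that the $2$-cell conventions in the definition of the pseudo-slice $\textup{\bf Geom}_{\Set[\mathbb{T}]}[-,-]$ match those used in Theorem~\ref{thmovertopos}, and that the naturality squares (functoriality in $g$ on one side, compatibility of $\textup{cod}$ with base change of $\mathbb{T}$-models along $g^{\ast}$ on the other) correspond; everything else is formal. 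One could alternatively argue at the level of sites, by identifying the site $(\int\mathbb{M},J^{\textup{ant}}_{U_\mathbb{T}})$ presenting $\Set[\mathbb{T}][U_\mathbb{T}]$ with a site presenting $\Set[\mathbb{T}]^2$, but the $2$-categorical argument above is cleaner and bypasses those computations.
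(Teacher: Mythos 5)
Your proof is correct and follows essentially the same route as the paper's: both arguments verify that $\partial_1$ satisfies the universal property of the over-topos at $U_{\mathbb T}$ by unwinding the bipower's universal $2$-cell together with the classifying-topos equivalence, identifying an object of $\textup{\bf Geom}_{\Set[\mathbb T]}[g,\partial_1]$ with a $\mathbb T$-model homomorphism into $g^{\ast}U_{\mathbb T}$, and then invoking the essential uniqueness of the over-topos. Your write-up merely spells out in more detail the steps (identification of $\partial_1$ with the codomain functor, iso-comma versus strict slice, naturality in $g$) that the paper treats tersely.
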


\begin{proof}
	This is a consequence of the universal property of the $ \mathbb{T}$-over-topos and the way in which a $\mathbb T$-model $M$ in $\cal E$ corresponds to a geometric morphism $g_{M}:{\cal E}\to \Set[{\mathbb T}]$ by the universal property of classifying toposes. Indeed, any object of $ \textup{\bf Geom}_\mathcal{E}[ g_{M}, \partial_1]$ is an invertible 2-cell 
	\[ 
	\begin{tikzcd}
	\mathcal{E} \arrow[rr, "f"] \arrow[rd, "g_{M}"'] &         \arrow[d, phantom, "\simeq" description]                  & {\Set[\mathbb{T}]^2} \arrow[ld, "\partial_1"] \\
	& {\Set[\mathbb{T}]} &                                                     
	\end{tikzcd} \]
	which produces a $2$-cell:
	\[ \begin{tikzcd}[column sep=large]
	\mathcal{E} \arrow[rr, "\partial_0^*f", bend left=25, ""{name=U, inner sep=0.1pt, below}, end anchor=160] \arrow[rr, "\partial_1^*f \simeq g_{M}"', bend right=25, ""{name=D, inner sep=0.1pt}, end anchor=200] &  & \Set[\mathbb{T}] \arrow[from= U, to=D, Rightarrow]{}{\lambda_{\Set[\mathbb{T}]}^*f}  
	\end{tikzcd} \]
	But this is exactly the same as a morphism of $\mathbb{T}$-models  $f : N \rightarrow M $ in $\mathcal{E}$. 
\end{proof}

\begin{corollary} For any geometric theory $\mathbb T$, $(\mathcal{C}_\mathbb{T}^2, J_{U_\mathbb{T}})$ is a small, cartesian, subcanonical site for $ \Set[\mathbb{T}]^2$. 
\end{corollary}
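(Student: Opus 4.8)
The plan is to read the statement off the general over-topos construction, specialised to the universal model $U_{\mathbb T}$ of $\mathbb T$ inside its classifying topos $\Set[\mathbb T]$. First I would take the small cartesian subcanonical site of definition for $\Set[\mathbb T]$ required by Definition \ref{overtopos} to be the geometric syntactic site $(\mathcal{C}_\mathbb{T}, J_\mathbb{T})$ itself; this is legitimate, since $(\mathcal{C}_\mathbb{T}, J_\mathbb{T})$ is well known to be small, cartesian and subcanonical. For the universal model the interpretation functor $F_{U_{\mathbb T}}:\mathcal{C}_\mathbb{T}\to\Set[\mathbb T]$ is (naturally isomorphic to) the canonical functor $\mathcal{C}_\mathbb{T}\to\Sh(\mathcal{C}_\mathbb{T}, J_\mathbb{T})$, which coincides with the separating embedding $i_{\mathcal C_{\mathbb T}}$ and, by subcanonicity of $J_{\mathbb T}$, is full and faithful. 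Hence $\mathcal{C}_\mathbb{T}^2:=\int\mathbb{M}=(i_{\mathcal C_{\mathbb T}}\downarrow F_{U_{\mathbb T}})$ is, up to equivalence, the arrow category of $\mathcal{C}_\mathbb{T}$ (objects the arrows of $\mathcal{C}_\mathbb{T}$, morphisms the commuting squares), which explains the notation, and $J_{U_{\mathbb T}}:=J^{\textup{ant}}_{U_{\mathbb T}}$ is its antecedent topology. Then Definition \ref{overtopos} gives $\Set[\mathbb T][U_{\mathbb T}]=\Sh(\mathcal{C}_\mathbb{T}^2, J_{U_{\mathbb T}})$, and the preceding Proposition identifies the left-hand side with $\Set[\mathbb T]^2$; so $(\mathcal{C}_\mathbb{T}^2, J_{U_{\mathbb T}})$ is a site for $\Set[\mathbb T]^2$, and only the three adjectives remain to be verified.

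Smallness is immediate, $\mathcal{C}_\mathbb{T}$ being small and $\Set[\mathbb T]$ locally small, so that $(i_{\mathcal C_{\mathbb T}}\downarrow F_{U_{\mathbb T}})$ is small. Cartesianness follows from Proposition \ref{propcartesianstructure}: $\mathbb M$ is a cartesian stack on $(\mathcal{C}_\mathbb{T}, J_\mathbb{T})$ (by the proposition asserting exactly this, stated right after the introduction of $\mathbb M$ in Section~3) and $\mathcal{C}_\mathbb{T}$ is cartesian, hence $\int\mathbb{M}$ is a cartesian category.

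The substantive point — and the place where I expect the real work — is the subcanonicity of $J_{U_{\mathbb T}}$. I would establish it exactly as for set-based models (proof of the main Theorem of Section~1): by construction $J^{\textup{ant}}_{U_{\mathbb T}}$ is the topology induced by the $(f_{U_{\mathbb T}}, J_{\mathbb T})$-lifted topology on the full, $L_{(f_{U_{\mathbb T}}, J_{\mathbb T})}$-dense subcategory $(i_{\mathcal C_{\mathbb T}}\downarrow F_{U_{\mathbb T}})\hookrightarrow(1_{\Set[\mathbb T]}\downarrow F_{U_{\mathbb T}})$ (Corollary \ref{corantecedenttopology}(ii)); since a topology induced by a subcanonical topology on a full subcategory is again subcanonical, it suffices to see that $J_{U_{\mathbb T}}$ coincides with the topology induced on $\mathcal{C}_\mathbb{T}^2$, via a full and faithful cover-preserving functor, by the (subcanonical) geometric syntactic topology of the theory $\mathbb T'$ of homomorphisms of $\mathbb T$-models, whose classifying topos is precisely $\Set[\mathbb T]^2$. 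This is the direct generalisation to the universal model of the explicit site-level computations of Section~1, with the evident functor $\mathcal{C}_\mathbb{T}^2\to\mathcal{C}_{\mathbb T'}$ playing the role of the functor $V$ there, invoking Proposition~5.3 of \cite{OCdenseness} as in that argument; matching the antecedent covers with the induced $\mathbb T'$-covers is the only calculation of real substance, everything else being a formal assembly of Definition \ref{overtopos}, the preceding Proposition and Proposition \ref{propcartesianstructure}.
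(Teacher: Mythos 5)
Your identification of the site coincides with the paper's own argument: specialize the general construction (Definition \ref{overtopos}, Theorem \ref{thmovertopos}) to ${\cal E}=\Sh({\cal C}_{\mathbb T},J_{\mathbb T})$, $M=U_{\mathbb T}$ and $({\cal C},J)=({\cal C}_{\mathbb T},J_{\mathbb T})$, observe that both $i_{\cal C}$ and $F_{U_{\mathbb T}}$ are the Yoneda embedding $\y_{\mathbb T}$, so that $\int{\mathbb M}=(\y_{\mathbb T}\downarrow \y_{\mathbb T})\simeq {\cal C}_{\mathbb T}^2$, and conclude via the equivalence $\Set[{\mathbb T}]^2\simeq \Set[{\mathbb T}][U_{\mathbb T}]$ of the Proposition immediately preceding the Corollary; smallness and cartesianness (the latter by Proposition \ref{propcartesianstructure}) are likewise read off the specialization, which is all the paper does.

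The divergence, and the gap, lies in your treatment of subcanonicity. You single it out as "the place where the real work is", and then you only give a plan: introduce a geometric theory ${\mathbb T}'$ of ${\mathbb T}$-model homomorphisms classified by $\Set[{\mathbb T}]^2$, build a full and faithful cover-preserving functor ${\cal C}_{\mathbb T}^2\to {\cal C}_{{\mathbb T}'}$ playing the role of the functor $V$ of Section 1, and show that $J_{U_{\mathbb T}}$ is the induced topology --- while explicitly deferring the one step that carries the content (matching the antecedent covers with the induced covers), and in fact also the definitions of ${\mathbb T}'$ and of the comparison functor. This is not a "direct generalisation" of Section 1: there the signature of ${\mathbb T}_M$ has a sort for each \emph{object of $\int M$}, i.e.\ for each global element, and the proof of the Theorem of Section 1 leans on $\Set$-specific facts (interpretations decompose as coproducts of fibers over global elements; $1$ is indecomposable and projective) which are unavailable over $\Set[{\mathbb T}]$. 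The natural analogue for $U_{\mathbb T}$ would require a sort for every \emph{arrow} of ${\cal C}_{\mathbb T}$, and verifying that such a theory is classified by $\Set[{\mathbb T}]^2$, that the comparison functor from ${\cal C}_{\mathbb T}^2$ is full, faithful and dense, and that it induces precisely $J_{U_{\mathbb T}}$, is essentially the corollary-plus-subcanonicity all over again; the usual doubled-signature theory of ${\mathbb T}$-model homomorphisms admits no evident full and faithful functor from ${\cal C}_{\mathbb T}^2$ at all. So the one claim you yourself identify as substantive is left unproven, and the route you propose for it is both heavier than, and foreign to, the paper's proof, which obtains the whole statement by specializing the general over-topos construction (and gives no separate subcanonicity argument).
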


\begin{proof}
	It suffices to apply the construction of the $\mathbb T$-over-topos provided by Theorem \ref{thmovertopos} in the particular case ${\cal E}=\Sh({\cal C}_{\mathbb T}, J_{\mathbb T})$, $M=U_{\mathbb T}$ and $({\cal C}, J)=({\cal C}_{\mathbb T}, J_{\mathbb T})$. Note that $i_{\cal C}$ is simply the Yoneda embedding $y_{\mathbb T}:{\cal C}_{\mathbb T}\hookrightarrow \Sh({\cal C}_{\mathbb T}, J_{\mathbb T})$, and also $F_{M}$ coincides with $y_{\mathbb T}$, whence the site for $ \Set[\mathbb{T}][U_\mathbb{T}]$ provided by Definition \ref{overtopos} is $(\y_{\mathbb T} \downarrow \y_{\mathbb T}) \simeq \mathcal{C}_\mathbb{T}^2$, that is, the arrow category of $\mathcal{C}_\mathbb{T}$.
\end{proof}

\begin{proposition}\label{propovertoposcomma}
	Let $ \mathcal{E}$ be a Grothendieck topos and $ M$ in $ \mathbb{T}[\mathcal{E}]$, with $ \mathcal{E}[M]$ the associated $\mathbb{T}$-over-topos. Then the following square
	\[ \begin{tikzcd}
	\mathcal{E}[M] \arrow[]{r}{}  \arrow[]{d}[swap]{u_{M}} & \Set[\mathbb{T}]^2 \arrow[]{d}{\partial_1} \\ \mathcal{E} \arrow[]{r}{g_{M}} & \Set[\mathbb{T}]
	\end{tikzcd}\] 
	is a bipullback, where the top morphism is the name of the universal geometric transformation from $g_{M}\circ u_{M}$ to the canonical morphism ${\cal E}[M]\to \Set[{\mathbb T}]$.
\end{proposition}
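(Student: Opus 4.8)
The plan is to verify the defining universal property of a bipullback directly, by testing against an arbitrary Grothendieck topos $\mathcal{W}$: I will show that composition with the four edges of the square induces an equivalence of categories
\[
\textup{\bf Geom}[\mathcal{W}, \mathcal{E}[M]] \;\simeq\; \textup{\bf Geom}[\mathcal{W}, \mathcal{E}] \times_{\textup{\bf Geom}[\mathcal{W}, \Set[\mathbb{T}]]} \textup{\bf Geom}[\mathcal{W}, \Set[\mathbb{T}]^{2}]
\]
(the bipullback on the right being that of $g_{M}\circ(-)$ and $\partial_{1}\circ(-)$), pseudonatural in $\mathcal{W}$. The ingredients are: the universal property of the classifying topos, $\textup{\bf Geom}[\mathcal{W}, \Set[\mathbb{T}]]\simeq \mathbb{T}[\mathcal{W}]$; the universal property of the bipower with $2$, by which $\textup{\bf Geom}[\mathcal{W}, \Set[\mathbb{T}]^{2}]$ is the arrow category of $\mathbb{T}[\mathcal{W}]$, with $\partial_{0}$ and $\partial_{1}$ extracting the domain and the codomain of a homomorphism of $\mathbb{T}$-models in $\mathcal{W}$ (equivalently, $\Set[\mathbb{T}]^{2}\simeq \Set[\mathbb{T}][U_{\mathbb{T}}]$ by Proposition \ref{overtopos at the universal domain}); and the universal property of the over-topos from Theorem \ref{thmovertopos}.

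Unwinding the right-hand side: an object there consists of a geometric morphism $g:\mathcal{W}\to \mathcal{E}$, a geometric morphism $h:\mathcal{W}\to \Set[\mathbb{T}]^{2}$, and an isomorphism $g_{M}\circ g \cong \partial_{1}\circ h$. By the universal property of the bipower, $h$ is the same datum as a homomorphism $\psi:P\to Q$ of $\mathbb{T}$-models in $\mathcal{W}$, where $P$ and $Q$ are the models classified by $\partial_{0}\circ h$ and $\partial_{1}\circ h$; since $g_{M}$ classifies $M$, the composite $g_{M}\circ g$ classifies $g^{\ast}M$, so the prescribed isomorphism transports to an isomorphism $Q\cong g^{\ast}M$ in $\mathbb{T}[\mathcal{W}]$. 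Absorbing this isomorphism, the right-hand side is equivalent to the category of pairs $(g:\mathcal{W}\to \mathcal{E},\ \psi:P\to g^{\ast}M)$, that is, to the total category $\int_{g\in \textup{\bf Geom}[\mathcal{W},\mathcal{E}]} \big(\mathbb{T}[\mathcal{W}]/g^{\ast}M\big)$ of the pseudofunctor $g\mapsto \mathbb{T}[\mathcal{W}]/g^{\ast}M$.

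For the left-hand side: a geometric morphism $k:\mathcal{W}\to \mathcal{E}[M]$ is the same datum as the geometric morphism $u_{M}\circ k:\mathcal{W}\to \mathcal{E}$ together with the structure making $k$ a morphism of $\mathcal{E}$-toposes from $u_{M}\circ k$ to $u_{M}$, so $\textup{\bf Geom}[\mathcal{W},\mathcal{E}[M]]\simeq \int_{g}\textup{\bf Geom}_{\mathcal{E}}[g, u_{M}]$; and Theorem \ref{thmovertopos} provides $\textup{\bf Geom}_{\mathcal{E}}[g, u_{M}]\simeq \mathbb{T}[\mathcal{W}]/g^{\ast}M$, naturally in $g$. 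Hence the left-hand side is again $\int_{g}\big(\mathbb{T}[\mathcal{W}]/g^{\ast}M\big)$.

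The point that requires care — and the main obstacle — is to check that chaining the two identifications above yields precisely the functor induced by composing with the edges of the square; the delicate edge is the top one. By construction it is the name in $\Set[\mathbb{T}]^{2}$ of the canonical geometric transformation from $g_{M}\circ u_{M}$ to the canonical morphism $\mathcal{E}[M]\to \Set[\mathbb{T}]$ of Proposition \ref{propcomorphismtosyntacticsite}, i.e. of the universal homomorphism $f^{\mathrm{univ}}\colon N^{\mathrm{univ}}\to u_{M}^{\ast}M$ living on $\mathcal{E}[M]$. Composing it with $k$ gives the name of $k^{\ast}f^{\mathrm{univ}}\colon k^{\ast}N^{\mathrm{univ}}\to (u_{M}k)^{\ast}M$, whose codomain is canonically $g^{\ast}M$ for $g=u_{M}k$, so the square commutes up to the $2$-cell decorating it; and $k^{\ast}f^{\mathrm{univ}}$, regarded as an object of $\mathbb{T}[\mathcal{W}]/g^{\ast}M$, is exactly the object to which $k$ corresponds under Theorem \ref{thmovertopos}, since that equivalence is implemented by pulling the universal object back along $k$. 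Therefore the two descriptions of $\int_{g}\big(\mathbb{T}[\mathcal{W}]/g^{\ast}M\big)$ are matched by the cone, the induced comparison functor is an equivalence, and pseudonaturality in $\mathcal{W}$ is immediate because every identification involved is built from composition of geometric morphisms and from pseudonatural universal properties.
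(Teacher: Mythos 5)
Your proposal is correct and follows essentially the same route as the paper: both arguments unravel the universal properties of the classifying topos, of the bipower $\Set[\mathbb{T}]^2$ (maps into it over $\partial_1\cong g_M\circ g$ being exactly objects of $\mathbb{T}[\mathcal{W}]/g^{\ast}M$), and of the over-topos from Theorem \ref{thmovertopos}. The only difference is one of detail: the paper merely exhibits the factorization $\overline{f}$ and appeals to uniqueness of bipullbacks, whereas you additionally verify that the hom-category equivalence is induced by the actual cone (via pulling back the universal homomorphism), which is a welcome precision but not a different method.
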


\begin{proof}
	This is just unravelling of universal properties. Suppose we have a square: 
	\[ 
	\begin{tikzcd}
	\mathcal{G} \arrow[d, "g"'] \arrow[r, "f"] & {\Set[\mathbb{T}]^2} \arrow[d, "\partial_1"] \\
	\mathcal{E} \arrow[r, "g_{M}"]                 & {\Set[\mathbb{T}]}                          
	\end{tikzcd} \]
	This yields a morphism of $ \mathbb{T}$-models $f $ in $\mathbb{T}[G]$ such that $ \partial_1 f = g_{M}\circ g$, that is, an object of $ \mathbb{T}[\mathcal{G}]/g^*M$ defining a unique morphism $\overline{f}$ as in the following diagram: 
	\[ 
	\begin{tikzcd}
	\mathcal{G} \arrow[rd, "\overline{f}", dashed] \arrow[rdd, "g"', bend right=30] \arrow[rrd, "f", bend left=20] &                                                     &                                                     \\
	& {\mathcal{E}[M]} \arrow[d, "u_M"'] \arrow[r, "1_M"] & {\Set[\mathbb{T}]^2} \arrow[d, "\partial_1"] \\
	& \mathcal{E} \arrow[r, "g_{M}"']                         & {\Set[\mathbb{T}]}                          
	\end{tikzcd} \]
	Hence by the uniqueness up to unique equivalence of the bipullback, the $\mathbb{T}$-over-topos coincides with the bipullback.
\end{proof}

\begin{remark}
	Equivalently, we can define $\mathcal{E}[M]\to {\cal E}$ as a comma topos:
	\[ \begin{tikzcd}
	\mathcal{E}[M] \arrow[]{d} \arrow[]{r}{u_{M}} & \mathcal{E} \arrow[""{name=U, below}]{d} \\ 
	\Set[{\mathbb T}] \arrow[equal, ""{name=D}]{r}{} &   \Set[\mathbb{T}] \arrow[from=D, to=U, Rightarrow, bend left=20]{}{\lambda_M}
	\end{tikzcd}\]
\end{remark}

\begin{proposition}
	Let be $ M$ be in $ \mathbb{T}[\mathcal{E}]$ and $ g : \mathcal{G} \rightarrow \mathcal{E}$ be a geometric morphism. Then the following square is a pullback:
	\[ 
	\begin{tikzcd}
	{\mathcal{G}[g^*{M}]} \arrow[r] \arrow[d] \arrow[rd, "\lrcorner", very near start, phantom] & {\mathcal{E}[M]} \arrow[d] \\
	\mathcal{G} \arrow[r]                                                             & \mathcal{E}               
	\end{tikzcd} \]
\end{proposition}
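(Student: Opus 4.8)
The plan is to deduce the statement from the bipullback description of the over-topos in Proposition~\ref{propovertoposcomma}, together with the standard pasting law for bipullbacks in the bicategory of Grothendieck toposes. (As throughout Section~4, the square is to be read as a bipullback.)

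First I would record that the $\mathbb T$-model $g^{\ast}M$ in $\cal G$ is classified by the composite $g_{M}\circ g:{\cal G}\to\Set[{\mathbb T}]$: indeed $(g_{M}\circ g)^{\ast}=g^{\ast}\circ g_{M}^{\ast}$ carries the universal model $U_{\mathbb T}$ to $g^{\ast}(g_{M}^{\ast}U_{\mathbb T})\cong g^{\ast}M$, so by the universal property of the classifying topos $g_{g^{\ast}M}\cong g_{M}\circ g$. Applying Proposition~\ref{propovertoposcomma} once to $M$ over $\cal E$ and once to $g^{\ast}M$ over $\cal G$, and using this identification, I obtain a diagram
\[
\begin{tikzcd}
{\cal G}[g^{\ast}M] \arrow[r] \arrow[d, "u_{g^{\ast}M}"'] & {\cal E}[M] \arrow[r] \arrow[d, "u_{M}"'] & \Set[{\mathbb T}]^{2} \arrow[d, "\partial_{1}"] \\
{\cal G} \arrow[r, "g"'] & {\cal E} \arrow[r, "g_{M}"'] & \Set[{\mathbb T}]
\end{tikzcd}
\]
in which the right-hand square is the bipullback of Proposition~\ref{propovertoposcomma} for $M$ and the outer rectangle is the bipullback of Proposition~\ref{propovertoposcomma} for $g^{\ast}M$. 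By the pasting law for bipullbacks the left-hand square is then a bipullback, which is the assertion, provided one checks that its legs are the morphisms appearing in the statement: the left vertical leg is $u_{g^{\ast}M}$ by construction, and the top horizontal leg is the canonical comparison ${\cal G}[g^{\ast}M]\to{\cal E}[M]$ whose identification with the expected map (and the coherence of the accompanying $2$-cell) is a routine unwinding of universal properties.

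A more self-contained route checks the bipullback property directly from Theorem~\ref{thmovertopos}. Let $\cal H$ be any topos; an object of $\textup{\bf Geom}[{\cal H},{\cal G}]\times_{\textup{\bf Geom}[{\cal H},{\cal E}]}\textup{\bf Geom}[{\cal H},{\cal E}[M]]$ consists of geometric morphisms $h:{\cal H}\to{\cal G}$ and $k:{\cal H}\to{\cal E}[M]$ together with an isomorphism $g\circ h\cong u_{M}\circ k$. Applying Theorem~\ref{thmovertopos} for $M$ over $\cal E$ to the $\cal E$-topos $g\circ h$, the pair $(k,\, g\circ h\cong u_{M}\circ k)$ corresponds to an object of ${\mathbb T}[{\cal H}]/(g\circ h)^{\ast}M={\mathbb T}[{\cal H}]/h^{\ast}(g^{\ast}M)$; applying Theorem~\ref{thmovertopos} again, now for $g^{\ast}M$ over $\cal G$, such an object corresponds to a geometric morphism ${\cal H}\to{\cal G}[g^{\ast}M]$ over $\cal G$. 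Composing, one gets an equivalence $\textup{\bf Geom}[{\cal H},{\cal G}]\times_{\textup{\bf Geom}[{\cal H},{\cal E}]}\textup{\bf Geom}[{\cal H},{\cal E}[M]]\simeq\textup{\bf Geom}[{\cal H},{\cal G}[g^{\ast}M]]$, pseudonatural in $\cal H$ by the naturality clause of Theorem~\ref{thmovertopos}, and by the bicategorical Yoneda lemma this exhibits ${\cal G}[g^{\ast}M]$, with its two structural morphisms, as the bipullback. In either route the only genuine difficulty is $2$-categorical bookkeeping: one must track the structural $2$-cells — those of the two squares of Proposition~\ref{propovertoposcomma}, or those of the equivalences of Theorem~\ref{thmovertopos} — carefully enough to conclude that the comparison geometric morphism and its structural $2$-cell are \emph{exactly} the data named in the statement, and not merely an equivalent bipullback cone.
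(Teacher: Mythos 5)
Your first route is exactly the paper's argument: the paper forms the same two-square diagram (through $\Set[\mathbb{T}][U_{\mathbb T}]\simeq \Set[\mathbb{T}]^{2}$), observes that the right square and the outer rectangle are bipullbacks by Proposition~\ref{propovertoposcomma} applied to $M$ and to $g^{\ast}M$ respectively, and concludes by the bipullback pasting lemma. Your additional Yoneda-style verification and the remarks on identifying the legs are extra care beyond what the paper records, but the proof is correct and takes essentially the same approach.
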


\begin{proof}
	This is a simple application of the bipullback lemma: in the diagram below
	\[ 
	\begin{tikzcd}
	{\mathcal{G}[g^*\mathbb{M}]} \arrow[r] \arrow[d, "u_{g^*M}"'] & {\mathcal{E}[M]} \arrow[d, "u_{M}"] \arrow[r] & {\Set[\mathbb{T}][U_\mathbb{T}]} \arrow[d, "u_{U_\mathbb{T}}"] \\
	\mathcal{G} \arrow[r, "g"]                                    & \mathcal{E} \arrow[r, "M"]                    & {\Set[\mathbb{T}]}                                            
	\end{tikzcd} \]
	both the right and outer squares are bipullbacks, hence the left one is so.
\end{proof}

We now turn to the geometric interpretation of the over-topos construction.

\begin{definition}[Theorem C3.6.16 \cite{elephant}]
	A geometric morphism $f$ is said to be \emph{totally connected} if the following equivalent conditions are fulfilled:\begin{itemize}
		\item $f^*$ has cartesian left adjoint $ f_!$;
		\item $ f$ has a right adjoint in the 2-category of Grothendieck toposes;
		\item $ f$ has a terminal section.
\end{itemize}\end{definition}
Morally, a topos is totally connected if it has a terminal point, dually to local toposes who have an initial point. In particular, the universal codomain morphism $ \partial_1$ of a topos is totally connected (as the universal domain morphism $ \partial_0$ is local). Totally connected geometric morphisms are stable under pullback (cf. Lemma C3.6.18 \cite{elephant}), so, by Proposition \ref{propovertoposcomma}, the $\mathbb{T}$-over-topos $ \mathcal{E}[M] \rightarrow \mathcal{E}$ is totally connected; indeed, its terminal point is the identity morphism $1_M$. In particular, all the fibers of this morphisms are also totally connected, while its sections 
\[ \begin{tikzcd} 
& \mathcal{E}[M] \arrow[]{d}{u_{M}} \\ 
\mathcal{E} \arrow[]{ur}{s} \arrow[equal]{r}{} & \mathcal{E}
\end{tikzcd} \]
just are the name of some homomorphism $N \rightarrow M$ in $\mathcal{E}$. \\

Let us now analyse the $\mathbb T$-over-topos construction in the particular case $ \mathbb{T} = \mathbb{O}$, the (one-sorted) theory of objects. This yields a \ac thickening' of the notion of slice topos; indeed, for any object $E$ in a topos $\mathcal{E}$, the ``totally connected component" of $\mathcal{E}$ in $E$ 
\[ \begin{tikzcd}
\mathcal{E}[E] \arrow[]{r}{}  \arrow[phantom, very near start]{rd}{\lrcorner} \arrow[]{d}[swap]{u_{E}} & \Set[\mathbb{O}]^2 \arrow[]{d}{\partial_1} \\ \mathcal{E} \arrow[]{r}{E} & \Set[\mathbb{O}]
\end{tikzcd}\] 
classifies generalized elements $ F \rightarrow E$ of the object $E$, as opposed to the usual étale topos
\[ \begin{tikzcd}
\mathcal{E}/E  \arrow[phantom, very near start]{rd}{\lrcorner} \arrow[]{r}{} \arrow[]{d}[swap]{\widehat{E}} & \Set[\mathbb{O}_\bullet] \arrow[]{d}{} \\ \mathcal{E} \arrow[]{r}{E} & \Set[\mathbb{O}]
\end{tikzcd}\] 
which is constructed from the universal fibration of the classifier of pointed objects over the classifier of objects. Indeed, whilst the \emph{objects} of $\widehat{E}:{\cal E}\slash E \to {\cal E}$ are generalized elements of $E$, that is, morphisms $F \rightarrow E$, $\widehat{E}$ actually classifies over $\cal E$ \emph{global} elements of $E$ in the sense that its sections 
\[ \begin{tikzcd} 
& \mathcal{E}/E \arrow[]{d}{\widehat{E}} \\ 
\mathcal{E} \arrow[]{ur}{s} \arrow[equal]{r}{} & \mathcal{E}
\end{tikzcd} \]
correspond to global elements $ 1 \rightarrow E$. In particular, its fibers at points are discrete (as $ E$ only has a \emph{set} of global elements). 

Note that any point of $\mathcal{E}[E]$ defines by composition with $u_{E}$ a point $ p $ of $\mathcal{E}$ over which it lies. By the universal property of the pullback, it thus yields a section $s$ of the totally connected geometric morphism at the corresponding fiber:
\[
\begin{tikzcd}[sep=large]
& {\Set[p^*E]} \arrow[d, "u_{p^{\ast}(E)}"] \arrow[r] \arrow[rd, "\lrcorner", phantom, very near start] & {\mathcal{E}[E]} \arrow[d, "u_{E}"] \\
\Set \arrow[r, equal] \arrow[ru, "s"] & \Set \arrow[r, "p"']                                                                           & \mathcal{E}                               
\end{tikzcd}\]
Note that $s$ defines a generalized element $X \rightarrow  p^{\ast}(E)$ of the stalk of $E$ at $p$, equivalently a $X$-indexed family of global elements of $p^{\ast}(E)$; this should be compared with the \'{e}tale case, where points of the \'{e}tale topos are mere global elements of the stalk. 

Finally, we observe that the two constructions are related as in the following diagram, where the upper square is a pullback as the bottom and front square are:
\[ 
\begin{tikzcd}
\mathcal{E}/E  \arrow[dd, "\widehat{E}"'] \arrow[rd] \arrow[rr] \arrow[rrrd, "\lrcorner", phantom, very near start] &                                                                                                     & {\Set[\mathbb{O}_\bullet] } \arrow[rd] \arrow[dd] &                                                      \\
& {\mathcal{E}[E]} \arrow[ld, "u_{E}" description] \arrow[r, no head] \arrow[rd, "\lrcorner", phantom, very near start] &                        \arrow[r]                                  & {\Set[\mathbb{O}]^2} \arrow[ld, "\partial_1"] \\
\mathcal{E}  \arrow[rr, "E"']                                                                      &                                                                                                     & {\Set[\mathbb{O}]}                                &                                                     
\end{tikzcd}\]

\vspace{0.4cm}

\textbf{Acknowledgements:} We thank Laurent Lafforgue for his careful reading of a preliminary version of this paper.

\vspace{1cm}

\textsc{Olivia Caramello} 

\vspace{0.2cm}
{\small \textsc{Dipartimento di Scienza e Alta Tecnologia, Universit\`a degli Studi dell'Insubria, via Valleggio 11, 22100 Como, Italy.}\\
	\emph{E-mail address:} \texttt{olivia.caramello@uninsubria.it}}

\vspace{0.2cm}

{\small \textsc{Institut des Hautes \'Etudes Scientifiques, 35 Route de Chartres,
		91440 Bures-sur-Yvette, France.}\\
	\emph{E-mail address:} \texttt{olivia@ihes.fr}}

\vspace{0.6cm}

\textsc{Axel Osmond} 

\vspace{0.2cm}
{\small \textsc{IRIF, 8 Place Aurélie Nemours, 75013 Paris, France.}\\
	\emph{E-mail address:} \texttt{Axel.Osmond@irif.fr}}

\end{document}